\newcommand*\diff{\mathop{}\!\mathrm{d}}
\newtheorem{theorem}{Theorem}
\newtheorem{remark}{Remark}
\newtheorem{corollary}{Corollary}
\pgfplotsset{compat=1.18}
\title{A priori error analysis of multirate time-stepping schemes for two-phase flow problems}
\author{Martyna Soszy\~nska \and Thomas Richter\thanks{Otto-von-Guericke University Magdeburg, \url{thomas.richter@ovgu.de}}}
\begin{document}

\maketitle

\begin{abstract}
  We present a priori error estimates for a multirate time-stepping scheme for coupled differential equations. The discretization is based on Galerkin methods in time using two different time meshes for two parts of the problem. We aim at surface coupled multiphysics problems like two-phase flows. Special focus is on the handling of the interface coupling to guarantee a coercive formulation as key to optimal order error estimates.

  In a sequence of increasing complexity, we begin with the coupling of two ordinary differential equations, coupled heat conduction equation, and finally a coupled Stokes problem. For this we show optimal multi-rate estimates in velocity and a suboptimal result in pressure. 

  The a priori estimates prove that the multirate method decouples the two subproblems exactly. This is the basis for adaptive methods which can choose optimal lattices for the respective subproblems.
\end{abstract}

\section{Introduction}

 Many multiphysics problems such as two-phase flows~\cite{Gross2006}, fluid-structure interactions~\cite{Richter2017,Frei2020}, or
chemically reactive flows~\cite{Zhou2017} have a multiscale character in
time. One prominent  example is found in fluid-structure interactions. Here the time scales of fluid and solid often strongly differ or time-steps should be taken differently for reasons of numerical
stability~\cite{Moerloose2018}. Such multirate time-stepping schemes are often used in applications and the theory is well developed in particular for coupled ODE problems, see~\cite{Gander2013} for an overview.

 Considering partial differential equations multiple temporal scales
can either appear within the same domain or they appear on disjoint
domains that are coupled along a common interface boundary. The first
type of problem is called volume-coupled multiphysics problems and
examples with a temporal multiscale character are chemically reactive
flows or transport processes in subsurface
flows. Prototypical examples for the second kind of problem,
surface-coupled multiphysics problems, are fluid-structure
interactions or multiphase flow problems. The latter kind, although in
a simple linear setting, is discussed in this manuscript.

 In our previous work~\cite{Soszynska2021} we have presented a
variational multirate framework for the space-time discretization of
coupled systems of partial differential equations. Here we introduced
an efficient iteration for the fast solution of coupled time-steps that
is based on the idea of the shooting method. Further, we have
introduced an a posteriori error estimator that also allows us to
adaptively pick optimal time steps for slow and fast problems. In this work, we focus on a priori stability and error estimates for multirate discretizations of coupled temporal multiscale problems. 

\paragraph*{Outline} In the following sections, we develop an a priori error estimate for a multirate time-stepping scheme based on a
discontinuous Galerkin representation of the backward Euler
method.  Since the arguments leading to the error estimate build on each other, we start with very simple ODE problems. Here we introduce the basic notation and explain our handling of the coupling of different time lattices. Section 3 is then devoted to a coupled system of two heat conduction equations with a jump in diffusivity. At its core is the space-time variational formulation of the PDE system and, in particular, how to deal with the boundary conditions at the coupling boundary when deriving the error estimates. This section already contains the central arguments. Finally, a system of two Stokes equations with different viscosities is considered in Section 4. A summary and outlook is given in Section 5.

%------------------------------------------------------------------------

%\input{ode}
\section{Coupling of Ordinary Differential Equations}

 As our first example we chose a system of two
ordinary equations where we look for a solution $\textbf{u} = (u_1,
u_2)^T: I \times I\to \mathds{R} \times \mathds{R} $ with $\textbf{u} \in C^1(\bar I)^2$ to a system
\begin{equation}
  d_t u_1(t) = f_1\big(t,u_1(t),u_2(t)\big),\quad
  d_t u_2(t) = f_2\big(t,u_1(t),u_2(t)\big),\quad \textbf{u}(0)=\textbf{0}.
\label{continuous_ode}
\end{equation}
We assume that the function $\textbf{f}=(f_1, f_2)$ is Lipschitz
continuous. Next,
we proceed to a semi-discrete formulation. The two problems are
coupled across the macro mesh over the time interval $I=[0, T]$

\begin{equation*}
    0 = t^0 < t^1 ... <t^N = T, \hspace*{0.2 cm} k^n = t^n - t^{n - 1}, \hspace*{0.2 cm} I^n = (t^{n -1}, t^n].
\end{equation*}
Every time subinterval $I^n$ has its own time interval partitioning corresponding to each of the subproblems
\begin{equation}
  t^{n-1}=t^{n,0}_j<t^{n,1}_j<\dots<t^{n,N_j^n}_j=t^n,\; k^{n}_j = t^{n,m}_j-t^{n,m-1}_j\quad j=1,2.
\end{equation}
as well as 
\begin{equation*}
    k_1 \coloneqq \max_{n} k_1^{n}, \hspace*{0.5 cm} k_2 \coloneqq \max_{n} k_2^{n}, \hspace*{0.5 cm} k \coloneqq \max \left\{ k_1, k_2\right\}.
\end{equation*}
The micro triangulations are uniform on each $I^n$, which is, however, just for simplicity of notation. We denote the time mesh by ${\cal I}_k$. 
An example of this kind of time mesh is shown in Figure \ref{time-partitioning}. On ${\cal I}_k$ we define the space of piecewise continuous functions
\[
C_j({\cal I}_k) = \{ \phi\in L^2(I)\big|\, u|_{I_j^{n,m}}\in C^1(I_j^{n,m})\}\; j=1,2,\quad C({\cal I}_k) = C_1({\cal I}_k)\times C_2({\cal I}_k).
\]

 We assume that in this time partitioning we introduce micro time-steps only when necessary, see Figure \ref{time-partitioning-refined}. Besides, we assume that these time meshes are a result of an adaptive time-stepping procedure where time-steps are refined only in the middle. Based on these two assumptions for every macro time-step $I^n$ we either have $N_1^n = 1$ or $N_2^n = 1$. We choose discrete solutions $\textbf{u}^k\in X^k=X^k_1\times X^k_2$ as piecewise constant functions defined over each of the meshes
\begin{equation}\label{spaces}
    X_{j}^k ={} \big\{ u \in L^2(\bar{I})\Big|\; u|_{I^{n,m}_j} \in \mathbb{R} 
      \text{ for all }
    I^{n,m}_j \subset I  \text{ and } u(0)=0 \big\},\quad j=1,2.
\end{equation}
To further specify the time-stepping scheme, we define the operator $\textbf{i}^k = (i_1^k, i_2^k)^T$  with $\textbf{i}^k: C({\cal I}_k)\to X^k$.
by
\begin{equation}
    i_j^k u \big|_{I^{n, m}_j} \coloneqq u(t^{n,m}_j),\quad j=1,2.
\label{ie_operator}
\end{equation}
At initial time, we set $(\textbf{i}^k\textbf{u})(0) = \textbf{u}(0)$. This choice of projection operators indicates the implicit Euler method. We follow this route and introduce a finite difference quotient $d_t^k$ typical for this time-stepping scheme
\begin{equation}
  d_t^k u^k_j \Big|_{I_j^{n,m}} = \frac{u^k_j(t_j^{n,m}) - u_j^k(t_j^{n, m-1})}{k_j^{n,m}},\quad j=1,2.\label{time_derivative}
\end{equation}

  \begin{figure}[t]
    \centering
    \begin{tikzpicture}[scale = 1.2]
        
        \draw[very thick] (0.0, 1.0) -- (8.0, 1);
        \draw[very thick] (0.0, 1.75) -- (0.0, 0.25);
        \draw[very thick] (4.0, 1.75) -- (4.0, 0.25);
        \draw[very thick] (8.0, 1.75) -- (8.0, 0.25);
        \draw[very thick] (6.0, 1.0) -- (6.0, 0.5);
        \draw[very thick] (1.0, 1.0) -- (1.0, 1.5);
        \draw[very thick] (2.0, 1.0) -- (2.0, 1.5);
        \draw[very thick] (3.0, 1.0) -- (3.0, 1.5);
        \draw[<->, dashed] (4.1, 0.75) -- (5.9, 0.75); 
        \node at (5.1, 0.4) {$k^{n, 1}_2$};
        \node at (7.1, 0.4) {$k^{n, 2}_2$};
        \draw[<->, dashed] (6.1, 0.75) -- (7.9, 0.75); 
        \draw[<->, dashed] (0.1, 0.0) -- (3.9, 0.0); 
        \draw[<->, dashed] (4.1, 0.0) -- (7.9, 0.0); 
        \node at (2.1, -0.4) {$k^{n - 1}$};
        \node at (6.0, -0.4) {$k^{n}$};
        \node at (0.0, 2.0) {$t^{n - 2}$};
        \node at (4.0, 2.0) {$t^{n - 1}$};
        \node at (8.0, 2.0) {$t^{n}$};
        
        \node at (1.1, 1.75) {$t^{n - 1, 1}_1$};
        \node at (2.1, 1.75) {$t^{n - 1,2}_1$};
        \node at (3.1, 1.75) {$t^{n-1, 3}_1$};
    
    \end{tikzpicture}
    \caption{We show a snapshot of time partitioning with two macro time-steps. In the first macro time-step, we introduce four micro time-steps in the first subproblem. In the second one, we have two micro time-steps in the second subproblem.}
    \label{time-partitioning}
  \end{figure}
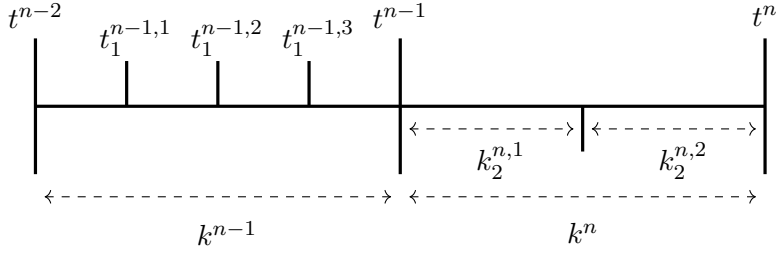

  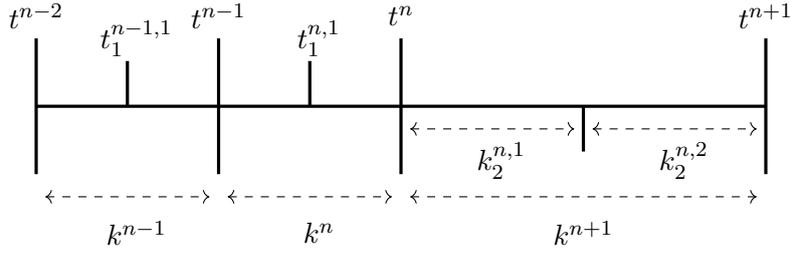
\begin{figure}[t]
    \centering
    \begin{tikzpicture}[scale = 1.2]
        
        \draw[very thick] (0.0, 1.0) -- (8.0, 1);
        \draw[very thick] (0.0, 1.75) -- (0.0, 0.25);
        \draw[very thick] (4.0, 1.75) -- (4.0, 0.25);
        \draw[very thick] (8.0, 1.75) -- (8.0, 0.25);
        \draw[very thick] (6.0, 1.0) -- (6.0, 0.5);
        \draw[very thick] (1.0, 1.0) -- (1.0, 1.5);
        \draw[very thick] (2.0, 1.75) -- (2.0, 0.25);
        \draw[very thick] (3.0, 1.0) -- (3.0, 1.5);
        \draw[<->, dashed] (4.1, 0.75) -- (5.9, 0.75); 
        \node at (5.1, 0.4) {$k^{n, 1}_2$};
        \node at (7.1, 0.4) {$k^{n, 2}_2$};
        \draw[<->, dashed] (6.1, 0.75) -- (7.9, 0.75); 
        \draw[<->, dashed] (0.1, 0.0) -- (1.9, 0.0); 
        \draw[<->, dashed] (2.1, 0.0) -- (3.9, 0.0); 
        \draw[<->, dashed] (4.1, 0.0) -- (7.9, 0.0); 
        \node at (1.1, -0.4) {$k^{n - 1}$};
        \node at (3.1, -0.4) {$k^{n}$};
        \node at (6.0, -0.4) {$k^{n +1}$};
        \node at (0.0, 2.0) {$t^{n - 2}$};
        \node at (2.0, 2.0) {$t^{n - 1}$};
        \node at (4.0, 2.0) {$t^{n}$};
        \node at (8.0, 2.0) {$t^{n+1}$};
        
        \node at (1.1, 1.75) {$t^{n - 1, 1}_1$};
        %\node at (2.1, 1.75) {$t_{n - 1,2}^1$};
        \node at (3.1, 1.75) {$t^{n, 1}_1$};
    
    \end{tikzpicture}
    \caption{Here we show the refinement of the time mesh presented in Figure \ref{time-partitioning} by splitting $I^n$ in the second subproblem. Since the time mesh corresponding to the first subproblem was already split at this point (point $t^{n - 1,2}_1$ in Figure~\ref{time-partitioning}), we can introduce a new macro time-step.}
    \label{time-partitioning-refined}
  \end{figure}
 Since we are interested in coupled problems, we also need an apparatus to deal with the transfer of solutions between the non-matching time meshes. To resolve this issue, we additionally introduce an operator $\textbf{I}^k = (I_1^k, I_2^k)^T$  with $\textbf{I}^k: C({\cal I}) \to X^k$
given by an average over each partitioning, where on every interval $I^{n, m}_1$ and $I^{n, m}_2$ it holds
\begin{equation}
    I_j^k u \big|_{I^{n, m}_j} \coloneqq \frac{1}{k_j^{n, m}} \int_{I^{n, m}_j} u \diff t, \quad j=1,2.
\label{average_operator}
\end{equation}
We will also use a similar operator $\bar{I}^k$ which is defined accordingly on the coarse time mesh consisting of the macro time-steps
\begin{equation}
    \bar{I}^k u \big|_{I^{n}} \coloneqq \frac{1}{k^{n}} \int_{I^{n}} u \diff t.
\label{average_operator_global}
\end{equation}
At the initial point, we impose $(\textbf{I}^k\textbf{u})(0) = \textbf{u}(0)$ and $(\bar{I}^k \textbf{u})(0) = \textbf{u}(0)$. Further information is given in Figure \ref{interface_operator_transfer}. The key property of the operator $\textbf{I}^k$ is that its error has average zero over each macro time-step
\begin{equation*}
    \int_{I^n} \left(\textbf{u} - \textbf{I}^k \textbf{u} \right) \diff t = 0.
\end{equation*}
Moreover, for any $u_1^k \in X^k_1$ and $u_2^k \in X^k_2$ we similarly have 
\begin{equation}
    \int_{I^n} \left(u_1^k - I_2^k u_1^k \right) \diff t =\int_{I^n} \left(u_2^k - I_1^k u_2^k \right) \diff t = 0.
\label{average_zero}
\end{equation}
This identity directly follows from the hierarchical structure in the discretization, where we know that nodes $t^{n}$ and $t^{n-1}$ belong to both of the discretizations. 

\begin{figure}[t]
\centering
\begin{tikzpicture}[scale = 0.9]
\draw[->, thick] (-0.5, 0) -- (4, 0);
\draw[->, thick] (0, -0.5) -- (0, 4);
\draw[pattern=north west lines, dashed] (0.5,0.0) rectangle (3.5, 2);
\draw (-0.15, 2.0) -- (0.15, 2.0);
\node at (-0.8, 2.0) {$u_1^k(t^n)$};
\node at (0.7, -0.5) {$t^{n - 1}$};
\node at (3.5, -0.5) {$t^{n}$};
\draw[line width = 0.04 cm] (0.5, 2) -- (3.5, 2);
\draw[fill=black] (3.5,2) circle (2 pt);

\node at (4.8, 1.5) {\Large{$\longmapsto$}};
\node at (4.8, 2.0) {\large{$I_2^k$}};

\draw (7.35, 2.0) -- (7.65, 2.0);
\node at (6.4, 2.0) {$(I^k_2 u_1^k)(t^n)$};
\draw[->, thick] (7.0, 0) -- (11.5, 0);
\draw[->, thick] (7.5, -0.5) -- (7.5, 4);
\draw[pattern=north west lines, dashed] (8.0,0) rectangle (9.5, 2);
\draw[pattern=north west lines, dashed] (9.5, 2) -- (11.0, 2.0) -- (11.0, 0.0) -- (9.5, 0.0);
\draw[fill=black] (11.0,2) circle (2 pt);
\draw[fill=black] (9.5,2) circle (2 pt);
\draw[line width = 0.04 cm] (8.0,2) -- (11.0,2);
\node at (8.1, -0.5) {$t^{n - 1}$};
%\node at (9.6, -0.5) {$t_{n, 1}^2$};
\node at (11.0, -0.5) {$t^{n}$};
\end{tikzpicture}

\begin{tikzpicture}[scale = 0.9]
\draw[->, thick] (-0.5, 0) -- (4, 0);
\draw[->, thick] (0, -0.5) -- (0, 4);
\draw[pattern=north west lines, dashed] (0.5, 0.0) -- (2.0, 0.0) -- (2.0, 3.0) -- (0.5, 3.0) -- (0.5, 0.0);
\draw (-0.15, 3.0) -- (0.15, 3.0);
\node at (-0.8, 1.0) {$u_1^k(t^n)$};
\draw (-0.15, 1.0) -- (0.15, 1.0);
%\node at (-1.0, 3.0) {$u_1(t_{n - 1})$};
\node at (0.7, -0.5) {$t^{n - 1}$};
%\node at (2.0, -0.5) {$t_{n, 1}^1$};
\node at (3.5, -0.5) {$t^{n}$};
\draw[pattern=north west lines, dashed] (2.0, 1.0) -- (3.5, 1.0) -- (3.5, 0.0) -- (2.0, 0.0);
\draw[line width = 0.04 cm] (2.0, 1) -- (3.5, 1);
\draw[fill=black] (3.5,1) circle (2 pt);
\draw[fill=black] (2,3) circle (2 pt);
\draw[line width = 0.04 cm] (2, 3) -- (0.5, 3);

\node at (4.8, 1.5) {\Large{$\longmapsto$}};
\node at (4.8, 2.0) {\large{$I_2^k$}};

\draw (7.35, 2.0) -- (7.65, 2.0);
\node at (6.4, 2.0) {$(I^k_2 u_1^k)(t^n)$};
\draw[->, thick] (7.0, 0) -- (11.5, 0);
\draw[->, thick] (7.5, -0.5) -- (7.5, 4);
\draw[pattern=north west lines, dashed] (8.0,0) rectangle (11.0, 2);
\draw[fill=black] (11.0,2) circle (2 pt);
\draw[line width = 0.04 cm] (8.0,2) -- (11.0,2);
\node at (8.1, -0.5) {$t^{n - 1}$};
\node at (11.0, -0.5) {$t^{n}$};
\end{tikzpicture}
\caption{We present two examples of the transformation given by the interface projection operator $\textbf{I}^k$. In the top sketch, one macro time-step is split into two smaller micro time-steps and $(I^k_2 u_1^k)(t^{n, 1}_2) = (I^k_2 u_1^k)(t^{n}) = u_1^k(t^{n})$. In the bottom sketch, two smaller micro time-steps are merged together with $(I^k_2 u_1^k)(t^{n}) = \frac{k^{n, 1}_1}{k^{n}} u_1^k(t^{n, 1}_1) + \frac{k^{n, 2}_1}{k^{n}} u_1^k(t^{n})$.}
\label{interface_operator_transfer}
\end{figure}
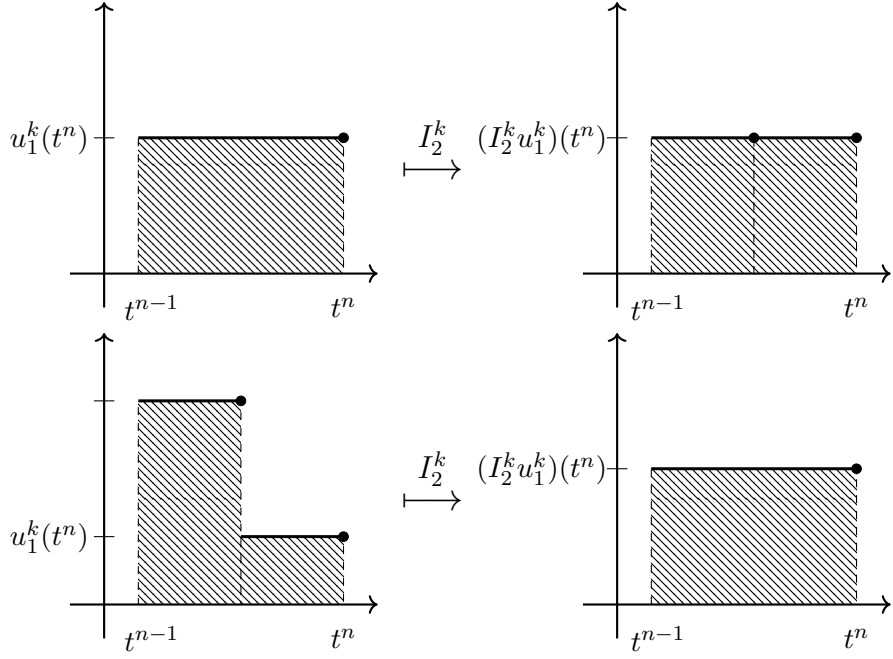

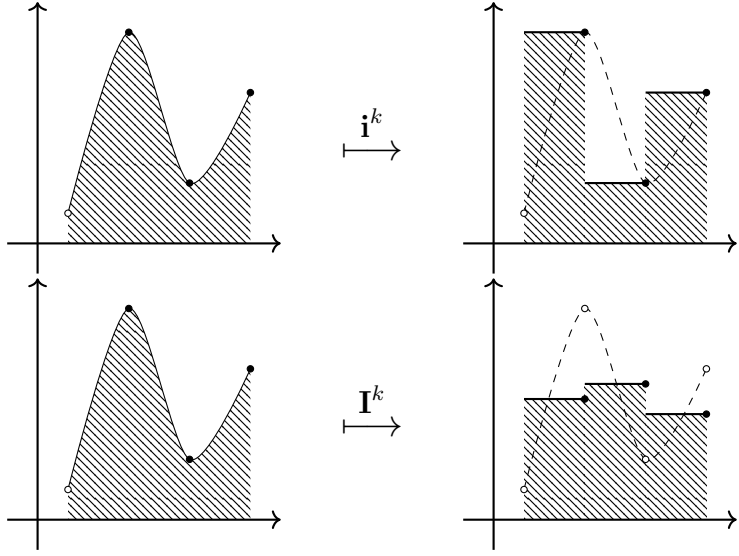
\begin{figure}
    \centering
\begin{tikzpicture}[scale = 0.8]
    \draw[->, thick] (-0.5, 0) -- (4, 0);
    \draw[->, thick] (0, -0.5) -- (0, 4);
    
    \draw plot [smooth] coordinates {(0.5,0.5)  (1.5,3.5)  (2.5,1)  (3.5,2.5)}; 
    \fill[pattern=north west lines, dashed] plot [smooth] coordinates {(0.5,0.5)  (1.5,3.5)  (2.5,1)  (3.5,2.5)} -- (3.5,0.0) -- (0.5,0.0); 

    %Nodes:
    \node (a0) at (0.5,0.5) {};
    \draw[fill = white] (a0) circle [radius=1.5pt];
    \node (a1) at (1.5,3.5) {};
    \draw[fill] (a1) circle [radius=1.5pt];
    \node (a2) at (2.5,1) {};
    \draw[fill] (a2) circle [radius=1.5pt];
    \node (a3) at (3.5,2.5) {};
    \draw[fill] (a3) circle [radius=1.5pt];       
    
    \node at (5.5, 1.5) {\Large{$\longmapsto$}};
    \node at (5.5, 2.0) {\large{$\textbf{i}^k$}};
    
    \draw[->, thick] (7.0, 0) -- (11.5, 0);
    \draw[->, thick] (7.5, -0.5) -- (7.5, 4);
    
    \draw[dashed] plot [smooth] coordinates {(8.0,0.5)  (9.0,3.5)  (10.0,1)  (11.0,2.5)};
    %\fill[pattern=north west lines, dashed] plot [smooth] coordinates { (8.0,0.5) (9.0,3.5)  (10.0,1)  (11.0,2.5)} -- (11.0,0.0) -- (8.0,0.0);
    
    \draw[thick] (8.0, 3.5) -- (9.0, 3.5);
    \fill[pattern=north west lines, dashed] (8.0, 3.5) -- (9.0, 3.5) -- (9.0, 0.0) -- (8.0, 0.0) -- (8.0, 3.5);
    \draw[thick] (9.0, 1.0) -- (10.0, 1.0);
    \fill[pattern=north west lines, dashed] (9.0, 1.0) -- (10.0, 1.0) -- (10.0, 0.0) -- (9.0, 0.0) -- (9.0, 1.0);
    \draw[thick] (10.0, 2.5) -- (11.0, 2.5);
    \fill[pattern=north west lines, dashed] (10.0, 2.5) -- (11.0, 2.5) -- (11.0, 0.0) -- (10.0, 0.0) -- (10.0, 2.5);

    %Nodes:
    \node (b0) at (8.0,0.5) {};
    \draw[fill = white] (b0) circle [radius=1.5pt];
    \node (b1) at (9.0,3.5) {};
    \draw[fill] (b1) circle [radius=1.5pt];
    \node (b2) at (10.0,1) {};
    \draw[fill] (b2) circle [radius=1.5pt];
    \node (b3) at (11.0,2.5) {};
    \draw[fill] (b3) circle [radius=1.5pt];

\end{tikzpicture}

\begin{tikzpicture}[scale = 0.8]
    \draw[->, thick] (-0.5, 0) -- (4, 0);
    \draw[->, thick] (0, -0.5) -- (0, 4);
    
    \draw plot [smooth] coordinates {(0.5,0.5)  (1.5,3.5)  (2.5,1)  (3.5,2.5)}; 
    \fill[pattern=north west lines, dashed] plot [smooth] coordinates {(0.5,0.5)  (1.5,3.5)  (2.5,1)  (3.5,2.5)} -- (3.5,0.0) -- (0.5,0.0); 

    %Nodes:
    \node (a0) at (0.5,0.5) {};
    \draw[fill = white] (a0) circle [radius=1.5pt];
    \node (a1) at (1.5,3.5) {};
    \draw[fill] (a1) circle [radius=1.5pt];
    \node (a2) at (2.5,1) {};
    \draw[fill] (a2) circle [radius=1.5pt];
    \node (a3) at (3.5,2.5) {};
    \draw[fill] (a3) circle [radius=1.5pt];       
    
    \node at (5.5, 1.5) {\Large{$\longmapsto$}};
    \node at (5.5, 2.0) {\large{$\textbf{I}^k$}};
    
    \draw[->, thick] (7.0, 0) -- (11.5, 0);
    \draw[->, thick] (7.5, -0.5) -- (7.5, 4);
    
    \draw[dashed] plot [smooth] coordinates {(8.0,0.5)  (9.0,3.5)  (10.0,1)  (11.0,2.5)};
    %\fill[pattern=north west lines, dashed] plot [smooth] coordinates { (8.0,0.5) (9.0,3.5)  (10.0,1)  (11.0,2.5)} -- (11.0,0.0) -- (8.0,0.0);
    
    \draw[thick] (8.0, 2.0) -- (9.0, 2.0);
    \fill[pattern=north west lines, dashed] (8.0, 2.0) -- (9.0, 2.0) -- (9.0, 0.0) -- (8.0, 0.0) -- (8.0, 2.0);
    \draw[thick] (9.0, 2.25) -- (10.0, 2.25);
    \fill[pattern=north west lines, dashed] (9.0, 2.25) -- (10.0, 2.25) -- (10.0, 0.0) -- (9.0, 0.0) -- (9.0, 2.25);
    \draw[thick] (10.0, 1.75) -- (11.0, 1.75);
    \fill[pattern=north west lines, dashed] (10.0, 1.75) -- (11.0, 1.75) -- (11.0, 0.0) -- (10.0, 0.0) -- (10.0, 1.75);

    %Nodes:
    \node (b0) at (8.0,0.5) {};
    \draw[fill = white] (b0) circle [radius=1.5pt];
    \node (b1) at (9.0,3.5) {};
    \draw[fill = white] (b1) circle [radius=1.5pt];
    \node (b2) at (10.0,1) {};
    \draw[fill = white] (b2) circle [radius=1.5pt];
    \node (b3) at (11.0,2.5) {};
    \draw[fill = white] (b3) circle [radius=1.5pt];  
    
    \node (c1) at (9.0,2.0) {};
    \draw[fill] (c1) circle [radius=1.5pt];
    \node (c2) at (10.0,2.25) {};
    \draw[fill] (c2) circle [radius=1.5pt];
    \node (c3) at (11.0, 1.75) {};
    \draw[fill] (c3) circle [radius=1.5pt];

\end{tikzpicture}

    \caption{An example showing a difference between projection operators $\textbf{i}^k$ and $\textbf{I}^k$. In the top figure, we can see the projection given by the $\textbf{i}^k$ operator and in the bottom one, we instead have a look at the $\textbf{I}^k$ operator.}
    \label{projection}
\end{figure}

% To define transfer of the interface values across these two non-matching time meshes, we need to introduce additional operators. By $I_{\Gamma}^2$ we denote an operator which transfers the interface values from the time mesh of $\Omega_1$ to the time mesh in $\Omega_2$ and by $I_{\Gamma}^1$ the operator which goes in the opposite direction. Without loss of generality, here we will define $I_{\Gamma}^2$. With our adaptive procedure in mind, that is assuming that we refine time-steps in the middle and avoid unnecessary subcycling, we have that for every $n=1...N$ either $N_n^1 = 1$ or $N_n^2 = 1$. If $N_n^1 = 1$, then on the interface we prescribe 
%$$(I_{\Gamma}^2 u_1)(t_{n, m}^2) \coloneqq u_1(t_{n + 1})\hspace*{0.2 cm}\textnormal{ for all } \hspace*{0.2 cm} m = 1,...,N_n^2.$$
%Otherwise, we take the average
%$$(I_{\Gamma}^2 u_1)(t_{n + 1}) \coloneqq \sum_{m = 1}^{N_n^1} \frac{k_{n, m}^1}{k_{n + 1}} u_1(t_{n, m}^1) $$
%This procedure is presented in Figure \ref{interface_operator}. The advantage of these operators is that we have
%\begin{equation}
%\label{zero_mean}
%    \int_{I_n} \big\langle u_2  - I_{\Gamma}^1 u_2 \big\rangle_{\Gamma} \diff t = 0, \hspace*{0.5 cm}
%    \int_{I_n} \big\langle \partial_{\textbf{n}_1} u_1 - I_{\Gamma}^2(\partial_{\textbf{n}_1} u_1) \big\rangle_{\Gamma} \diff t = 0.
%\end{equation}

 We would like to reiterate the difference between operators $\textbf{i}^k$ and $\textbf{I}^k$. The former is our primary operator used in the implicit Euler time-stepping scheme and will be a part of a projection error that we will estimate in each of the following proofs. The latter is used exclusively to transfer the solutions between different time meshes. The difference between the two is further illustrated in Figure \ref{projection}. Finally, both $\textbf{i}^k$ and $\textbf{I}^k$ are properly defined projection operators and therefore, for any $\textbf{u}^k \in X^k$, we have

\begin{equation*}
  \textbf{i}^k \textbf{u}^k = \textbf{I}^k \textbf{u}^k = \textbf{u}^k.
\end{equation*}

%\end{figure}

Given this preliminary information, we can define the semi-discrete problem
\begin{equation}
  u^k\in C({\cal I}):\quad 
  d_t^k u_1^k = f_1 (i_1^k t, u_1^k, I_1^k u_2^k), \quad
  d_t^k u_2^k  = f_2 (i_2^k t, I_2^k u_1^k, u_2^k). 
  \label{semi-discrete_ode}
\end{equation}
We will now prove a stability estimate of this semi-discrete system using Gronwall's lemma.

\begin{theorem}
  Let $\textbf{u}$ be a continuous solution to~(\ref{continuous_ode}) and $\textbf{u}^k \in X^k$ its discrete counterpart and a solution to~(\ref{semi-discrete_ode}).
  Further, let us assume that $\textbf{f} \in C^1(\bar{I})^2$ with Lipschitz constants $L_1$ and $L_2$, respectively.  If we further assume that $(L_1 + L_2)( k^N_1 + k^N_2)\leq \frac{1}{2}$, where $k^N_1 \coloneqq k^{N, N^N_1}_1$  and $k^N_2 \coloneqq k^{N, N^N_2}_2$ that is the sizes of the last time-steps in each of the timelines, then the following estimate holds
  \begin{equation*}
    \begin{aligned}
      & \big| \big| e_1^k(t^N)\big|\big| + \big| \big| e_2^k(t^N)\big|\big| \leq e^{2T(L_1 + L_2)} \left(2\|\tau_1^k\| + 2\|\tau_2^k\| \right),
    \end{aligned}
  \end{equation*}
  with the truncation errors (for $j=1,2$ and using the notation $\hat j=3-j$)
  \begin{multline}\label{tau:ode}
    \|\tau_j^k\|  \leq  \sum_{n = 1}^{N}\sum_{m = 1}^{N^n_j}  \Bigg\{  \frac{1}{2}(k^{n, m}_j)^2\max_{t \in I}\left| \left| d_t f_j(t, u_1, u_2) \right| \right| + L_j k^n k^{n, m}_j \max_{t \in I} \left| \left| f_j(t, u_1, u_2) \right| \right| \\
    + L_{\hat j} (k^{n, m}_j)^2  \max_{t \in I}\left|\left|f_{ j}(t, u_1, u_2)\right|\right|\Bigg\} 
  \end{multline}
  and where the errors $\textbf{e}^k = (e_1^k, e_2^k)^T$ are defined as
  \begin{flalign*}
    e_j^k \coloneqq u_j^k - i_j^k u_j, \quad j=1,2.
  \end{flalign*}
  \label{theorem_ode}
\end{theorem}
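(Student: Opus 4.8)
\section*{Proof proposal}

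The plan is to run a consistency-plus-stability (Lax-type) argument adapted to the two non-matching time lattices. First I would define the truncation errors $\tau_j^k$ as the residual obtained when the interpolated exact solution is inserted into the discrete equation~(\ref{semi-discrete_ode}); concretely, on each micro interval $I_j^{n,m}$ I set $\tau_j^k := d_t^k(i_j^k u_j) - f_j(i_j^k t,\, i_j^k u_j,\, I_j^k i_{\hat j}^k u_{\hat j})$, so that the numerical coupling term $I_j^k u_{\hat j}^k$ appearing in the scheme is mirrored by $I_j^k i_{\hat j}^k u_{\hat j}$. Subtracting from~(\ref{semi-discrete_ode}) gives the error equation $d_t^k e_j^k = \big(f_j(i_j^k t, u_j^k, I_j^k u_{\hat j}^k) - f_j(i_j^k t, i_j^k u_j, I_j^k i_{\hat j}^k u_{\hat j})\big) - \tau_j^k$. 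The point of this particular choice is that the first bracket is now purely a difference of $f_j$ evaluated at numerical versus interpolated arguments, so Lipschitz continuity bounds it by $L_j\big(|e_j^k| + |I_j^k e_{\hat j}^k|\big)$ at the node $t_j^{n,m}$, while every genuine consistency contribution has been pushed into $\tau_j^k$.

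Second, I would estimate $\tau_j^k$ by Taylor expansion around the right endpoint $t_j^{n,m}$. Using $d_t u_j = f_j$ and adding and subtracting $f_j(t_j^{n,m}, u_j(t_j^{n,m}), u_{\hat j}(t_j^{n,m})) = d_t u_j(t_j^{n,m})$, the truncation error splits into a pure backward-Euler part and a coupling part. The backward-Euler part is $\tfrac{1}{k_j^{n,m}}\big(u_j(t_j^{n,m}) - u_j(t_j^{n,m-1})\big) - d_t u_j(t_j^{n,m})$, which Taylor expansion bounds by $\tfrac12 k_j^{n,m}\max|d_{tt}u_j| = \tfrac12 k_j^{n,m}\max|d_t f_j|$; after the natural $L^1$-in-time weighting by $k_j^{n,m}$ this yields the first term of~(\ref{tau:ode}). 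The coupling part is $L_j\,|u_{\hat j}(t_j^{n,m}) - (I_j^k i_{\hat j}^k u_{\hat j})(t_j^{n,m})|$, i.e. the distance between the exact coupling value and its averaged transfer onto mesh $j$. Here I would invoke the structural assumption that on each macro step either $N_1^n=1$ or $N_2^n=1$: whichever mesh is unrefined carries $i_{\hat j}^k u_{\hat j}$ as a macro-constant, or an average over the whole macro step, so the transfer can resolve the coupling variable only up to macro resolution; the mean-value theorem with $d_t u_{\hat j}=f_{\hat j}$ then produces factors $k^n$ and $k_j^{n,m}$, and the average-zero identity~(\ref{average_zero}) is what keeps this contribution from degrading to first order. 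The delicate bookkeeping of this step is precisely what separates out the remaining two terms of~(\ref{tau:ode}).

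Third, with the error equation in hand I would close the estimate by a discrete Gronwall argument. Reading the error equation at the micro node $t_j^{n,m}$ as $|e_j^k(t_j^{n,m})| \le |e_j^k(t_j^{n,m-1})| + k_j^{n,m}L_j\big(|e_j^k(t_j^{n,m})| + |I_j^k e_{\hat j}^k(t_j^{n,m})|\big) + k_j^{n,m}|\tau_j^k|$, I would first absorb the implicit self-term $k_j^{n,m}L_j|e_j^k(t_j^{n,m})|$ into the left-hand side; it is exactly to control this absorption simultaneously for both subproblems at the final step that the hypothesis $(L_1+L_2)(k_1^N+k_2^N)\le\tfrac12$ is used, producing the constant $2$ in front of the truncation errors. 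The transferred error $I_j^k e_{\hat j}^k$ is then reduced to nodal values of $e_{\hat j}^k$ by the same unrefined-subproblem structure, so that $I_j^k e_{\hat j}^k$ is either $e_{\hat j}^k(t^n)$ or a convex average of $e_{\hat j}^k$ over $I^n$ bounded by its nodal values. Adding the two inequalities, telescoping in $n$ and applying the discrete Gronwall lemma to the combined quantity $|e_1^k|+|e_2^k|$ yields the exponential factor $e^{2T(L_1+L_2)}$ and the stated bound.

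The main obstacle is the coupling across the two lattices, which appears twice. On the consistency side I must verify that averaging the slow variable onto the fast mesh, and vice versa, does not spoil the approximation order: this is exactly where the average-zero property~(\ref{average_zero}) and the ``one subproblem unrefined per macro step'' assumption are indispensable, and it is where the careful split into the two coupling terms of~(\ref{tau:ode}) arises. On the stability side the transferred error $I_j^k e_{\hat j}^k$ must be reconciled with genuine nodal errors of the other subproblem so that a clean coupled Gronwall recursion can be formed, which is also where the factor involving $L_{\hat j}$ enters. The backward-Euler consistency and the Gronwall summation are otherwise routine; essentially all the care lies in the interface transfer operators.
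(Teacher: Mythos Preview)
Your overall architecture is the same as the paper's: derive an error recursion from~(\ref{semi-discrete_ode}), use Lipschitz continuity for stability, estimate a truncation error by Taylor expansion plus a coupling term controlled by the macro-step structure, and close with a discrete Gronwall argument using the last-step hypothesis. The one genuine difference is the choice of pivot in the coupling argument. You insert $I_j^k i_{\hat j}^k u_{\hat j}$, so that the Lipschitz part is exactly $L_j(|e_j^k|+|I_j^k e_{\hat j}^k|)$; the paper instead inserts the macro average $\bar I^k u_{\hat j}$ of the exact solution, so its Lipschitz part is $L_j|e_j^k|+L_j\|I_j^k u_{\hat j}^k-\bar I^k u_{\hat j}\|$, and the second piece carries not only the nodal errors $e_{\hat j}^k$ but also an extra consistency contribution $L_j(k_{\hat j}^{n,m})^2\max\|f_{\hat j}\|$. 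That extra piece, after swapping the roles of $j$ and $\hat j$, is exactly the third summand in~(\ref{tau:ode}); with your pivot it simply does not appear, and you obtain a slightly sharper bound (only the first two summands), which of course still implies the stated inequality.

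Two small inaccuracies worth flagging. First, you invoke the average-zero identity~(\ref{average_zero}) for the ODE consistency step, but it is not actually needed here; what does the work is the ``one subproblem unrefined per macro step'' structure together with the mean-value theorem applied to $u_{\hat j}$. Second, you say the factor $L_{\hat j}$ ``enters on the stability side'': with your pivot the stability side only ever produces $L_j$ in the $j$-th equation, and the $L_{\hat j}$ term of~(\ref{tau:ode}) is precisely the artifact of the paper's different pivot that your approach avoids. Neither point affects the validity of your argument.
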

\begin{proof} Since the analysis of both the errors $e_1^k$ and $e_2^k$ is analogous, in this proof we will only estimate $e_1^k$. Using both the continuous (\ref{continuous_ode}) as well as the discrete (\ref{semi-discrete_ode}) formulation we have

\begin{flalign*}
e_1^k(t^{n,m}_1) = e_1^k(t^{n,m - 1}_1) + \int_{I^{n, m}_1} \Big\{ & f_1(t^{n,m}_1, u_1^k, I_1^k u_2^k)  - f_1(t^{n,m}_1,  u_1(t^{n,m}_1), \bar{I}^k u_2) \Big\} \diff t - \tau^{n,m}_{1,k}, 
\end{flalign*}
where
\begin{flalign*}
\tau^{n,m}_{1,k} \coloneqq & u_1(t^{n,m}_1) - u_1(t^{n, m-1}_1)  - \int_{I^{n,m}_1} f_1(t^{n,m}_1, u_1(t^{n,m}_1), \bar{I}^k u_2)  \diff t.
\end{flalign*}
We sum up the values of the errors over the whole time interval 
\begin{multline}
  e_1^k(t^N) =   \sum_{n = 1}^{N} \sum_{m = 1}^{N^n_1} \int_{I^{n,m}_1} \Big\{  f_1(t^{n,m}_1, u_1^k, I_1^k u_2^k)  - f_1(t^{n,m}_1,  u_1(t^{n,m}_1), \bar{I}^k u_2) \Big\} \diff t\\
  - \sum_{n = 1}^{N}\sum_{m = 1}^{N^n_1} \int_{I^{n,m}_1} \tau^{n,m}_{1,k} \diff t.
 \label{error_equation}
\end{multline}
Here, we used $e_1^k(0) = 0$. We will use the notation
$$\tau^{k}_1 \coloneqq \sum_{n = 1}^{N}\sum_{m = 1}^{N^n_1} \int_{I^{n,m}_1} \tau^{n,m}_{1,k} \diff t .$$
We can apply the triangle inequality to equation (\ref{error_equation}) and use  Lipschitz continuity of the function $f_1$
\begin{equation}
    \begin{aligned}
      \|e_1^k(t^N)\| &\leq  \sum_{n = 1}^{N} \sum_{m = 1}^{N^n_1} k^{n,m}_1 L_1\|e_1^k(t^{n,m}_1)\| + \sum_{n = 1}^{N} \sum_{m = 1}^{N^n_1} \int_{I^{n,m}_1} L_1 \|I_1^k u_2^k - \bar{I}^k u_2 \| \diff t - \tau_1^k.
    \end{aligned}
    \label{gronwall_equation}
\end{equation}
We proceed with the estimation of the term $\sum_{m = 1}^{N^n_1} \int_{I^{n,m}_1} L_1 \|I_1^k u_2^k - \bar{I}^k u_2 \| \diff t$. Based on the definition of our time meshes, for each macro time-step $I^n$, there is micro time-stepping in only one of the submeshes. Therefore, it is sufficient to only consider the following possibilities: 
\begin{enumerate}
    \item There is no micro time-stepping in $I_1^k$, from which follows that $N^n_1 = 1$ and $I_1^k\Big|_{I^n} = \bar{I}^k\Big|_{I^n}$
      \begin{equation*}
        \begin{aligned}
          &\sum_{m = 1}^{N^n_1} \int_{I^{n,m}_1} L_1 \|I_1^k u_2^k - \bar{I}^k u_2 \| \diff t = \int_{I^n} L_1 \|\bar{I}^k u_2^k - \bar{I}^k u_2 \| \diff t \\
          &\qquad = L_1 \left| \left| \sum_{m = 1}^{N^n_2} k^{n,m}_2 u_2^k(t^{n,m}_2) - \int_{I^n} u_2(s) \diff s \right| \right|  \\
          & \qquad \leq L_1 \sum_{m = 1}^{N^n_2} k^{n,m}_2 \Big( \|e_2^k(t^{n,m}_2) \| +  \frac{1}{k^{n,m}_2}{\int_{I^{n,m}_2}}\left| \left|  u_2(t^{n,m}_2) -  u_2(s) \right| \right|\diff s \Big)\\
          & \qquad \leq \sum_{m = 1}^{N^n_2} L_1 k^{n,m}_2 \|e_2^k(t^{n,m}_2) \| + \sum_{m = 1}^{N^n_2} L_1 (k^{n,m}_2)^2 \max_{t \in I} \|f_2(t, u_1, u_2)\|.
        \end{aligned}
      \end{equation*}
    \item There is no micro time-stepping in $I_2^k$, from which follows that $N^n_2 = 1$ and $I_2^k\Big|_{I^n} = \bar{I}^k\Big|_{I^n}$
    \begin{equation*}
    \begin{aligned}
      & \sum_{m = 1}^{N^n_1} \int_{I^{n,m}_1} L_1 \|I_1^k u_2^k - \bar{I}^k u_2 \| \diff t = \sum_{m = 1}^{N^n_1} L_1 k^{n,m}_1 \left|\left| u_2^k(t^n) - \frac{1}{k^n} \int_{I^n} u_2(s) \diff s \right|\right| \\
      &\qquad =L_1 \left|\left| k^n u_2^k(t^n) - \int_{I^n} u_2(s) \diff s \right|\right| \\
      & \qquad \leq  L_1 k^{n} \|e_2^k(t^{n}) \| +  L_1 (k^{n})^2 \max_{t \in I} \|f_2(t, u_1, u_2)\| \\
      & \qquad = \sum_{m = 1}^{N^n_2} L_1 k^{n,m}_2 \|e_2^k(t^{n,m}_2) \| + \sum_{m = 1}^{N^n_2} L_1 (k^{n,m}_2)^2 \max_{t \in I} \|f_2(t, u_1, u_2)\|.
    \end{aligned}
    \end{equation*}
\end{enumerate}

We will continue with the estimation of the remaining term in the error equation~(\ref{error_equation})

%\begin{equation*}
%\begin{aligned}
%\|\tau_1^k\| &=  \left|\left| (I_1^k u_1)(t_N)  - \sum_{n=1}^{N}\sum_{m=1}^{N_n^1} \int_{I_{n,m}^1} f_1(I_1^k t, I_1^k u_1, \bar{I}^k u_2) \diff t\right|\right| 
%\end{aligned}
%\end{equation*}

%We can rewrite 

%\begin{equation*}
%    \begin{aligned}
%        (I_1^k u_1)(t_N)  & = \frac{1}{k_N^1} \int_{I_N^1} (s - t_N) \left\{ \frac{u_1(s) - u_1(t_N)}{s - t_N} - f_1(t_N, u_1(t_N), u_2(t_N))\right\} \diff s \\
%        & \qquad + u_1(t_N) + \frac{1}{k_N^1}\int_{I_N^1} (s-t_N) f_1(t_N, u_1(t_N), u_2(t_N)) \diff s
%    \end{aligned}
%\end{equation*}
%Thus, using the term $u_1(t_N)$ we get
\begin{equation*}
\begin{aligned}
  \|\tau_1^k\| &\leq  %\frac{1}{2} (k_N^1)^2 \|\max_{t \in I} d_t d_t u\| + \frac{1}{2} k_N^1 \left|\left| f_1(t_N^1, u_1(t_N^1), u_2(t_N^1)) \right|\right|
  %\\ & + 
  \sum_{n=1}^{N}\sum_{m=1}^{N^n_1} \Bigg|\Bigg|u_1(t^{n,m}_1) - u_1(t^{n,m-1}_1) - k^{n,m}_1f_1(t^{n,m}_1, u_1(t^{n,m}_1), u_2(t^{n,m}_1)) \Bigg|\Bigg| \\
  & +  \sum_{n=1}^{N}\sum_{m=1}^{N^n_1} \int_{I^{n,m}_1}\Bigg|\Bigg|f_1(t^{n,m}_1, u_1(t^{n,m}_1), u_2(t^{n,m}_1))  - f_1( t^{n,m}_1, u_1(t^{n,m}_1), \bar{I}^k u_2)\Bigg|\Bigg| \diff t. 
\end{aligned}
\end{equation*}
We further have 
\begin{equation*}
 \Bigg|\Bigg|u_1(t^{n,m}_1) - u_1(t^{n,m-1}_1) - k^{n,m}_1f_1(t^{n,m}_1, u_1(t^{n,m}_1), u_2(t^{n,m}_1)) \Bigg|\Bigg|
 \leq  \frac{1}{2} (k^{n,m}_1)^2 \max_{t \in I} \| d_t f_1(t, u_1, u_2)\|
\end{equation*}
as well as 
\begin{equation*}
\begin{aligned}
& \int_{I^{n,m}_1}\Bigg|\Bigg|f_1(t^{n,m}_1, u_1(t^{n,m}_1), u_2(t^{n,m}_1)) - f_1(t^{n,m}_1, u_1(t^{n,m}_1), \bar{I}^k u_2)\Bigg|\Bigg| \diff t \\
& \quad \leq  L_1 \frac{k^{n,m}_1}{k^n}\left|\left| \int_{I^{n}} \left(u_2(t^{n,m}_1) -  u_2(s) \right) \diff s\right|\right| \leq L_1 k^{n,m}_1 k^n\max_{t \in I} \|f_1(t, u_1, u_2)\|.
\end{aligned}
\end{equation*}
Analogously, one can analyze $e_2^k$. Once we do that, we can proceed with Gronwall's lemma. The terms corresponding to the last time-step in the inequality (\ref{gronwall_equation}) are then transferred from the right to the left side. If we assume that $(k^N_1 + k^N_2)(L_1  + L_2) \leq \frac{1}{2}$, then it holds

\begin{equation*}
  \frac{1}{2} \left(\|e_1^k(t^N)\| +\|e_2^k(t^N)\| \right)
  \leq   \left\{1 - k_1^N(L_1 + L_2)\right\}\|e_1^k(t^N)\| + \left\{1 - k_2^N(L_1 + L_2)\right\}\|e_2^k(t^N)\|.
\end{equation*}
Applying Gronwall's lemma yields the results.
\end{proof}

\begin{remark}[Separation of the time scales]
  \label{remark:sep}
  In a simplified form, we were able to prove
  \begin{equation*}
    \begin{aligned}
      \big| \big| \textbf{e}^k(t^N)\big|\big|  & = \mathcal{O}\left(k_1\left| \left| d_t f_1(t, u_1, u_2) \right| \right|\right) + \mathcal{O}\left(k \left| \left| f_1(t, u_1, u_2) \right| \right| \right) \\
      &\quad + \mathcal{O}\left(k_2\left| \left| d_t f_2(t, u_1, u_2) \right| \right|\right) + \mathcal{O}\left(k \left| \left| f_2(t, u_1, u_2) \right| \right| \right).
    \end{aligned}
  \end{equation*}
  Based on that, we can make a few observations. First, we obtained linear convergence in time typical for the implicit Euler scheme.
  Second, while we were not able to fully decouple the two problems, the macro time step $k$ only acts on the lower order term $f$ but not its derivative. This indicates that the oscillations of functions $f_1$, and $f_2$ are localizable to each of the two subproblems and an efficient discretization by a multirate method is possible. This is in agreement with the a posteriori error estimate and the numerical results demonstrated in~\cite{Soszynska2021}. 
\end{remark}

%----------------------------------------------------------

%\input{heat}
\section{Coupling of Heat Equations}
 In this section, we consider a heat equation prescribed on two domains $\bar{\Omega}_1 \cup \bar{\Omega}_2 = \Omega \subset \mathbb{R}^d $ for $d \in \{ 2, 3\}$ with a common interface $\Gamma$. The domains are illustrated in Figure~\ref{domain}. On each of the domains, we choose different diffusivity constants $\boldsymbol{\nu} = (\nu_1, \nu_2)^T$ and external forces $\textbf{f} = (\textbf{f}_1, \textbf{f}_2)^T \in L^2(\Omega)^d$. We define a space-time domain for any arbitrary function space $V$
\begin{equation}
    X(V) \coloneqq \left\{ v \in L^2(I, V) |\; \partial_t v \in L^2(I, V^*) \right\},
\label{space_time_domain}
\end{equation}
and take $\textbf{u} = (\textbf{u}_1, \textbf{u}_2)^T$ with $\textbf{u}_j \in X(H^1(\Omega_j))^d$ for $j = 1,2$. The solution $\textbf{u}: \Omega \times I \to \mathbb{R}^d$ is governed by the equations
\begin{equation}
\partial_t \textbf{u}_1 - \nu_1 \Delta \textbf{u}_1 = \textbf{f}_1\;\text{ in }\Omega_1\text{ and }
\partial_t \textbf{u}_2 - \nu_2 \Delta \textbf{u}_2 = \textbf{f}_2\;\text{ in }\Omega_2. 
\label{heat_continuous}
\end{equation}
\begin{figure}
\centering
\begin{tikzpicture}[scale = 0.6]
    \draw  plot[smooth, tension=.7, very thick] coordinates {(-3.5,0.5) (-3,2.5) (-1,3.5) (1.5,3) (4,3.5) (5,2.5) (5,0.5) (2.5,-2) (0,-0.5) (-3,-2) (-3.5,0.5)};
    \draw (-5.0, -3.0) -- (6.5, -3.0) -- (6.5, 5.0) -- (-5.0, 5.0) -- (-5.0, -3.0);
    \node at (5.5, -2.0) {\large{$\Omega_2$}};
    \node at (0.75, 1.0) {\large{$\Omega_1$}};
    \node at (0.75, 1.0) {\large{$\Omega_1$}};
    \node at (-3.0, 3.3) {\large{$\Gamma$}};
\end{tikzpicture}
\caption{We show the splitting of the domain $\Omega$ into $\Omega_1$ and $\Omega_2$ with a common interface $\Gamma$.}
\label{domain}
\end{figure}
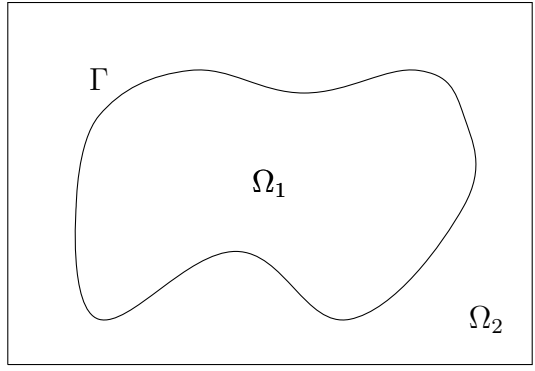
 On the interface $\Gamma=\partial\Omega_1\cap\partial\Omega_2$ we impose coupling conditions typical for continuous two-phase flow problems, see, for instance~\cite{Gross2006}, that is continuity of the solutions (kinematic condition) and balance of stress in the normal direction (dynamic condition) 
\begin{equation}
\textbf{u}_1 = \textbf{u}_2,\quad 
\nu_1 \partial_{\textbf{n}_1}\textbf{u}_1 = -\nu_2 \partial_{\textbf{n}_2}\textbf{u}_2 \text{ on }\Gamma.
\label{coupling_conditions}
\end{equation}
By $\textbf{n}_1$ and $\textbf{n}_2$ we denote normal vectors corresponding to each of the domains. In particular, on the interface we have $\textbf{n}_1 = - \textbf{n}_2$.  On the outer boundary $\partial \Omega$ we choose a no-slip boundary condition $\textbf{u}_1 = \textbf{u}_2 = \textbf{0}$. Similarly, at initial time we set $\textbf{u}_1(0) = \textbf{u}_2(0) = \textbf{0}$.  After integration by parts using test functions $\boldsymbol{\varphi} = (\boldsymbol{\varphi}_1, \boldsymbol{\varphi}_2)^T$ with $\boldsymbol{\varphi}_j \in X(H^1(\Omega_j))^d$ for $j = 1,2$,  we obtain
\begin{equation}
  \int_{I} \Big\{\left(\textbf{d}_t \textbf{u}, \boldsymbol{\varphi}\right)_{\Omega} + \boldsymbol{\nu}\left(\nabla \textbf{u}, \nabla \boldsymbol{\varphi}\right)_{\Omega} 
  -  \left\langle \nu_1 \partial_{\textbf{n}_1} \textbf{u}_1, \boldsymbol{\varphi}_1\right\rangle_{\Gamma} -  \left\langle \nu_2 \partial_{\textbf{n}_2} \textbf{u}_2, \boldsymbol{\varphi}_2 \right\rangle_{\Gamma}  \Big\}\diff t 
  = \int_{I} \left(\textbf{f}, \boldsymbol{\varphi} \right)_{\Omega} \diff t.
  \label{variational_heat_continuous}
\end{equation}
Given the coupling conditions (\ref{coupling_conditions}),
we have
\begin{equation}
\nu_1 \partial_{\textbf{n}_1} \textbf{u}_1 = \frac{1}{2} \left(\nu_1 \partial_{\textbf{n}_1} \textbf{u}_1 - \nu_2 \partial_{\textbf{n}_2} \textbf{u}_2 \right) = -\nu_2 \partial_{\textbf{n}_2} \textbf{u}_2
\label{neumann_coupling}
\end{equation}
and therefore the interface terms are equal to
\begin{equation*}
    -  \left\langle \nu_1 \partial_{\textbf{n}_1} \textbf{u}_1, \boldsymbol{\varphi}_1\right\rangle_{\Gamma} -  \left\langle \nu_2 \partial_{\textbf{n}_2} \textbf{u}_2, \boldsymbol{\varphi}_2 \right\rangle_{\Gamma}  = \frac{1}{2}\left\langle \nu_1 \partial_{\textbf{n}_1} \textbf{u}_1 - \nu_2 \partial_{\textbf{n}_2} \textbf{u}_2, \boldsymbol{\varphi}_2 - \boldsymbol{\varphi}_1\right\rangle_{\Gamma}.
\end{equation*}
To symmetrize the formulation, we subtract
\begin{equation}
     \frac{1}{2} \left\langle \textbf{u}_2 - \textbf{u}_1, \nu_1 \partial_{\textbf{n}_1} \boldsymbol{\varphi}_1 - \nu_2 \partial_{\textbf{n}_2} \boldsymbol{\varphi}_2\right\rangle_{\Gamma},
    \label{symmetric_coupling}
\end{equation}
which vanishes once $\textbf{u}_1=\textbf{u}_2$ on the interface. Further we add a Nitsche term
\begin{equation*}
\gamma\left\langle \textbf{u}_2 - \textbf{u}_1, \boldsymbol{\varphi}_2 - \boldsymbol{\varphi}_1\right\rangle_{\Gamma}.
\end{equation*}
By $\gamma$ we denote the Nitsche constant, we refer to the original paper of NItsche~\cite{nitsche} and to~\cite{BurmanFernandez}, where similar approaches are applied to two-phase flow problems and fluid-structure interactions. Since the exact solution fulfills coupling conditions and therefore the interface terms are equal to zero, we obtain a consistent and, as we will later see, coercive formulation
\begin{multline}
  a(\textbf{u}, \boldsymbol{\varphi})  \coloneqq  \int_{I} \Big\{\left(\textbf{d}_t \textbf{u}, \boldsymbol{\varphi}\right)_{\Omega} + \boldsymbol{\nu}\left(\nabla \textbf{u}, \nabla \boldsymbol{\varphi}\right)_{\Omega}
    + \gamma\left\langle \textbf{u}_2 - \textbf{u}_1, \boldsymbol{\varphi}_2 - \boldsymbol{\varphi}_1\right\rangle_{\Gamma} 
    \\
    \quad +  \frac{1}{2}\left\langle \nu_1 \partial_{\textbf{n}_1} \textbf{u}_1 - \nu_2 \partial_{\textbf{n}_2} \textbf{u}_2, \boldsymbol{\varphi}_2 - \boldsymbol{\varphi}_1\right\rangle_{\Gamma} 
    - \frac{1}{2} \left\langle \textbf{u}_2 - \textbf{u}_1, \nu_1 \partial_{\textbf{n}_1} \boldsymbol{\varphi}_1 - \nu_2 \partial_{\textbf{n}_2} \boldsymbol{\varphi}_2\right\rangle_{\Gamma} 
    \Big\}\diff t\\
    = \int_{I} \left(\textbf{f}, \boldsymbol{\varphi} \right)_{\Omega} \diff t.
  \label{variational_heat_continuous_symmetric}
\end{multline}
This formulation guarantees the fulfillment of the coupling conditions~(\ref{coupling_conditions}) even without any assumptions on the continuity of trial and test functions on the interface. Due to the arbitrariness of test functions, condition~(\ref{symmetric_coupling}) leads to the continuity across the interface of solutions. Additional interface terms coming from integration by parts and returning to the strong formulation~(\ref{heat_continuous}) as well as property~(\ref{neumann_coupling}) guarantee the balance of stress.
This variational treatment of the interface conditions was first proposed by P. Hansbo and M. G. Larson in \cite{DG}.

 We will use the notations $(\cdot, \cdot)_{\Omega_1}$, $(\cdot, \cdot)_{\Omega_2}$ and $(\cdot, \cdot)_{\Omega}$ to indicate the $L^2$-product over a corresponding domain. We will denote the norms over each of the domains in a similar way. On the interface, using Riesz representation theorem, we then define
\begin{equation*}
    \langle \textbf{u}, \boldsymbol{\varphi} \rangle_{\Gamma} \coloneqq \langle \textbf{u}, \boldsymbol{\varphi} \rangle_{H^{-\frac{1}{2}}(\Gamma)^d \times H^{\frac{1}{2}} (\Gamma)^d}\ , \hspace{0.5 cm} ||\textbf{u}||_{\Gamma} \coloneqq \sqrt{\langle \textbf{u}, \textbf{u}\rangle_{\Gamma}} \ .
\end{equation*}

%------------------------------------------------------------------------------------------------------------------------------------

\subsection{Discretization in Time}
 With the help of the projection operators $\textbf{I}_1^k$ and $\textbf{I}_2^k$ (we use bold letters to indicate that we perform projections on multidimensional functions), we are ready to formulate a semi-discrete variational problem again using the implicit Euler time-stepping scheme
\begin{equation}
  \begin{aligned}
    a^{k}(\textbf{u}^{k}, &\boldsymbol{\varphi}^{k})  \coloneqq  \int_{I} \Big\{\big(\textbf{d}_t^k \textbf{u}^{k}, \boldsymbol{\varphi}^{k}\big)_{\Omega} + \boldsymbol{\nu}\left(\nabla \textbf{u}^{k}, \nabla \boldsymbol{\varphi}^{k}\right)_{\Omega} \\
    &\quad - \frac{1}{2} \left\langle \nu_1 \partial_{\textbf{n}_1} \textbf{u}_1^k - \nu_2 \partial_{\textbf{n}_2} \textbf{I}_1^k \textbf{u}_2^k, \boldsymbol{\varphi}_1^k \right\rangle_{\Gamma} 
    + \frac{1}{2} \left\langle \nu_1 \partial_{\textbf{n}_1} \textbf{I}_2^k \textbf{u}_1^k - \nu_2 \partial_{\textbf{n}_2} \textbf{u}_2^k, \boldsymbol{\varphi}_2^k \right\rangle_{\Gamma}
    \\
    & \quad - \frac{1}{2} \left\langle \textbf{I}_1^k \textbf{u}_2^k - \textbf{u}_1^k, \nu_1 \partial_{\textbf{n}_1}\boldsymbol{\varphi}_1^k \right\rangle_{\Gamma}
    + \frac{1}{2} \left\langle \textbf{u}_2^k - \textbf{I}_2^k\textbf{u}_1^k, \nu_2 \partial_{\textbf{n}_2}\boldsymbol{\varphi}_2^k \right\rangle_{\Gamma}
    \\
    & \quad - \gamma \left\langle \textbf{I}_1^k \textbf{u}_2^k - \textbf{u}_1^k, \boldsymbol{\varphi}_1^k \right\rangle_{\Gamma}
    + \gamma \left\langle \textbf{u}_2^k -\textbf{I}_2^k\textbf{u}_1^k, \boldsymbol{\varphi}_2^k \right\rangle_{\Gamma}
    \Big\}\diff t
    = \int_{I} \left(\textbf{f}, \boldsymbol{\varphi}^{k} \right)_{\Omega} \diff t.
  \end{aligned}
  \label{discrete_heat}
\end{equation}
The corresponding function spaces are defined as ($j=1,2$) 
\begin{equation} 
    X^{k}_j \coloneqq{} \big\{ \boldsymbol{\varphi} \in L^2(\bar{I}, H^1(\Omega_{j}))\Big|\; \varphi|_{I^{n,m}_j} 
    \in {\cal P}_0(H^1({\Omega_j})) 
    \text{ for all }
    I^{n,m}_j \subset I, \; \boldsymbol{\varphi}(0) = \textbf{0} \big\}
\end{equation}
and $X^k \coloneqq X^k_1 \times X^k_2$. The test space $Y^k$ is defined in a similar fashion, however, using global $H^1$ functions over the whole domain $\Omega$ 
\begin{align*}
  Y^{k} \coloneqq {} \big\{\boldsymbol{\varphi} = (\varphi_1, \varphi_2)^T \in L^2(\bar{I},  H^1(\Omega))\Big| & \; \varphi_1|_{\Omega_1} \in X^{k}_1, \varphi_2|_{\Omega_2} \in X^{k}_2  \big\},
\end{align*}
Testing with $\boldsymbol{\varphi}^k \in (Y^{k})^d$ allows us to recover the coupling conditions in a weak form, see~\cite[Sec. 3.4]{Richter2017}
\begin{equation}
  \begin{aligned}
    &0 =  \int_{I_n} \big\langle \textbf{u}_2^k - \textbf{u}_1^k, \boldsymbol{\varphi}_1^k \big\rangle_{\Gamma} \diff t = \int_{I_n} \big\langle \textbf{u}_2^k - \textbf{u}_1^k, \boldsymbol{\varphi}_2^k \big\rangle_{\Gamma} \diff t, \\
    &0 = \int_{I_n} \big\langle\nu_1 \partial_{\textbf{n}_1} \textbf{u}_1^k + \nu_2 \partial_{\textbf{n}_2} \textbf{u}_2^k, \boldsymbol{\varphi}_1^k \rangle_{\Gamma} \diff t= \int_{I_n} \big\langle\nu_1 \partial_{\textbf{n}_1} \textbf{u}_1^k + \nu_2 \partial_{\textbf{n}_2} \textbf{u}_2^k, \boldsymbol{\varphi}_2^k \rangle_{\Gamma} \diff t.
  \end{aligned}
  \label{weak_coupling_heat}
\end{equation}

\begin{theorem}
Let $\textbf{u} \in X(H^1_0(\Omega))^d$, $\textbf{u}_j \in W^{1, \infty}(H^2(\Omega_j))^d$ for $j=1,2$ be continuous solutions to~(\ref{variational_heat_continuous_symmetric}) and $\textbf{u}^k~\in~(X^k)^d$ their semi-discrete counterpart and a solution to~(\ref{discrete_heat}), then the following estimate holds
\begin{equation}
	\big| \big| \textbf{e}^k(t^N)\big|\big|^2_{\Omega} + \int_I \boldsymbol{\nu}^2\left|\left|\nabla \textbf{e}^k \right|\right|^2_{\Omega}   \diff t
   \leq
  \sum_{j=1}^2
  C_j\sum_{n = 1}^{N}\sum_{m = 1}^{N^n_j}  \Bigg\{(k^{n, m}_j)^3 \max_{t \in I} \left| \left| \textbf{d}_t \nabla \textbf{u}_j \right| \right|^2_{\Omega_j}  + (k^{n, m}_1)^3 \max_{t \in I} \left|\left|\textbf{d}_t \partial_{\textbf{n}_j}\textbf{u}_j\right|\right|_{\Gamma}^2\Bigg\} 
\end{equation}
where the errors $\textbf{e}^k = (\textbf{e}_1^k, \textbf{e}_2^k)^T$ are defined as $\textbf{e}_j^k \coloneqq \textbf{u}_j^k - \textbf{i}_j^k \textbf{u}_j$, for $j=1,2$.
%The constants $C_{tr}^1$ and $C_{tr}^2$ result from the trace inequality and will be further specified in the proof.
\label{theorem_semidiscrete_heat}
\end{theorem}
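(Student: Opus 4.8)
The plan is to run the standard energy argument for the implicit Euler (lowest-order Galerkin) time discretization, resting on three ingredients: a Galerkin-type consistency identity, coercivity of $a^k$ in the natural energy norm, and interpolation estimates for the nodal projection error $\textbf{u}-\textbf{i}^k\textbf{u}$. First I would set up the error equation. Because the exact solution satisfies the coupling conditions~(\ref{coupling_conditions}), all interface terms in~(\ref{variational_heat_continuous_symmetric}) involving $\textbf{u}$ vanish and $a(\textbf{u},\boldsymbol{\varphi}^k)=\int_I(\textbf{f},\boldsymbol{\varphi}^k)_\Omega\diff t$, while the discrete solution gives $a^k(\textbf{u}^k,\boldsymbol{\varphi}^k)=\int_I(\textbf{f},\boldsymbol{\varphi}^k)_\Omega\diff t$. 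Inserting the interpolant $\textbf{i}^k\textbf{u}$ then yields
\begin{equation*}
a^k(\textbf{e}^k,\boldsymbol{\varphi}^k)=a(\textbf{u},\boldsymbol{\varphi}^k)-a^k(\textbf{i}^k\textbf{u},\boldsymbol{\varphi}^k),
\end{equation*}
whose right-hand side is the consistency error that I would estimate term by term.

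The structural heart of the proof is coercivity of $a^k$ on the diagonal. Testing with $\boldsymbol{\varphi}^k=\textbf{e}^k$, the discrete time derivative telescopes over each sub-mesh: on a micro interval the contribution of $(\textbf{d}_t^k\textbf{e}^k,\textbf{e}^k)$ equals $(\textbf{e}^k(t_j^{n,m})-\textbf{e}^k(t_j^{n,m-1}),\textbf{e}^k(t_j^{n,m}))$, and the elementary inequality $b(b-a)\ge\frac12(b^2-a^2)$ collapses the sum to $\frac12\|\textbf{e}^k(t^N)\|_\Omega^2$ since $\textbf{e}^k(0)=\textbf{0}$; the diffusion term contributes $\int_I\boldsymbol{\nu}\|\nabla\textbf{e}^k\|_\Omega^2\diff t$. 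A technical point is that $\textbf{e}^k$ lies in the discontinuous trial space $(X^k)^d$ rather than the continuous test space $(Y^k)^d$, so I would establish coercivity directly on $(X^k)^d$: with matching interface data the symmetric Nitsche block reduces to the positive penalty $\gamma\langle\textbf{e}_2^k-\textbf{e}_1^k,\textbf{e}_2^k-\textbf{e}_1^k\rangle_\Gamma$, while the indefinite normal-flux and symmetrization terms are absorbed through a trace (or inverse) inequality combined with Young's inequality, which succeeds once the Nitsche parameter $\gamma$ is chosen large enough relative to $\boldsymbol{\nu}$ and the trace constant. Coercivity then controls exactly the norm on the left-hand side of the claim.

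For the consistency error I would split $a(\textbf{u},\textbf{e}^k)-a^k(\textbf{i}^k\textbf{u},\textbf{e}^k)$ into a volume and an interface contribution. The volume part is driven by the gradient projection error $\int_I\boldsymbol{\nu}(\nabla(\textbf{u}-\textbf{i}^k\textbf{u}),\nabla\textbf{e}^k)_\Omega\diff t$; since $\|\nabla\textbf{u}_j(t)-\nabla\textbf{u}_j(t_j^{n,m})\|_{\Omega_j}\le k_j^{n,m}\max_{t\in I}\|\textbf{d}_t\nabla\textbf{u}_j\|_{\Omega_j}$ on $I_j^{n,m}$, squaring and integrating over the interval produces the $(k_j^{n,m})^3\max_{t\in I}\|\textbf{d}_t\nabla\textbf{u}_j\|_{\Omega_j}^2$ contribution after a Young step, the gradient factor of $\textbf{e}^k$ being absorbed into the coercivity. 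The time-derivative part is harmless because $\textbf{i}^k$ is exact at the nodes, so that $\int_{I_j^{n,m}}\textbf{d}_t^k\textbf{i}^k\textbf{u}\,\diff t$ reproduces $\textbf{u}(t_j^{n,m})-\textbf{u}(t_j^{n,m-1})$ and matches the continuous increment.

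The interface consistency terms are where the multirate coupling genuinely enters, and I expect this to be the main obstacle. The transfer operators $\textbf{I}_1^k,\textbf{I}_2^k$ carry the interface data of one subproblem onto the time mesh of the other, and the associated transfer error must be paired with test functions that, on a given macro step, are piecewise constant. The decisive tool is the average-zero property~(\ref{average_zero}): over each macro interval the constant part of the transfer error integrates to zero, so only the first-order remainder survives, of size $\mathcal{O}(k_j^{n,m})$ in the relevant normal trace, which after squaring and integrating yields precisely the $(k_j^{n,m})^3\max_{t\in I}\|\textbf{d}_t\partial_{\textbf{n}_j}\textbf{u}_j\|_\Gamma^2$ terms when combined via Young's inequality with the interface part of the coercivity; here one also uses that the exact solution satisfies~(\ref{coupling_conditions}), so the genuinely exact interface residual vanishes and only the mesh-transfer mismatch remains. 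Finally I would assemble the pieces: bound the left-hand side of the error equation from below by coercivity, the right-hand side from above by the volume and interface consistency estimates, and use Young's inequality to absorb all residual factors of $\|\textbf{e}^k(t^N)\|_\Omega$, $\int_I\boldsymbol{\nu}\|\nabla\textbf{e}^k\|_\Omega^2\diff t$ and the interface jump of $\textbf{e}^k$ into the left, reading off the stated bound. The two hardest points are guaranteeing that coercivity survives the non-matching meshes and showing that the interface transfer error is one order higher thanks to~(\ref{average_zero}); the volume terms are routine.
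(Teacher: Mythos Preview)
Your overall skeleton (Galerkin orthogonality, test with $\textbf{e}^k$, telescope the discrete time derivative, bound the volume consistency by the nodal interpolation error, invoke~(\ref{average_zero}) for the interface transfer) matches the paper. The genuine gap is your coercivity argument. You propose to absorb the indefinite normal-flux terms of $a^k(\textbf{e}^k,\textbf{e}^k)$ by a trace or inverse inequality together with a large Nitsche parameter~$\gamma$. That mechanism is a finite-element device: it rests on $\|\partial_{\textbf n_j}\textbf v_h\|_\Gamma\le C_{\mathrm{inv}}h^{-1/2}\|\nabla\textbf v_h\|_{\Omega_j}$, which is meaningless here because the problem is only discretized in time and $\textbf e^k_j(\cdot,t)$ is a generic $H^1(\Omega_j)$ function. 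No trace estimate controls $\|\partial_{\textbf n_j}\textbf e^k_j\|_\Gamma$ by $\|\nabla\textbf e^k_j\|_{\Omega_j}$, so your absorption step fails and $a^k$ is \emph{not} coercive on $(X^k)^d$ in the way you claim.

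The paper avoids this obstacle altogether: it does not seek coercivity of $a^k(\textbf e^k,\textbf e^k)$. Instead it keeps the interface terms of $a^k(\textbf u^k,\textbf e^k)$ intact and uses the \emph{weak coupling conditions}~(\ref{weak_coupling_heat}) satisfied by the semi-discrete solution $\textbf u^k$ to show that every non-coercive interface contribution vanishes exactly. Concretely, the Dirichlet part $\int_{I^n}\langle\textbf u_2^k-\textbf u_1^k,\,\cdot\,\rangle_\Gamma\diff t=0$ kills the symmetrization and penalty terms, the mean-zero property~(\ref{average_zero}) of $\textbf I^k$ annihilates all $(\textbf u_j^k-\textbf I^k_{\hat j}\textbf u_j^k)$ contributions, and the remaining flux term $\frac12\langle\nu_1\partial_{\textbf n_1}\textbf u_1^k-\nu_2\partial_{\textbf n_2}\textbf u_2^k,\textbf e_2^k-\textbf e_1^k\rangle_\Gamma$ is first rewritten, via the weak Dirichlet and Neumann conditions, as a pairing with $\textbf i_1^k\textbf u_1-\textbf i_2^k\textbf u_2$ and then shown to vanish using the macro/micro structure of the time mesh together with the strong coupling $\textbf u_1=\textbf u_2$ on~$\Gamma$. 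Only after all these cancellations does one arrive at the clean energy identity $\int_I\{(\textbf d_t^k\textbf e^k,\textbf e^k)_\Omega+\boldsymbol\nu\|\nabla\textbf e^k\|_\Omega^2\}\diff t=\ldots$, and the right-hand side interface residual is then bounded by the ordinary $H^1$ trace inequality $\|\textbf e_j^k\|_\Gamma\le c_j\|\nabla\textbf e_j^k\|_{\Omega_j}$ (no normal derivative of $\textbf e^k$ ever appears). To repair your argument you must replace the Nitsche-absorption step by this exact-cancellation mechanism based on~(\ref{weak_coupling_heat}).
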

\begin{proof}
Using Galerkin orthogonality, we have $a^k(\textbf{u}^k, \textbf{e}^k) = a(\textbf{u}, \textbf{e}^k)$ and therefore it holds
\begin{equation}
    a^k(\textbf{u}^k, \textbf{e}^k) - \int_{I} \Big\{\big(\textbf{d}_t^k \textbf{i}^k \textbf{u}, \textbf{e}^k\big)_{\Omega} + \boldsymbol{\nu} \big(\nabla \textbf{i}^k \textbf{u}, \nabla{\textbf{e}^k} \big)_{\Omega} \Big\} \diff = a(\textbf{u}, \textbf{e}^k) - \int_{I} \Big\{\big(\textbf{d}_t^k \textbf{i}^k \textbf{u}, \textbf{e}^k\big)_{\Omega} + \boldsymbol{\nu} \big(\nabla \textbf{i}^k \textbf{u}, \nabla{\textbf{e}^k} \big)_{\Omega} \Big\} \diff t.
\label{semi_discrete_galerkin}
\end{equation}
By adding and subtracting terms, the lest side of this identity can be rewritten as 
\begin{equation}
    \begin{aligned}
      & a^k(\textbf{u}^k, \textbf{e}^k) - \int_{I} \Big\{\big(\textbf{d}_t^k \textbf{i}^k \textbf{u}, \textbf{e}^k\big)_{\Omega} + \boldsymbol{\nu} \big(\nabla \textbf{i}^k \textbf{u}, \nabla{\textbf{e}^k} \big)_{\Omega} \Big\} \diff t \\
      & \qquad= \int_{I} \Big\{ \Big[\big(\textbf{d}_t^k \textbf{e}^k, \textbf{e}^k \big)_{\Omega} + \boldsymbol{\nu} \big|\big|\nabla \textbf{e}^k \big|\big|_{\Omega}^2\Big]^{(iv)}
      + \Big[\frac{1}{2}\left\langle \nu_1 \partial_{\textbf{n}_1} \textbf{u}_1^k - \nu_2 \partial_{\textbf{n}_2} \textbf{u}_2^k, \textbf{e}_2^k - \textbf{e}_1^k\right\rangle_{\Gamma}\Big]^{(iii)} \\
      & \qquad \Big[- \frac{1}{2}\left\langle \textbf{u}_2^k - \textbf{u}_1^k, \nu_1 \partial_{\textbf{n}_1} \textbf{e}_1^k - \nu_2 \partial_{\textbf{n}_2} \textbf{e}_2^k\right\rangle_{\Gamma} +\gamma \left\langle \textbf{u}_2^k - \textbf{u}_1^k, \textbf{e}_2^k - \textbf{e}_1^k \right\rangle_{\Gamma}\Big]^{(i)} \\
      & \qquad
      \Big[- \frac{1}{2} \big\langle  \nu_2 \partial_{\textbf{n}_2} (\textbf{u}_2^{k} - \textbf{I}_1^k \textbf{u}_2^{k}), \textbf{e}_1^{k} \big\rangle_{\Gamma} 
+ \frac{1}{2} \big\langle \nu_1 \partial_{\textbf{n}_1}( \textbf{I}_2^k \textbf{u}_1^{k} - \textbf{u}_1^{k}), \textbf{e}_2^{k} \big\rangle_{\Gamma}
\\
& \qquad- \frac{1}{2} \big\langle \textbf{I}_1^k \textbf{u}_2^{k} - \textbf{u}_2^{k}, \nu_1 \partial_{\textbf{n}_1}\textbf{e}_1^{k} \big\rangle_{\Gamma} 
+ \frac{1}{2} \big\langle \textbf{u}_1^{k} - \textbf{I}_2^k\textbf{u}_1^{k}, \nu_2 \partial_{\textbf{n}_2}\textbf{e}_2^{k} \big\rangle_{\Gamma}
 - \gamma \big\langle \textbf{I}_1^k \textbf{u}_2^{k} - \textbf{u}_2^{k}, \textbf{e}_1^{k} \big\rangle_{\Gamma} \\
&\qquad + \gamma \big\langle \textbf{u}_1^{k} - \textbf{I}_2^k\textbf{u}_1^{k}, \textbf{e}_2^{k} \big\rangle_{\Gamma}\Big]^{(ii)}
    \Big\} \diff t. 
    \end{aligned}
    \label{heat_orthogonal}
\end{equation}
Since the semi-discrete solution fulfills the coupling conditions in the sense of~(\ref{weak_coupling_heat}), specifically the Dirichlet condition, we have for term $(i)$
\begin{equation*}
\int_I \left\{- \frac{1}{2}\left\langle \textbf{u}_2^k - \textbf{u}_1^k, \nu_1 \partial_{\textbf{n}_1} \textbf{e}_1^k - \nu_2 \partial_{\textbf{n}_2} \textbf{e}_2^k\right\rangle_{\Gamma}+\gamma \left\langle \textbf{u}_2^k - \textbf{u}_1^k, \textbf{e}_2^k - \textbf{e}_1^k \right\rangle_{\Gamma} \right\} \diff t = 0.
\end{equation*}

We can further simplify expression~(\ref{heat_orthogonal}) by noticing, that on every macro time-step $I^n$ the term $(ii)$ vanishes
\begin{equation}
    \begin{aligned}
    &\int_{I^n} \Big\{ - \frac{1}{2} \big\langle  \nu_2 \partial_{\textbf{n}_2} (\textbf{u}_2^{k} - \textbf{I}_1^k \textbf{u}_2^{k}), \textbf{e}_1^{k} \big\rangle_{\Gamma} 
+ \frac{1}{2} \big\langle \nu_1 \partial_{\textbf{n}_1}( \textbf{I}_2^k \textbf{u}_1^{k} - \textbf{u}_1^{k}), \textbf{e}_2^{k} \big\rangle_{\Gamma}
\\ & \qquad - \frac{1}{2} \big\langle \textbf{I}_1^k \textbf{u}_2^{k} - \textbf{u}_2^{k}, \nu_1 \partial_{\textbf{n}_1}\textbf{e}_1^{k} \big\rangle_{\Gamma}
+ \frac{1}{2} \big\langle \textbf{u}_1^{k} - \textbf{I}_2^k\textbf{u}_1^{k}, \nu_2 \partial_{\textbf{n}_2}\textbf{e}_2^{k} \big\rangle_{\Gamma}
 - \gamma \big\langle \textbf{I}_1^k \textbf{u}_2^{k} - \textbf{u}_2^{k}, \textbf{e}_1^{k} \big\rangle_{\Gamma}\\
 &\qquad + \gamma \big\langle \textbf{u}_1^{k} - \textbf{I}_2^k\textbf{u}_1^{k}, \textbf{e}_2^{k} \big\rangle_{\Gamma} \Big\} \diff t = 0. 
    \end{aligned}
\label{interface_zero}
\end{equation}
To explain that, let us look at the integral 
$
    \int_{I^n} \big\langle \textbf{u}_1^{k} - I_2^k\textbf{u}_1^{k}, \textbf{e}_2^{k} \big\rangle_{\Gamma} \diff t.
$
    From the construction of our time meshes, we have two possibilities. According to the first one, there is no micro time-stepping in the domain $\Omega_1$, in other words, $N^n_1 = 1$ and $\textbf{I}_1^k \big|_{I^n} = \bar{\textbf{I}}^k \big|_{I^n}$. Then $\textbf{I}_2^k\textbf{u}_1^{k} = \textbf{u}_1^{k}$ and therefore $ \big\langle \textbf{u}_1^{k} - \textbf{I}_2^k\textbf{u}_1^{k}, \textbf{e}_2^{k} \big\rangle_{\Gamma} = 0$. Otherwise, we have no micro time-stepping in the domain $\Omega_2$ ($N^n_2 = 1$ and $\textbf{I}_2^k \big|_{I^n} = \bar{\textbf{I}}^k \big|_{I^n}$). In this case, knowing that the test function $\textbf{e}_2^k$ is a continuous constant over the interval $I^n$ and using the property (\ref{average_zero}) of the projection operator $\textbf{I}_2^k$, we can write
\begin{equation*}
    \int_{I^n} \big\langle \textbf{u}_1^{k} - \textbf{I}_2^k\textbf{u}_1^{k}, \textbf{e}_2^{k} \big\rangle_{\Gamma} \diff t = \left\langle \int_{I^n} (\textbf{u}_1^{k} - \textbf{I}_2^k\textbf{u}_1^{k}) \diff t , \textbf{e}_2^{k} \right\rangle_{\Gamma} = 0.
\end{equation*}
The reasoning corresponding to the remaining terms in~(\ref{interface_zero}) is analogous. Let us look at the unresolved interface term.

Next, we exploit the weak coupling conditions and transform \emph{(iii)} as
\begin{equation*}
     \int_{I^n} \frac{1}{2}\left\langle \nu_1 \partial_{\textbf{n}_1} \textbf{u}_1^k - \nu_2 \partial_{\textbf{n}_2} \textbf{u}_2^k, \textbf{e}_2^k - \textbf{e}_1^k\right\rangle_{\Gamma} \diff t= \int_{I^n} \frac{1}{2}\left\langle \nu_1 \partial_{\textbf{n}_1} \textbf{u}_1^k - \nu_2 \partial_{\textbf{n}_2} \textbf{u}_2^k, \textbf{i}_1^k \textbf{u}_1 - \textbf{i}_2^k \textbf{u}_2 \right\rangle_{\Gamma} \diff t 
\end{equation*}
We will again look closely at the implications of our time mesh structure. Because of the symmetry of this expression, without loss of generality, we can assume that $N^n_1 = 1$ and $\textbf{I}_1^k \big|_{I^n} = \bar{\textbf{I}}^k \big|_{I^n}$. The use of both the weak~(\ref{weak_coupling_heat}) and the strong~(\ref{coupling_conditions}) coupling conditions leads us to 
\begin{multline*}
  \int_{I^n} \frac{1}{2}\left\langle \nu_1 \partial_{\textbf{n}_1} \textbf{u}_1^k - \nu_2 \partial_{\textbf{n}_2} \textbf{u}_2^k, \textbf{i}_1^k \textbf{u}_1 - \textbf{i}_2^k \textbf{u}_2 \right\rangle_{\Gamma} \diff t\\
  = \left\langle \nu_1 \partial_{\textbf{n}_1} \textbf{u}_1^k ,\int_{I^n}( \textbf{i}_1^k \textbf{u}_1 - \textbf{u}_1) \diff t \right\rangle_{\Gamma} + \left\langle \nu_1 \partial_{\textbf{n}_1} \textbf{u}_1^k , \int_{I^n}(\textbf{u}_2 - \textbf{i}_2^k \textbf{u}_2 ) \diff t \right\rangle_{\Gamma} = 0.
\end{multline*}
The left side of the identity~(\ref{heat_orthogonal}), with the help of the strong coupling conditions, is equal to 
\begin{multline*}
    a(\textbf{u}, \textbf{e}^k) - \int_{I} \Big\{\big(\textbf{d}_t^k \textbf{i}^k \textbf{u}, \textbf{e}^k\big)_{\Omega} + \boldsymbol{\nu} \big(\nabla \textbf{i}^k \textbf{u}, \nabla{\textbf{e}^k} \big)_{\Omega} \Big\} \diff t \\ 
    = \int_{I} \Big\{\big(\textbf{d}_t \textbf{u} - \textbf{d}_t^k \textbf{i}^k \textbf{u}, \textbf{e}^k \big)_{\Omega} + \boldsymbol{\nu} \big(\nabla (\textbf{u} -  \textbf{i}^k \textbf{u}), \nabla \textbf{e}^k \big)_{\Omega}  + \frac{1}{2}\left\langle \nu_1 \partial_{\textbf{n}_1} \textbf{u}_1 - \nu_2 \partial_{\textbf{n}_2} \textbf{u}_2, \textbf{e}_2^k - \textbf{e}_1^k\right\rangle_{\Gamma} \Big\} \diff t
\end{multline*}
While analyzing the remaining terms~\emph{(iv)} in~(\ref{heat_orthogonal}), we will concentrate on the error $\textbf{e}_1^k$ since the estimations corresponding to the second error are very similar. Therefore, we will consider a single interval $I^{n,m}_1$. Let us start with the time discretization error

\begin{equation*}
    \int_{I^{n,m}_1} \textbf{d}_t \textbf{u}_1 \diff t = \textbf{u}_1(t^{n,m}_1) - \textbf{u}_1(t^{n, m-1}_1) = \int_{I^{n,m}_1} \textbf{d}_t^k \textbf{i}_1^k \textbf{u}_1 \diff t.
\end{equation*}
As a result, we have
\begin{equation}
    \int_{I^{n,m}_1} \big(\textbf{d}_t \textbf{u} - \textbf{d}_t^k \textbf{i}_1^k \textbf{u}, \textbf{e}^k \big)_{\Omega} \diff t = \textbf{0}.
    \label{time_equivalence}
\end{equation}

We will now examine the Laplacian terms~\emph{(iv)} on the right side of~(\ref{heat_orthogonal}). Knowing that the error $\textbf{e}_1^k$ is constant in time on every interval $I^{n,m}_1$, we have
\begin{multline}
     \left| \int_{I^{n, m}_1} \nu_1 \left(\nabla (\textbf{u}_1- \textbf{i}_1^k \textbf{u}_1), \nabla \textbf{e}_1^k \right)_{\Omega_1} \diff t \right|
      =    \left| \int_{I^{n, m}_1} \int_{t}^{t^{n,m}_1} \nu_1 \left(\textbf{d}_t \nabla \textbf{u}_1(s), \nabla \textbf{e}_1^k(t^{n,m}_1) \right)_{\Omega_1} \diff s \diff t \right| \\
      \leq
 c(k^{n, m}_1)^3\max_{t \in I}  \left| \left|\textbf{d}_t \nabla \textbf{u}_1 \right| \right|^2_{\Omega_1} + \frac{1}{8} \nu_1^2 \int_{I^{n, m}_1} \left| \left|
     \nabla \textbf{e}_1^k(t^{n, m}_1)\right|\right|^2_{\Omega_1} \diff t.
    \label{laplace_time}
\end{multline}
The remaining time discretization term in~(\ref{heat_orthogonal}) can be rewritten as
\begin{equation}
    \int_{I^{n,m}_1} \left(\textbf{d}_t^k \textbf{e}_1^k, \textbf{e}_1^k \right)_{\Omega_1} \diff t = \frac{1}{2} \left|\left|\textbf{e}_1^k(t^{n,m}_1)\right|\right|^2_{\Omega_1} - \frac{1}{2} \left|\left|\textbf{e}_1^k(t^{n , m - 1}_1)\right|\right|^2_{\Omega_1}  + \frac{1}{2}\left|\left|\textbf{e}_1^k(t^{n, m}_1) - \textbf{e}_1^k(t^{n,m - 1}_1)\right|\right|^2_{\Omega_1}.
\label{time_identity}
\end{equation}
Summing these terms over the whole time interval, we obtain
\begin{flalign*}
    \int_{I} \left(\textbf{d}_t^k \textbf{e}_1^k, \textbf{e}_1^k \right)_{\Omega_1} \diff t & = \frac{1}{2} \left|\left|\textbf{e}_1^k(t^{N})\right|\right|^2_{\Omega_1} + \frac{1}{2}\sum_{n = 1}^{N}\sum_{m = 1}^{N_1^n}\left|\left|\textbf{e}_1^k(t^{n, m}_1) - \textbf{e}_1^k(t^{n,m - 1}_1)\right|\right|^2_{\Omega_1}
\end{flalign*}
and therefore 
\begin{equation*}
      \big|\big|\textbf{e}^k(t^N)\big|\big|_{\Omega}^2 \leq \int_I 2\left(\textbf{d}_t^k \textbf{e}^k, \textbf{e}^k \right)_{\Omega} \diff t.
\end{equation*}
We proceed to the last interface term on the right side of~(\ref{semi_discrete_galerkin}). Implementing very similar solutions as in the analysis of the previous interface terms, we can show that 
\begin{equation}
\begin{aligned}
  \int_{I^n} \frac{1}{2}&\left\langle \nu_1 \partial_{\textbf{n}_1} \textbf{u}_1 - \nu_2 \partial_{\textbf{n}_2} \textbf{u}_2, \textbf{e}_2^k - \textbf{e}_1^k\right\rangle_{\Gamma} \diff t\\
  &= \int_{I^n} \frac{1}{2}\left\langle \nu_1 \partial_{\textbf{n}_1} (\textbf{u}_1 - \textbf{I}^k_1 \textbf{u}_1 ) - \nu_2 \partial_{\textbf{n}_2} (\textbf{u}_2 - \textbf{I}^k_2 \textbf{u}_2 ), \textbf{e}_2^k - \textbf{e}_1^k\right\rangle_{\Gamma} \diff t \\  
     & \leq \int_{I^n} \frac{1}{2} \left|\left| \nu_1 \partial_{\textbf{n}_1} (\textbf{u}_1 - \textbf{I}^k_1 \textbf{u}_1 ) - \nu_2 \partial_{\textbf{n}_2} (\textbf{u}_2 - \textbf{I}^k_2 \textbf{u}_2 )\right|\right|_{\Gamma}\left|\left|  \textbf{e}_2^k - \textbf{e}_1^k\right|\right|_{\Gamma} \diff t .
\end{aligned}
\label{neumann_heat}
\end{equation}

The first term in~(\ref{neumann_heat}) can be estimated using the fundamental theorem of calculus, from which follows
\begin{equation*}
    \begin{aligned}
    & \nu_j\left|\left|\partial_{\textbf{n}_j} (\textbf{u}_j - \textbf{I}^k_j \textbf{u}_j )\right|\right|_{\Gamma} \leq \nu_j k^{n,m}_j \max_{t \in I}\left|\left|\textbf{d}_t \partial_{\textbf{n}_j}\textbf{u}_j \right|\right|_{\Gamma}, \quad j=1,2.
    \end{aligned}
\end{equation*}
The other term can be dealt with by using the trace inequality
\begin{equation*}
    \begin{aligned}
    \left|\left| \textbf{e}_2^k - \textbf{e}_1^k\right|\right|_{\Gamma} \leq \left|\left| \textbf{e}_1^k\right|\right|_{\Gamma} + \left|\left| \textbf{e}_2^k\right|\right|_{\Gamma} \leq c_1\left|\left| \nabla \textbf{e}_1^k\right|\right|_{\Omega_1} + c_2\left|\left| \nabla \textbf{e}_2^k\right|\right|_{\Omega_2}.
    \end{aligned}
\end{equation*}
We can combine these estimates using the Young and Poincar\'e inequalities
\begin{multline}
  \left|\int_{I^n} \frac{1}{2}\left\langle \nu_1 \partial_{\textbf{n}_1} \textbf{u}_1 - \nu_2 \partial_{\textbf{n}_2} \textbf{u}_2, \textbf{e}_2^k - \textbf{e}_1^k\right\rangle_{\Gamma}\diff t \right| \\
        \leq \sum_{j=1}^2 \sum_{m = 1}^{N^n_j}\left\{ c_j  (k^{n, m}_j)^3  \max_{t \in I} \left|\left|\textbf{d}_t \partial_{\textbf{n}_j}\textbf{u}_j \right|\right|_{\Gamma}^2 + \frac{1}{8}\nu_j^2\int_{I^{n,m}_j}\left|\left| \nabla \textbf{e}_j^k(t^{n, m}_j) \right|\right|^2_{\Omega_1} \diff t\right\} 
    \label{interface_time}
\end{multline}
If we perform the same steps for the solution $\textbf{u}_2$ and sum these terms over the whole interval $I$ we will get the final result.
\end{proof}
In a more compact way, we just proved that
\begin{equation*}
    \begin{aligned}
    \big| \big| \textbf{e}^k(t^N)\big|\big| + \int_I \boldsymbol{\nu} \big| \big| \textbf{e}^k\big|\big| \diff t & = \mathcal{O}\left(k_1\left| \left| \textbf{d}_t \nabla \textbf{u}_1 \right| \right|_{\Omega_1} \right) + \mathcal{O}\left(k_1 \left|\left|\textbf{d}_t \partial_{\textbf{n}_1}\textbf{u}_1\right|\right|_{\Gamma} \right) \\ & + \mathcal{O}\left(k_2\left| \left| \textbf{d}_t \nabla \textbf{u}_2 \right| \right|_{\Omega_2}\right) + \mathcal{O}\left(k_2 \left|\left|\textbf{d}_t \partial_{\textbf{n}_2}\textbf{u}_2\right|\right|_{\Gamma} \right).
    \end{aligned}
\end{equation*}
The convergence is linear which is expected for the implicit Euler method. Here, we were able to fully decouple the system. Due to the interface coupling conditions, we were able to avoid interdependencies between contributions from different time discretizations. 
%Also here, we were not able to fully decouple the system. It is however coupled on the interface only in terms of   $\left|\left|\textbf{d}_t \partial_{\vec{\textbf{n}}_1}\textbf{u}_1\right|\right|_{\Gamma}$ and $\left|\left|\textbf{d}_t \partial_{\vec{\textbf{n}}_2}\textbf{u}_2\right|\right|_{\Gamma} $. The time derivatives of the gradients $\left| \left| \textbf{d}_t \nabla \textbf{u}_1 \right| \right|_{\Omega_1}$, $\left| \left| \textbf{d}_t \nabla \textbf{u}_2 \right| \right|_{\Omega_2}$ are independent.
%It is in line with the intuition indicating that the interdependencies happen only between interface terms.

%----------------------------------------------------------------------------------------------
\subsection{Discretization in Time and Space}

 To discretize the problem in space, we introduce regular triangulations $\mathcal{T}^h_1$ and $\mathcal{T}^h_2$. We assume that they match across the interface $\Gamma$. By $K_1$ we denote an element of the mesh $\mathcal{T}^h_1$ and by $K_2$ an element of $\mathcal{T}^h_2$. Their sizes are denoted by $h_1^K$ and $h_2^K$, respectively. Further, 
\begin{equation*}
    h_1 \coloneqq \max_{K_1 \in \mathcal{T}^h_1} h_1^K, \hspace*{0.5 cm} h_2 \coloneqq \max_{K_2 \in \mathcal{T}^h_2} h_2^K, \hspace*{0.5 cm} h \coloneqq \max \left\{ h_1, h_2\right\}.
\end{equation*}
As function spaces, we take the space of continuous polynomials of order $r$ for $j=1,2$
\begin{equation*}
  X^{k, h}_j(r) ={} \big\{ \boldsymbol{\varphi} \in X_j^k\Big|\; \varphi|_{K_j} 
  \in {\cal P}_r({\Omega_j}) \text{ for all }
  K_j \in \mathcal{T}^h_j \text{ and } \varphi|_{\partial \Omega_1 \backslash \Gamma}  = \textbf{0} \big\},
\end{equation*}
and introduce $X^{k,h}(r) = X^{k,h}_1(r)\times X^{k,h}_2(r)$. We similarly define the test space
\begin{align*}
    Y^{k, h}(r) \coloneqq {} \big\{ & \boldsymbol{\varphi} \in Y^k\Big|\; \varphi|_{\Omega_1} 
      \in X^{k,h}_1(r) \text{ and } \varphi|_{\Omega_2} 
      \in X^{k, h}_2(r) \big\}.
\end{align*}
We will introduce a Ritz projection operator. To ensure continuity over the interface, we will define it over the space $Y^{k, h}(r)$ instead of $X^{k,h}(r)$. As a consequence, we take $$\textbf{R}^h \textbf{u} = (\textbf{R}^h_1 \textbf{u}_1, \textbf{R}^h_2 \textbf{u}_2)^T \in (Y^{k,h}(r))^d$$ defined by
\begin{equation}
    \big(\nabla \textbf{R}^h \textbf{u}, \nabla \boldsymbol{\varphi}^{k,h}\big)_{\Omega} = \big(\nabla \textbf{u}, \nabla \boldsymbol{\varphi}^{k,h}\big)_{\Omega} \hspace*{0.3 cm}
    \textnormal{ for all } \hspace*{0.3 cm} \boldsymbol{\varphi}^{k,h} \in (Y^{k,h}(r))^d.
    \label{ritz}
\end{equation}
Again, since the Ritz projection operator is imposed on a function space consisting only of functions continuous across the interface, we have
\begin{equation}
    \textbf{R}_1^h\textbf{u}_1 \Big|_{\Gamma} = \textbf{R}_2^h\textbf{u}_2 \Big|_{\Gamma}.
    \label{ritz_continuity}
\end{equation}
We will now list some of the useful properties of the Ritz operator.

\begin{corollary}
  Given $\textbf{u} \in X(H^1_0(\Omega))^d$, $\textbf{u}_j^k \in X(H^{r +1}(\Omega_j))^d$ for $j=1,2$, the Ritz projection operator defined by~(\ref{ritz}) has the following properties:
   \label{ritz_properties} 
  \begin{enumerate}[label=(\roman*)]
\item
 \begin{equation} 
    \big| \big| \nabla^k(\textbf{u} - \textbf{R}^h \textbf{u})\big|\big|_{\Omega}  \leq c h^{r+1-k}\sum_{j=1}^2 \big|\big| \nabla^{r+1} \textbf{u}_j\big|\big|_{\Omega_j} \label{ritz_l2}\text{ for }k=0,1.
   \end{equation}
  \item 
  \begin{equation}
    \sum_{j=1}^2
    \big| \big|\nabla( \textbf{u}_j - \textbf{R}_j^h \textbf{u}_j) \cdot \textbf{n}_j\big|\big|_{\Gamma}
    \leq c \sum_{j=1}^2 h^{r - \frac{1}{2}} \big|\big| \nabla^{r+1} \textbf{u}_j\big|\big|_{\Omega_j}  \label{ritz_interface}
  \end{equation}
  \end{enumerate}
\end{corollary}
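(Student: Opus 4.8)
The plan is to treat the two items separately, after observing that $\textbf{R}^h$ is nothing but the standard elliptic (Ritz) projection for the bilinear form $(\nabla\cdot,\nabla\cdot)_\Omega$ on the space $(Y^{k,h}(r))^d$ of globally $H^1$-conforming, piecewise polynomial functions vanishing on the outer boundary $\partial\Omega$. Because these functions are continuous across $\Gamma$ and vanish on $\partial\Omega$, the seminorm $\|\nabla\cdot\|_\Omega$ is a norm by the Poincaré inequality, so the defining relation~(\ref{ritz}) is coercive and $\textbf{R}^h\textbf{u}$ is well defined. Item~(\ref{ritz_l2}) is then the classical C\'ea/Aubin--Nitsche argument, while~(\ref{ritz_interface}) follows by combining the interior estimates of~(i) with a discrete trace inequality. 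Throughout I write $\boldsymbol{\Pi}_h$ for the nodal (Lagrange) interpolant onto $(Y^{k,h}(r))^d$.

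For item~(i) with $k=1$ I would invoke Galerkin orthogonality $\big(\nabla(\textbf{u}-\textbf{R}^h\textbf{u}),\nabla\boldsymbol{\varphi}^{k,h}\big)_\Omega=0$ for all $\boldsymbol{\varphi}^{k,h}\in(Y^{k,h}(r))^d$, which makes $\textbf{R}^h\textbf{u}$ the best approximation in the energy norm, so that $\|\nabla(\textbf{u}-\textbf{R}^h\textbf{u})\|_\Omega\le\|\nabla(\textbf{u}-\boldsymbol{\Pi}_h\textbf{u})\|_\Omega$. Since the meshes $\mathcal{T}^h_1,\mathcal{T}^h_2$ match across $\Gamma$, the interpolant $\boldsymbol{\Pi}_h\textbf{u}$ preserves interface continuity and is therefore admissible; inserting the piecewise interpolation estimate $\|\nabla(\textbf{u}_j-\boldsymbol{\Pi}_h\textbf{u}_j)\|_{\Omega_j}\le c\,h^{r}\|\nabla^{r+1}\textbf{u}_j\|_{\Omega_j}$ yields the $k=1$ bound. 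For $k=0$ I would run an Aubin--Nitsche duality: let $\textbf{z}$ solve the transmission problem $-\Delta\textbf{z}=\textbf{u}-\textbf{R}^h\textbf{u}$ with homogeneous outer boundary condition and natural interface continuity, test $\|\textbf{u}-\textbf{R}^h\textbf{u}\|_\Omega^2=(\nabla\textbf{z},\nabla(\textbf{u}-\textbf{R}^h\textbf{u}))_\Omega$, subtract $\boldsymbol{\Pi}_h\textbf{z}$ using orthogonality, and close the estimate with piecewise elliptic regularity $\sum_j\|\textbf{z}_j\|_{H^2(\Omega_j)}\le c\|\textbf{u}-\textbf{R}^h\textbf{u}\|_\Omega$ together with the $k=1$ bound to gain the extra power of $h$.

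For item~(ii) I would set $\textbf{w}_j:=\textbf{u}_j-\textbf{R}_j^h\textbf{u}_j$ and split $\nabla\textbf{w}_j=\nabla(\textbf{u}_j-\boldsymbol{\Pi}_h\textbf{u}_j)+\nabla(\boldsymbol{\Pi}_h\textbf{u}_j-\textbf{R}_j^h\textbf{u}_j)$. The first summand is bounded directly on $\Gamma$ by the trace interpolation estimate $\|\nabla(\textbf{u}_j-\boldsymbol{\Pi}_h\textbf{u}_j)\|_\Gamma\le c\,h^{r-\frac12}\|\nabla^{r+1}\textbf{u}_j\|_{\Omega_j}$. The second summand is a piecewise polynomial, so I would apply the discrete (inverse) trace inequality $\|v_h\|_{\Gamma\cap K}\le c\,(h_j^K)^{-1/2}\|v_h\|_{K}$ elementwise to $v_h=\nabla(\boldsymbol{\Pi}_h\textbf{u}_j-\textbf{R}_j^h\textbf{u}_j)$, and then bound $\|\nabla(\boldsymbol{\Pi}_h\textbf{u}_j-\textbf{R}_j^h\textbf{u}_j)\|_{\Omega_j}$ by the triangle inequality through the interpolation error and the energy estimate from~(i), both of order $h^{r}$. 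Taking the normal component only decreases the norm, and summing over the boundary elements and over $j$ produces the factor $h^{r-\frac12}$ claimed in~(\ref{ritz_interface}).

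The main obstacles are structural rather than computational. First, I must be sure that $\boldsymbol{\Pi}_h\textbf{u}$ is genuinely admissible, i.e. lies in $(Y^{k,h}(r))^d$; this is exactly where the matching-mesh assumption across $\Gamma$ and property~(\ref{ritz_continuity}) enter, since otherwise the interpolant need not be continuous on the interface and C\'ea's argument would break down. Second, the $k=0$ bound relies on elliptic regularity for the dual transmission problem on the split domain $\Omega=\bar\Omega_1\cup\bar\Omega_2$, where only piecewise $H^2$ regularity is available; this is sufficient here but should be recorded as a standing assumption on $\Omega$ and $\Gamma$. The genuinely delicate step is~(ii): the half-power loss $h^{r-\frac12}$ is sharp and is produced precisely by the inverse trace inequality, so the polynomial part and the interpolation part must be kept separate and the inverse estimate must never be applied to $\textbf{u}_j$ itself.
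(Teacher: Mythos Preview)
The paper does not actually prove this corollary; it is stated without proof as a collection of standard Ritz-projection properties, with only the heading ``We will now list some of the useful properties of the Ritz operator.'' Your argument is therefore not competing against anything in the paper, and what you have written is the canonical route one would take to justify such a result: C\'ea's lemma for the $k=1$ energy estimate, an Aubin--Nitsche duality for the $k=0$ bound, and for the trace estimate the usual splitting into an interpolation part (handled by the continuous trace/interpolation estimate) and a discrete part (handled by the elementwise inverse trace inequality). Your identification of the places where the matching-mesh assumption and the piecewise $H^2$ regularity of the dual transmission problem enter is accurate, and your warning not to apply the inverse inequality to the continuous function $\textbf{u}_j$ is exactly the point where a careless argument would fail.
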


Then, our variational problem is given by 
\begin{equation}
\begin{aligned}
& a^{k, h}(\textbf{u}^{k,h}, \boldsymbol{\varphi}^{k,h}) =  \int_{I} \Big\{\big(\textbf{d}_t^k \textbf{u}^{k,h}, \boldsymbol{\varphi}^{k,h}\big)_{\Omega} + \boldsymbol{\nu}\left(\nabla \textbf{u}^{k,h}, \nabla \boldsymbol{\varphi}^{k,h}\right)_{\Omega} \\ & \quad 
- \frac{1}{2} \left\langle \nu_1 \partial_{\textbf{n}_1} \textbf{u}_1^{k,h} - \nu_2 \partial_{\textbf{n}_2} \textbf{I}_1^k \textbf{u}_2^{k,h}, \boldsymbol{\varphi}_1^{k,h} \right\rangle_{\Gamma} 
+ \frac{1}{2} \left\langle \nu_1 \partial_{\textbf{n}_1} \textbf{I}_2^k \textbf{u}_1^{k,h} - \nu_2 \partial_{\textbf{n}_2} \textbf{u}_2^{k,h}, \boldsymbol{\varphi}_2^{k,h} \right\rangle_{\Gamma}
\\ & \quad - \frac{1}{2} \left\langle \textbf{I}_1^k \textbf{u}_2^{k,h} - \textbf{u}_1^{k,h}, \nu_1 \partial_{\textbf{n}_1}\boldsymbol{\varphi}_1^{k,h} \right\rangle_{\Gamma}
 + \frac{1}{2} \left\langle \textbf{u}_2^{k,h} - \textbf{I}_2^k\textbf{u}_1^{k,h}, \nu_2 \partial_{\textbf{n}_2}\boldsymbol{\varphi}_2^{k,h} \right\rangle_{\Gamma}
\\ & \quad - \frac{\gamma}{h} \left\langle \textbf{I}_1^k \textbf{u}_2^{k,h} - \textbf{u}_1^{k,h}, \boldsymbol{\varphi}_1^{k,h} \right\rangle_{\Gamma}
  + \frac{\gamma}{h} \left\langle \textbf{u}_2^{k,h} -\textbf{I}_2^k\textbf{u}_1^{k,h}, \boldsymbol{\varphi}_2^{k,h} \right\rangle_{\Gamma}
\Big\}\diff t 
= \int_{I} \left(\textbf{f}, \boldsymbol{\varphi}^{k} \right)_{\Omega} \diff t.
\end{aligned}
\label{fully_discrete_heat}
\end{equation}
Moreover, we introduce a new norm
\begin{equation*}
    \left| \left| \left| \textbf{u} \right| \right| \right|_G \coloneqq \Big( \boldsymbol{\nu}^2\left|\left|\nabla \textbf{u}\right|\right|_G^2 + \frac{\gamma}{h} \left| \left| \textbf{u}_2 - \textbf{u}_1 \right| \right|_{\Gamma}^2 \Big)^{\frac{1}{2}}, 
\end{equation*}
where we can substitute $G$ with $\Omega_1$, $\Omega_2$, or $\Omega$ and use the appropriate component of $\boldsymbol{\nu}$.
We proceed with the error estimation for the fully discrete case.
\begin{theorem}
Let $\textbf{u} \in X(H^1_0(\Omega))^d$, $\textbf{u}_j \in W^{1, \infty}(H^{r + 1}(\Omega_j))^d$ for $j=1,2$ be continuous solutions to~(\ref{variational_heat_continuous_symmetric}) and $\textbf{u}^k~\in~(X^k)^d$ their discrete counterpart and a solution to~(\ref{fully_discrete_heat}), then the following estimate holds
%
%Let us consider $\textbf{u}^{k,h} \in (X^{k,h}(r))^d$ with the continuous (\ref{variational_heat_continuous_symmetric}) and the fully discrete (\ref{fully_discrete_heat}) formulations, then the following estimate holds
\begin{multline*}
  \big| \big| \textbf{e}^{k,h}(t^N)\big|\big|^2_{\Omega}   + \int_I \big|\big|\big|\textbf{e}^{k,h} \big|\big|\big|^2_{\Omega} \diff t\\
  \leq C\sum_{j=1}^2\sum_{n = 1}^{N}\sum_{m = 1}^{N^n_j}  \Bigg\{(k^{n, m}_j)^3 \max_{t \in I} \left| \left| \textbf{d}_t \nabla \textbf{u}_j \right| \right|^2_{\Omega_j}   + (k^{n, m}_j)^3h \max_{t \in I} \left|\left|\textbf{d}_t \partial_{\textbf{n}_j}\textbf{u}_1\right|\right|_{\Gamma}^2
  \\
 + k^{n,m}_j  h^{2r + 2}  \max_{t \in I} \Big|\Big| \textbf{d}_t \nabla^{r + 1} \textbf{u}_j \Big| \Big|^2_{\Omega_j} + k^{n,m}_j h^{2r} \Big|\Big| \nabla^{r + 1} \textbf{u}_j(t^{n,m}_j) \Big| \Big|^2_{\Omega_j}
\Bigg\},
\end{multline*}
where the errors $\textbf{e}^{k,h} = (\textbf{e}_1^{k,h}, \textbf{e}_2^{k,h})^T$ are defined as
$\textbf{e}_j^{k,h} \coloneqq \textbf{u}_j^{k,h} - \textbf{i}_j^k \textbf{R}_j^h \textbf{u}_j$.
\label{theorem_discrete_heat}
\end{theorem}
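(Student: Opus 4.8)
The plan is to mirror the structure of the proof of Theorem~\ref{theorem_semidiscrete_heat}, now measuring the error against the composed projection $\textbf{i}^k\textbf{R}^h\textbf{u}$ instead of $\textbf{i}^k\textbf{u}$ and exploiting the defining property~(\ref{ritz}) of the Ritz operator. First I would set up Galerkin orthogonality: since the exact solution satisfies the interface conditions, the added Nitsche and symmetry terms in~(\ref{fully_discrete_heat}) vanish on $\textbf{u}$ irrespective of the factor $\gamma/h$, so $a^{k,h}(\textbf{u}^{k,h}-\textbf{u},\boldsymbol{\varphi}^{k,h})=0$ for every $\boldsymbol{\varphi}^{k,h}\in(Y^{k,h}(r))^d$. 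Testing with $\boldsymbol{\varphi}^{k,h}=\textbf{e}^{k,h}$ and adding and subtracting $\textbf{i}^k\textbf{R}^h\textbf{u}$ exactly as on the left of~(\ref{semi_discrete_galerkin}) splits the identity into a coercive part in $\textbf{e}^{k,h}$ and a consistency part driven by the projection error $\textbf{u}-\textbf{i}^k\textbf{R}^h\textbf{u}$.

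I would next dispose of the coupling interface terms precisely as before. The groups labelled $(i)$, $(ii)$, $(iii)$ in~(\ref{heat_orthogonal}) vanish by the same arguments: $(i)$ by the weak Dirichlet coupling in~(\ref{weak_coupling_heat}); $(ii)$ on each macro step by the average-zero property~(\ref{average_zero}) together with the piecewise-constancy in time of $\textbf{e}^{k,h}$; and $(iii)$ by the weak and strong coupling conditions. The continuity~(\ref{ritz_continuity}) of the Ritz projection across $\Gamma$ is essential here, so that $\textbf{i}^k\textbf{R}^h\textbf{u}$ still matches across the interface and these cancellations carry over unchanged.

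The genuinely new work lies on the coercive (left-hand) side, where I expect the main obstacle. The time-derivative contribution reproduces~(\ref{time_identity}), giving $\tfrac12\|\textbf{e}^{k,h}(t^N)\|^2_\Omega$ plus nonnegative jumps, while the diffusion and Nitsche parts assemble into $\int_I |||\textbf{e}^{k,h}|||^2_\Omega\diff t$. The symmetry stress terms $\tfrac12\langle \nu_j\partial_{\textbf{n}_j}\textbf{e}^{k,h}_j,\textbf{e}^{k,h}_2-\textbf{e}^{k,h}_1\rangle_\Gamma$ no longer cancel and must be absorbed; I would bound them through the discrete inverse trace inequality $\|\partial_{\textbf{n}_j}\boldsymbol{\varphi}^{k,h}\|_\Gamma\le c\,h^{-1/2}\|\nabla\boldsymbol{\varphi}^{k,h}\|_{\Omega_j}$ followed by Young's inequality against $\tfrac{\gamma}{h}\|\textbf{e}^{k,h}_2-\textbf{e}^{k,h}_1\|^2_\Gamma$, which forces a lower bound on $\gamma$. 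The upshot is the discrete coercivity
\[
a^{k,h}(\textbf{e}^{k,h},\textbf{e}^{k,h}) \ge \tfrac12\|\textbf{e}^{k,h}(t^N)\|^2_\Omega + c\int_I |||\textbf{e}^{k,h}|||^2_\Omega\,\diff t,
\]
whose verification with the correct power of $h$ is the crux of the argument.

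Finally I would estimate the consistency right-hand side by splitting $\textbf{u}-\textbf{i}^k\textbf{R}^h\textbf{u}=(\textbf{u}-\textbf{R}^h\textbf{u})+(\textbf{R}^h\textbf{u}-\textbf{i}^k\textbf{R}^h\textbf{u})$, using that $\textbf{i}^k$ and $\textbf{R}^h$ act on separate variables. In the diffusion term the spatial part is largely removed by the orthogonality~(\ref{ritz}), the residual stemming from the interface discontinuity of $\textbf{e}^{k,h}$ being controlled by Corollary~\ref{ritz_properties}; the temporal part is handled by the fundamental-theorem-of-calculus estimate~(\ref{laplace_time}), producing the $(k^{n,m}_j)^3\|\textbf{d}_t\nabla\textbf{u}_j\|^2_{\Omega_j}$ term, while the mixed and purely spatial parts yield the $h^{2r+2}$ and $h^{2r}$ contributions via Corollary~\ref{ritz_properties}(\ref{ritz_l2}). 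The remaining Neumann interface term is treated as in~(\ref{neumann_heat}), but now the Young step is taken against the $\tfrac{\gamma}{h}$ jump norm, so the factor $h$ migrates onto the data and gives the $(k^{n,m}_j)^3 h\,\|\textbf{d}_t\partial_{\textbf{n}_j}\textbf{u}_j\|^2_\Gamma$ contribution, with the interface regularity supplied by Corollary~\ref{ritz_properties}(\ref{ritz_interface}). Summing over all micro- and macro-intervals and over $j=1,2$ then yields the stated estimate.
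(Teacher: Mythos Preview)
Your overall strategy is right, and you correctly flag the role of the Ritz continuity~(\ref{ritz_continuity}) on the right-hand side. But you misidentify the ``crux'' of the argument, and the route you sketch has a genuine gap on the left-hand side.

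First, the paper does \emph{not} subtract only the volume parts of $a^{k,h}(\textbf{i}^k\textbf{R}^h\textbf{u},\cdot)$ as in~(\ref{semi_discrete_galerkin}); it subtracts the full form, so the left side is $a^{k,h}(\textbf{e}^{k,h},\textbf{e}^{k,h})$ directly. Hence the terms analogous to $(i)$ and $(iii)$ in~(\ref{heat_orthogonal}) never arise on the left. This matters because your stated justification for their vanishing --- the weak Dirichlet coupling~(\ref{weak_coupling_heat}) --- is not available for the fully discrete solution: $\textbf{u}^{k,h}$ lives in $X^{k,h}$ with the Nitsche penalty and does \emph{not} satisfy $\int_{I^n}\langle\textbf{u}^{k,h}_2-\textbf{u}^{k,h}_1,\boldsymbol{\varphi}\rangle_\Gamma=0$ in general. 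So your argument for $(i)$ and $(iii)$ would fail.

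Second, your main worry --- that the stress terms $\tfrac12\langle\nu_j\partial_{\textbf{n}_j}\textbf{e}^{k,h}_j,\textbf{e}^{k,h}_2-\textbf{e}^{k,h}_1\rangle_\Gamma$ must be absorbed via an inverse trace inequality --- is unfounded. In the paper's formulation the Neumann and ``symmetrizing'' interface contributions carry \emph{opposite} signs, so on the diagonal $a^{k,h}(\textbf{e}^{k,h},\textbf{e}^{k,h})$ they cancel identically; only the $\textbf{I}^k$--mismatch residuals remain, and those vanish by~(\ref{average_zero}) exactly as in~(\ref{interface_zero}). Coercivity is therefore immediate, with no inverse inequality and no lower bound on $\gamma$ required at this step: the left side equals $\int_I\{(\textbf{d}_t^k\textbf{e}^{k,h},\textbf{e}^{k,h})_\Omega+|||\textbf{e}^{k,h}|||^2_\Omega\}\diff t$ on the nose.

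Your treatment of the consistency right-hand side is essentially correct and matches the paper, with one simplification you could take further: the Dirichlet-type interface terms (the symmetrizing and Nitsche contributions in~(\ref{discrete_estimation_heat})) are not merely estimated but \emph{vanish}, since by the strong coupling and the averaging argument they reduce to $\langle\textbf{R}^h_1\textbf{u}_1-\textbf{R}^h_2\textbf{u}_2,\cdot\rangle_\Gamma=0$ via~(\ref{ritz_continuity}); only the Neumann-type term survives and produces the $(k_j^{n,m})^3 h$ and $h^{2r}$ contributions through Corollary~\ref{ritz_properties}.
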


\begin{proof}
We start using Galerkin orthogonality
\begin{equation}
    a^k(\textbf{u}^{k,h}, \textbf{e}^{k,h}) - a^k(\textbf{i}^k\textbf{R}^h\textbf{u}, \textbf{e}^{k,h}) = a(\textbf{u}, \textbf{e}^{k,h}) - a^k(\textbf{i}^k\textbf{R}^h\textbf{u}, \textbf{e}^{k,h}).
    \label{discrete_heat_orthogonality}
\end{equation}
On the left side of this equation, using the symmetry of the interface terms, we have
\begin{multline*}
     a^k(\textbf{u}^{k,h}, \textbf{e}^{k,h}) - a^k(\textbf{i}^k\textbf{R}^h\textbf{u}, \textbf{e}^{k,h}) = \int_{I} \Big\{ \big(\textbf{d}_t^k \textbf{e}^{k,h},
    \textbf{e}^{k,h} \big)_{\Omega} +  \big| \big| \big| \textbf{e}^{k,h} \big| \big| \big|_{\Omega} \\
     - \frac{1}{2} \big\langle  \nu_2 \partial_{\textbf{n}_2} (\textbf{e}_2^{k,h} - \textbf{I}_1^k \textbf{e}_2^{k,h}), \textbf{e}_1^{k,h} \big\rangle_{\Gamma} 
+ \frac{1}{2} \big\langle \nu_1 \partial_{\textbf{n}_1}( \textbf{I}_2^k \textbf{e}_1^{k,h} - \textbf{e}_1^{k,h}), \textbf{e}_2^{k,h} \big\rangle_{\Gamma}
\\ - \frac{1}{2} \big\langle \textbf{I}_1^k \textbf{e}_2^{k,h} - \textbf{e}_2^{k,h}, \nu_1 \partial_{\textbf{n}_1}\textbf{e}_1^{k,h} \big\rangle_{\Gamma}
+ \frac{1}{2} \big\langle \textbf{e}_1^{k,h} - \textbf{I}_2^k\textbf{e}_1^{k,h}, \nu_2 \partial_{\textbf{n}_2}\textbf{e}_2^{k,h} \big\rangle_{\Gamma}\\
- \frac{\gamma}{h} \big\langle \textbf{I}_1^k \textbf{e}_2^{k,h} - \textbf{e}_2^{k,h}, \textbf{e}_1^{k,h} \big\rangle_{\Gamma}
 + \frac{\gamma}{h} \big\langle \textbf{e}_1^{k,h} - \textbf{I}_2^k\textbf{e}_1^{k,h}, \textbf{e}_2^{k,h} \big\rangle_{\Gamma}\Big\} \diff t. 
\end{multline*}
Carrying out identical reasoning as in the previous proof leads us to 
\begin{equation*}
    \begin{aligned}
    & a^k(\textbf{u}^{k,h}, \textbf{e}^{k,h}) - a^k(\textbf{i}^k\textbf{R}^h\textbf{u}, \textbf{e}^{k,h}) = \int_{I} \Big\{ \big(\textbf{d}_t^k \textbf{e}^{k,h},
    \textbf{e}^{k,h} \big)_{\Omega} +  \big| \big| \big| \textbf{e}^{k,h} \big| \big| \big|_{\Omega}\Big\} \diff t. 
    \end{aligned}
\end{equation*}
To explain the disappearance of the interface terms, we refer to equation~(\ref{interface_zero}). We continue by analyzing the right side of the orthogonality identity~(\ref{discrete_heat_orthogonality}). Here we already omit the unnecessary interface terms including the projection operators
\begin{equation}
\begin{aligned}
  & a(\textbf{u}, \textbf{e}^{k,h}) - a^k(\textbf{i}^k\textbf{R}^h\textbf{u}, \textbf{e}^{k,h})\\
  & \qquad =  \int_{I} \Big\{\Big[\left(\textbf{d}_t \textbf{u} - \textbf{d}_t^k\textbf{i}^k\textbf{R}^h\textbf{u}, \textbf{e}^{k,h}\right)_{\Omega}\Big]^{(i)}
  + \Big[\boldsymbol{\nu}\left(\nabla (\textbf{u} - \textbf{i}^k\textbf{R}^h\textbf{u}), \nabla \textbf{e}^{k,h}\right)_{\Omega}\Big]^{(ii)} \\
  & \qquad  + \Big[ \frac{1}{2}\left\langle \nu_1 \partial_{\textbf{n}_1} (\textbf{u}_1 - \textbf{I}^k_1\textbf{R}^h_1\textbf{u}_1) - \nu_2 \partial_{\textbf{n}_2} (\textbf{u}_2 - \textbf{I}^k_2\textbf{R}^h_2\textbf{u}_2), \textbf{e}_2^{k,h} - \textbf{e}_1^{k,h}\right\rangle_{\Gamma}\Big]^{(iii)} \\
  & \qquad \Big[- \frac{1}{2} \left\langle (\textbf{u}_2 - \textbf{I}^k_2\textbf{R}^h_2\textbf{u}_2) - (\textbf{u}_1 - \textbf{I}^k_1\textbf{R}^h_1\textbf{u}_1), \nu_1 \partial_{\textbf{n}_1} \textbf{e}_1^{k,h} - \nu_2 \partial_{\textbf{n}_2} \textbf{e}_2^{k,h}\right\rangle_{\Gamma} \\
  & \qquad + \frac{\gamma}{h}\left\langle (\textbf{u}_2 - \textbf{I}^k_2\textbf{R}^h_2\textbf{u}_2) - (\textbf{u}_1 - \textbf{I}^k_1\textbf{R}^h_1\textbf{u}_1), \textbf{e}_2^{k,h} - \textbf{e}_1^{k,h}\right\rangle_{\Gamma}\Big]^{(iv)}  \Big\}\diff t.
\end{aligned}
\label{discrete_estimation_heat}
\end{equation}
Starting with the time discretization error \emph{(i)}, we can split it into contributions coming from the time and space 
\begin{equation*}
    \int_{I} \Big\{\big(\textbf{d}_t \textbf{u} - \textbf{d}_t^k \textbf{i}^k \textbf{R}^h \textbf{u}, \textbf{e}^{k,h} \big)_{\Omega} \Big\} \diff t 
       = \int_{I} \Big\{\big(\textbf{d}_t \textbf{u} - \textbf{d}_t^k \textbf{i}^k
    \textbf{u}, \textbf{e}^{k,h} \big)_{\Omega} + \big(\textbf{d}_t^k \textbf{i}^k \textbf{u} - \textbf{d}_t^k \textbf{i}^k \textbf{R}^h \textbf{u}, \textbf{e}^{k,h} \big)_{\Omega}\Big\} \diff t.
\end{equation*}
The first term has already been examined in the previous proof and based on~(\ref{time_equivalence}) it is equal to zero. Estimation of the second term directly follows from property~\ref{ritz_l2} in Corollary~\ref{ritz_properties} with the help of the Young's and Poincar\'e's inequalities.
\begin{multline}
  \Bigg |\int_{I^{n,m}_1} \big(\textbf{d}_t^k i^k_1 \textbf{u}_1 - \textbf{d}_t^k i^k_1 \textbf{R}^h_1 \textbf{u}_1, \textbf{e}^{k,h}_1 \big)_{\Omega_1} \diff t \Bigg | \\  
  \leq
  k^{n,m}_1 \max_{t \in I} \Big|\Big|\textbf{d}_t(\textbf{u}_1 - \textbf{R}_1^h \textbf{u}_1) \Big| \Big|_{\Omega_1} \Big| \Big| \textbf{e}_1^{k,h}(t^{n,m}_1) \Big| \Big|_{\Omega_1}  
  \leq c k^{n,m}_1  h^{2r + 2}  \max_{t \in I} \Big|\Big| \textbf{d}_t \nabla^{r + 1} \textbf{u}_1 \Big| \Big|^2_{\Omega_1} \\
  +\frac{1}{8} \int_{I^{n,m}_1} \nu_1^2\Big| \Big| \nabla \textbf{e}_1^{k,h} \Big| \Big|^2_{\Omega_1} \diff t.
  \label{time_derivative_space}
\end{multline}
We similarly split the Laplacian term~\emph{(ii)} in~(\ref{discrete_estimation_heat})
\begin{equation*}
    \int_{I} \boldsymbol{\nu}\left(\nabla (\textbf{u} - \textbf{i}^k\textbf{R}^h\textbf{u}), \nabla \textbf{e}^{k,h}\right)_{\Omega}  \diff t 
   = \int_{I} \Big\{\boldsymbol{\nu}\left(\nabla (\textbf{u} - \textbf{i}^k\textbf{u}), \nabla \textbf{e}^{k,h}\right)_{\Omega}  + \boldsymbol{\nu}\left(\nabla (\textbf{i}^k \textbf{u} - \textbf{i}^k\textbf{R}^h\textbf{u}), \nabla \textbf{e}^{k,h}\right)_{\Omega} \Big\} \diff t.
\end{equation*}
The first term was estimated in the previous proof by~(\ref{laplace_time}). The second term in the identity above can be also estimated by relying on property property~\ref{ritz_l2} in Corollary~\ref{ritz_properties} 
\begin{equation}
    \Bigg|\int_{I^{n,m}_1}\nu_1 \big(\nabla ( \textbf{i}_1^k\textbf{u}_1 - \textbf{i}^k_1 \textbf{R}^h_1  \textbf{u}_1), \nabla \textbf{e}_1^{k,h} \big)_{\Omega_1}\diff t \Bigg| 
    \leq c k^{n,m}_1 h^{2r} \Big|\Big| \nabla^{r + 1} \textbf{u}_1(t^{n,m}_1) \Big| \Big|^2_{\Omega_1} + 
    \frac{1}{8} \int_{I^{n,m}_1} \nu_1^2 \Big| \Big| \nabla \textbf{e}_1^{k,h} \Big| \Big|^2_{\Omega_1} \diff t. 
    \label{laplacian_space}
\end{equation}
We proceed to the coupling conditions in~(\ref{discrete_estimation_heat}). We estimate the first one~\emph{(iii)} on each macro time-step $I^n$ 
\begin{equation*}
    \begin{aligned}
     & \Bigg|\int_{I^n} \frac{1}{2}\left\langle \nu_1 \partial_{\textbf{n}_1} (\textbf{u}_1 - \textbf{I}^k_1\textbf{R}^h_1\textbf{u}_1) - \nu_2 \partial_{\textbf{n}_2} (\textbf{u}_2 - \textbf{I}^k_2\textbf{R}^h_2\textbf{u}_2), \textbf{e}_2^{k,h} - \textbf{e}_1^{k,h}\right\rangle_{\Gamma} \diff t \Bigg| \\ & \quad \leq \int_{I^n} \Bigg\{\frac{h}{2\gamma} \Big|\Big|\nu_1 \partial_{\textbf{n}_1} (\textbf{u}_1 - \textbf{I}^k_1\textbf{R}^h_1\textbf{u}_1) - \nu_2 \partial_{\textbf{n}_2} (\textbf{u}_2 - \textbf{I}^k_2\textbf{R}^h_2\textbf{u}_2)\Big|\Big|_{\Gamma}^2 + \frac{\gamma}{8h} \left|\left|\textbf{e}_2^{k,h} - \textbf{e}_1^{k,h}\right|\right|_{\Gamma}^2 \Bigg\}\diff t.
    \end{aligned}
\end{equation*}
The normal derivatives are  estimated by splitting the errors similarly
\begin{multline}
  \int_{I^n}\Big|\Big|\nu_1 \partial_{\textbf{n}_1} (\textbf{u}_1 - I^k_1\textbf{R}^h_1\textbf{u}_1) - \nu_2 \partial_{\textbf{n}_2} (\textbf{u}_2 - I^k_2\textbf{R}^h_2\textbf{u}_2)\Big|\Big|_{\Gamma}^2 \diff t \\
  \leq \sum_{j=1}^2 \sum_{m = 1}^{N^n_j} c_jk^{n,m}_j(\nu_j)^2\Bigg\{(k^{n,m}_j)^2\max_{t \in I} \left|\left|\textbf{d}_t \partial_{\textbf{n}_1}\textbf{u}_j\right|\right|_{\Gamma}^2 
  +  h^{2r- 1}\left|\left| \nabla^{r+1}\textbf{u}_j(t^{n,m}_j)\right|\right|_{\Omega_j}^2\Bigg\}
  \label{heat_interface_estimation}
\end{multline}
Estimation of the space component follows from property~\ref{ritz_interface} in Corollary~\ref{ritz_properties}. The time component was estimated in~(\ref{interface_time}). The analysis of the remaining two interface terms~\emph{(iv)} is similar. Since the last one is slightly simpler, we will take it as an example. We have, given the continuity of $\textbf{u}$ and using an identical set of arguments as in the previous proof
\begin{multline*}
  \int_{I^n} \frac{\gamma}{h}\left\langle (\textbf{u}_2 - \textbf{I}^k_2\textbf{R}^h_2\textbf{u}_2) - (\textbf{u}_1 - \textbf{I}^k_1\textbf{R}^h_1\textbf{u}_1), \textbf{e}_2^{k,h} - \textbf{e}_1^{k,h}\right\rangle_{\Gamma} \diff t \\
  = \int_{I^n} \frac{\gamma}{h}\left\langle  \textbf{I}^k_1\textbf{R}^h_1\textbf{u}_1 - \textbf{I}^k_2\textbf{R}^h_2\textbf{u}_2, \textbf{e}_2^{k,h} - \textbf{e}_1^{k,h}\right\rangle_{\Gamma} \diff t\\
  = \int_{I^n} \frac{\gamma}{h}\left\langle  \textbf{R}^h_1\textbf{u}_1 - \textbf{R}^h_2\textbf{u}_2, \textbf{e}_2^{k,h} - \textbf{e}_1^{k,h}\right\rangle_{\Gamma} \diff t.
\end{multline*}
Finally, given the continuity of the Ritz operator (\ref{ritz_continuity}), it holds
\begin{equation}
\begin{aligned}
    &\int_{I^n} \frac{\gamma}{h}\left\langle  \textbf{R}^h_1\textbf{u}_1 - \textbf{R}^h_2\textbf{u}_2, \textbf{e}_2^{k,h} - \textbf{e}_1^{k,h}\right\rangle_{\Gamma} \diff t = \textbf{0}.
\end{aligned}
\label{heat_interface_estimation_zero}
\end{equation}
The last interface term in~(\ref{discrete_estimation_heat}) can be estimated similarly. That ends the proof. 
\end{proof}
The theorem is equivalent to 
\begin{multline*}
  \big| \big| \textbf{e}^{k,h}(t^N)\big|\big| + \int_I \big|\big| \big| \textbf{e}^k\big|\big|\big| \diff t\\
  =
    \sum_{j=1}^2 \Big\{ \mathcal{O}\big(k_j\left| \left| \textbf{d}_t \nabla \textbf{u}_j \right| \right|_{\Omega_j} \big) + \mathcal{O}\big(k_j h^{\frac{1}{2}} \left|\left|\textbf{d}_t \partial_{\textbf{n}_j}\textbf{u}_j\right|\right|_{\Gamma} \big) \\
     + \mathcal{O}\big(h^{r + 1}  \big|\big| \textbf{d}_t \nabla^{r + 1} \textbf{u}_j \big| \big|_{\Omega_j}\big)  +
    \mathcal{O}\big(h^r \left| \left|\nabla^{r + 1} \textbf{u}_j \right| \right|_{\Omega_j} \big)\Big\}.
\end{multline*}
Again, we were able to fully decouple this system. In fact, we were able to equip the terms $\left|\left|\textbf{d}_t \partial_{\vec{\textbf{n}}_1}\textbf{u}_1\right|\right|_{\Gamma} $ and $\left|\left|\textbf{d}_t \partial_{\vec{\textbf{n}}_2}\textbf{u}_2\right|\right|_{\Gamma} $ with an additional half an order of convergence in space compared to the semi-discrete case. That being said, a comprehensive comparison between the fully discrete and semi-discrete cases is not possible since both of the inequalities are proved in different norms. Overall, we preserved the linear convergence in time.

%-------------------------------------------------------------------------

%\input{stokes}

\section{Coupling of Stokes Equations}

 As our third and final problem, we consider a system composed of two time-dependent Stokes equations. Each of them has a separate kinematic viscosity $(\nu_1, \nu_2)^T = \boldsymbol{\nu}$. Velocity $\textbf{u} = (\textbf{u}_1, \textbf{u}_2)^T: \Omega \times I  \to \mathbb{R}^d $, $\textbf{u}_j \in X(H^1(\Omega_j))^d$ and pressure $\textbf{p} = (p_1, p_2)^T: \Omega \times I  \to \mathbb{R}$, $p_j \in X(L^2(\Omega_j))$ for $j = 1,2$ are solutions to the system
\begin{equation}
  \textnormal{div} \ \textbf{u}_j = 0,\quad
  \partial_t \textbf{u}_j - 2 \nu_j \textnormal{div}\,\dot{\boldsymbol{\epsilon}}(\textbf{u}_j)  + \nabla p_j= \textbf{f}_j\text{ in }\Omega_j,\quad j=1,2
\end{equation}
where 
\begin{equation*}
  \dot{\boldsymbol{\epsilon}}(\textbf{u}) = \frac{1}{2} \left(\nabla \textbf{u} + \nabla \textbf{u}^T \right).    
\end{equation*}
On the outer boundary we set $\textbf{u}_1 = \textbf{u}_2 = \textbf{0}$. Also at the initial time, we impose $\textbf{u}_1(0) = \textbf{u}_2(0) = \textbf{0}$. On the interface we set
\begin{equation}
  \textbf{u}_1 = \textbf{u}_2\text{ and }
  \sigma_1(\textbf{u}_1, p_1) \cdot \textbf{n}_1= -  \sigma_2(\textbf{u}_2, p_2)\cdot \textbf{n}_2  \hspace*{0.2  cm}\text{ on }\Gamma,
\end{equation}
where the stress tensors  $\boldsymbol{\sigma} = (\sigma_1, \sigma_2)^T$ are given by
\begin{equation*}
  \sigma_j(\textbf{u}_j, p_j) = 2 \nu_j \dot{\boldsymbol{\epsilon}}(\textbf{u}_j) - p_j I,\quad j=1,2.
\end{equation*}
As test functions, we take $\boldsymbol{\varphi} = (\boldsymbol{\varphi}_1, \boldsymbol{\varphi}_2)^T$, $\boldsymbol{\varphi}_j \in X(H^1(\Omega_j))^d$ and $\boldsymbol{\psi} = (\psi_1, \psi_2)^T$, $\psi_j \in X(L^2(\Omega_j))$ for $j = 1,2$. We define the incompressibility form as
\begin{equation}
  \begin{aligned}
    & b(\textbf{u}, \boldsymbol{\psi}) = \int_I \Big\{- \left(\textnormal{div} \ \textbf{u}, \boldsymbol{\psi} \right)_{\Omega} +\frac{1}{2} \left\langle  \psi_2 \textbf{n}_2 -  \psi_1 \textbf{n}_1, \textbf{u}_2 - \textbf{u}_1  \right\rangle_{\Gamma} \Big\} \diff t.
  \end{aligned}
\end{equation}
In this formulation, we again follow the work presented in \cite{DG}. We note that the additional interface term does not violate the incompressibility condition since the exact solution is continuous across the interface and therefore 
\begin{equation*}
  \left\langle  \psi_2 \textbf{n}_2 -  \psi_1 \textbf{n}_1, \textbf{u}_2 - \textbf{u}_1  \right\rangle_{\Gamma} = 0. 
\end{equation*}
Given that, we present the variational problem
\begin{equation}
  a(\textbf{u}, \boldsymbol{\varphi}) + b(\boldsymbol{\varphi}, \textbf{p}) - b(\textbf{u}, \boldsymbol{\psi}) = \int_{I} \big(\textbf{f}, \boldsymbol{\varphi} \big)_{\Omega} \diff t,
  \label{continuous_stokes}
\end{equation}
where the form $a(\cdot, \cdot)$ has a similar definition as in the case of the heat equation
\begin{equation*}
  \begin{aligned}
    a(\textbf{u}, \boldsymbol{\varphi})
    & \coloneqq  \int_{I} \Big\{\left(\textbf{d}_t \textbf{u}, \boldsymbol{\varphi}\right)_{\Omega} + 2\boldsymbol{\nu}\left( \dot{\boldsymbol{\epsilon}}(\textbf{u}), \nabla \boldsymbol{\varphi}\right)_{\Omega}
    + \gamma\left\langle \textbf{u}_2 - \textbf{u}_1, \boldsymbol{\varphi}_2 - \boldsymbol{\varphi}_1\right\rangle_{\Gamma}\\
    & \qquad  +  \left\langle \nu_1   \dot{\epsilon}(\textbf{u}_1) \cdot \textbf{n}_1   - \nu_2 \dot{\epsilon}(\textbf{u}_2) \cdot \textbf{n}_2, \boldsymbol{\varphi}_2 - \boldsymbol{\varphi}_1\right\rangle_{\Gamma} 
     - \left\langle \textbf{u}_2 - \textbf{u}_1, \nu_1 \dot{\epsilon}(\boldsymbol{\varphi}_1) \cdot \textbf{n}_1  - \nu_2  \dot{\epsilon}(\boldsymbol{\varphi}_2) \cdot \textbf{n}_2\right\rangle_{\Gamma}   \Big\}\diff t
  \end{aligned}
\end{equation*}
%--------------------------------------------------------------------------
\subsection{Discretization in Time}
 Similarly, we can construct a discrete incompressibility form. We have to pay attention to the position of the trial and test functions. Therefore we define two versions of this form
\begin{equation}
  \begin{aligned}
    \bar{b}^k(\textbf{u}^k, \boldsymbol{\psi}^k) &\coloneqq \int_I \Big\{- \left(\textnormal{div} \ \textbf{u}^k, \boldsymbol{\psi}^k \right)_{\Omega} -\frac{1}{2} \left\langle \textbf{n}_1 \psi_1^k, \textbf{I}_1^k \textbf{u}_2^k - \textbf{u}_1^k  \right\rangle_{\Gamma}  +\frac{1}{2} \left\langle \textbf{n}_2 \psi_2^k, \textbf{u}_2^k - \textbf{I}_2^k\textbf{u}_1^k  \right\rangle_{\Gamma} \Big\}\diff t, \\
    b^k(\boldsymbol{\varphi}^k, \textbf{p}^k) &\coloneqq \int_I \Big\{- \left(\textnormal{div} \ \boldsymbol{\varphi}^k, \textbf{p}^k \right)_{\Omega} -\frac{1}{2} \left\langle \textbf{n}_2 I_1^k p_2^k - \textbf{n}_1 p_1^k, \boldsymbol{\varphi}_1^k  \right\rangle_{\Gamma} 
    + \frac{1}{2} \left\langle \textbf{n}_2 p_2^k - \textbf{n}_1 I_2^k p_1, \boldsymbol{\varphi}_2^k  \right\rangle_{\Gamma} \Big\}\diff t.
  \end{aligned}
\end{equation}
Together with the form 
\begin{equation*}
  \begin{aligned}
    a^{k}(\textbf{u}^{k}, \boldsymbol{\varphi}^{k}) & \coloneqq  \int_{I} \Big\{\big(\textbf{d}_t^k \textbf{u}^{k}, \boldsymbol{\varphi}^{k}\big)_{\Omega} + 2 \boldsymbol{\nu}\left(\dot{\boldsymbol{\epsilon}}( \textbf{u}^{k}), \nabla \boldsymbol{\varphi}^{k}\right)_{\Omega} \\ & \qquad 
    - \left\langle \nu_1  \dot{\epsilon} (\textbf{u}_1^k) \cdot \textbf{n}_1 - \nu_2  \dot{\epsilon}( \textbf{I}_1^k \textbf{u}_2^k) \cdot \textbf{n}_2, \boldsymbol{\varphi}_1^k \right\rangle_{\Gamma} 
    + \left\langle \nu_1  \dot{\epsilon} (\textbf{I}_2^k \textbf{u}_1^k) \cdot \textbf{n}_1 - \nu_2 \dot{\epsilon}(\textbf{u}_2^k) \cdot \textbf{n}_2, \boldsymbol{\varphi}_2^k \right\rangle_{\Gamma}
    \\ & \qquad - \left\langle \textbf{I}_1^k \textbf{u}_2^k - \textbf{u}_1^k, \nu_1  \dot{\epsilon}(\boldsymbol{\varphi}_1^k) \cdot \textbf{n}_1 \right\rangle_{\Gamma}
    +\left\langle \textbf{u}_2^k - \textbf{I}_2^k\textbf{u}_1^k, \nu_2  \dot{\epsilon}(\boldsymbol{\varphi}_2^k) \cdot \textbf{n}_2 \right\rangle_{\Gamma}\\
    &\qquad - \gamma \left\langle \textbf{I}_1^k \textbf{u}_2^k - \textbf{u}_1^k, \boldsymbol{\varphi}_1^k \right\rangle_{\Gamma}
    + \gamma \left\langle \textbf{u}_2^k -\textbf{I}_2^k\textbf{u}_1^k, \boldsymbol{\varphi}_2^k \right\rangle_{\Gamma}
    \Big\}\diff t,
  \end{aligned}
\end{equation*}
this defines the semi-discrete variational problem
\begin{equation}
  a^k(\textbf{u}^k, \boldsymbol{\varphi}^k) + b^k(\boldsymbol{\varphi}^k, \textbf{p}^k) - \bar{b}^k(\textbf{u}^k, \boldsymbol{\psi}^k) = \int_{I} \big(\textbf{f}, \boldsymbol{\varphi}^k \big)_{\Omega} \diff t.
  \label{semi-discrete_stokes}
\end{equation}
From this formulation, we can derive semi-discrete coupling conditions
\begin{equation}
  \begin{aligned}
    0 &=  \int_{I_n} \big\langle \textbf{u}_2^k - \textbf{u}_1^k, \boldsymbol{\varphi}_1^k \big\rangle_{\Gamma} \diff t = \int_{I_n} \big\langle \textbf{u}_2^k - \textbf{u}_1^k, \boldsymbol{\varphi}_2^k \big\rangle_{\Gamma} \diff t, \\
    0 &= \int_{I_n} \big\langle  \sigma_1(\textbf{u}_1, p_1) \cdot \textbf{n}_1 +  \sigma_2(\textbf{u}_2, p_2) \cdot \textbf{n}_2, \boldsymbol{\varphi}_j^k \rangle_{\Gamma} \diff t,\quad j=1,2,
  \end{aligned}
  \label{weak_coupling}
\end{equation}
for any $\boldsymbol{\varphi}_j^k \in (X^{k}_j)^d$. As it turns out, the theorem from the previous sections can be easily extended to the Stokes equation as well

\begin{theorem}
  Let $\textbf{u} \in X(H^1_0(\Omega))^d$, $\textbf{u}_j \in W^{1, \infty}(H^2(\Omega_j))^d$, $p_j \in W^{1, \infty}(L^2(\Omega_j))$ for $j=1,2$ be continuous solutions to~(\ref{continuous_stokes}) and $\textbf{u}^k \times \textbf{p}^k \in \big(X^k\big)^{d + 1}$ their semi-discrete counterparts and solutions to~(\ref{semi-discrete_stokes}), then the following estimate holds
  \begin{multline*}
    \big| \big| \textbf{e}^k(t^N)\big|\big|^2_{\Omega} + \int_I \boldsymbol{\nu}^2\left|\left|\nabla \textbf{e}^k \right|\right|^2_{\Omega} \diff t
    \\
    \leq C\sum_{j=1}^2 \sum_{n = 1}^{N}\sum_{m = 1}^{N^n_j}  \Bigg\{  (k^{n, m}_j)^3 \max_{t \in I}
    \left| \left| \textbf{d}_t \dot{\epsilon} (\textbf{u}_j) \right| \right|^2_{\Omega_j} + (k^{n,m}_j)^3 \max_{t \in I} \left| \left|d_t p_j\right| \right|_{\Omega_j}^2 \\
    + (k^{n, m}_j)^3 \max_{t \in I} \left|\left| \textbf{d}_t \sigma_j(\textbf{u}_j, p_j) \cdot \textbf{n}_j\right|\right|_{\Gamma}^2 \Bigg\},
  \end{multline*}
  where the errors $\textbf{e}^k = (\textbf{e}_1^k, \textbf{e}_2^k)^T$ are defined as
  $\textbf{e}_j^k \coloneqq \textbf{u}_j^k - \textbf{i}_j^k \textbf{u}_j$ for $j=1,2$.
  \label{theorem_semidiscrete_stokes}
\end{theorem}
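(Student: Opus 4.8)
The plan is to follow the proof of Theorem~\ref{theorem_semidiscrete_heat} closely in spirit, replacing the scalar diffusion by the symmetric gradient $\dot{\boldsymbol{\epsilon}}$ and carrying the pressure along through the two incompressibility forms. I would start from Galerkin orthogonality: testing~(\ref{semi-discrete_stokes}) with $\boldsymbol{\varphi}^k=\textbf{e}^k$, $\boldsymbol{\psi}^k=\textbf{0}$ and subtracting~(\ref{continuous_stokes}) evaluated at the same arguments yields
\begin{equation*}
  a^k(\textbf{u}^k,\textbf{e}^k)+b^k(\textbf{e}^k,\textbf{p}^k)=a(\textbf{u},\textbf{e}^k)+b(\textbf{e}^k,\textbf{p}),
\end{equation*}
because $\bar b^k(\textbf{u}^k,\textbf{0})=b(\textbf{u},\textbf{0})=0$. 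I would then subtract $\int_I\{(\textbf{d}_t^k\textbf{i}^k\textbf{u},\textbf{e}^k)_\Omega+2\boldsymbol{\nu}(\dot{\boldsymbol{\epsilon}}(\textbf{i}^k\textbf{u}),\nabla\textbf{e}^k)_\Omega\}\diff t$ from both sides, so that the left-hand side collapses to coercive contributions plus interface terms carrying $\textbf{I}_1^k,\textbf{I}_2^k$, and the right-hand side becomes a sum of projection defects.

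For the left-hand side I would first observe that the time-derivative term telescopes exactly as in~(\ref{time_identity}), delivering $\tfrac12\|\textbf{e}^k(t^N)\|_\Omega^2$ and nonnegative jumps, while the viscous term gives $2\boldsymbol{\nu}\|\dot{\boldsymbol{\epsilon}}(\textbf{e}^k)\|_\Omega^2$. The interface terms then need three cancellations. The block built from $\textbf{I}_1^k,\textbf{I}_2^k$ vanishes on each macro step by the averaging argument of~(\ref{interface_zero}), since on every $I^n$ one of $N_1^n,N_2^n$ equals one, the relevant $\textbf{I}_j^k$ agrees with $\bar{\textbf{I}}^k$, and property~(\ref{average_zero}) kills the pairing against the piecewise-constant error. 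The symmetrization and Nitsche terms vanish through the weak kinematic coupling of~(\ref{weak_coupling}). Finally, the discrete viscous-stress pairing $\langle\nu_1\dot{\boldsymbol{\epsilon}}(\textbf{u}_1^k)\cdot\textbf{n}_1-\nu_2\dot{\boldsymbol{\epsilon}}(\textbf{u}_2^k)\cdot\textbf{n}_2,\textbf{e}_2^k-\textbf{e}_1^k\rangle_\Gamma$ no longer cancels by itself as in the heat problem; instead I would fuse it with the interface part of $b^k(\textbf{e}^k,\textbf{p}^k)$—whose volume part drops because $\operatorname{div}\textbf{e}^k=0$—to form the discrete full-stress jump, which is then annihilated by the discrete dynamic coupling in~(\ref{weak_coupling}) together with the same averaging mechanism that eliminated term~$(iii)$ in the heat proof.

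On the right-hand side I would argue on each micro interval $I_j^{n,m}$. The time-consistency term integrates to zero exactly as in~(\ref{time_equivalence}); the viscous defect $2\boldsymbol{\nu}(\dot{\boldsymbol{\epsilon}}(\textbf{u}-\textbf{i}^k\textbf{u}),\nabla\textbf{e}^k)$ is bounded by the fundamental-theorem-of-calculus estimate of~(\ref{laplace_time}), producing $(k_j^{n,m})^3\|\textbf{d}_t\dot{\boldsymbol{\epsilon}}(\textbf{u}_j)\|_{\Omega_j}^2$ after Young's inequality and absorption of a $\tfrac18\nu_j^2\|\nabla\textbf{e}_j^k\|^2$ remainder. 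Since $\textbf{e}^k$ is divergence-free the volume pressure pairing in $b(\textbf{e}^k,\textbf{p})$ again vanishes, leaving only the exact interface pairing; together with the exact viscous-stress pairing from $a(\textbf{u},\textbf{e}^k)$ and the strong dynamic coupling this recombines, after inserting $\textbf{I}_j^k$ as in~(\ref{neumann_heat}), into the projection error of the full normal stress $\sigma_j(\textbf{u}_j,p_j)\cdot\textbf{n}_j=2\nu_j\dot{\boldsymbol{\epsilon}}(\textbf{u}_j)\cdot\textbf{n}_j-p_j\textbf{n}_j$. Estimating this defect as in~(\ref{interface_time}) yields the interface contribution $(k_j^{n,m})^3\|\textbf{d}_t\sigma_j\cdot\textbf{n}_j\|_\Gamma^2$, while charging its pressure part to the volume through the duality underlying the $H^{-1/2}(\Gamma)$ interface norm produces the remaining $(k_j^{n,m})^3\|d_t p_j\|_{\Omega_j}^2$ term.

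The hard part is twofold, and both difficulties are genuinely new compared with the heat equation. First, the coercive term controls only the symmetric gradient, so reaching $\int_I\boldsymbol{\nu}^2\|\nabla\textbf{e}^k\|_\Omega^2$ requires a Korn inequality; this is delicate precisely because the enclosed subdomain $\Omega_1$ carries no outer Dirichlet datum, so the rigid-body component of $\textbf{e}_1^k$ is invisible to $\|\dot{\boldsymbol{\epsilon}}(\textbf{e}_1^k)\|_{\Omega_1}$ and has to be pinned down through the weak interface continuity in~(\ref{weak_coupling}), which ties the trace of $\textbf{e}_1^k$ on $\Gamma$ to the Dirichlet-controlled field $\textbf{e}_2^k$ on $\Omega_2$. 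Second, the pressure must be threaded so that every pressure pairing either cancels (in the volume by divergence-freeness, on the interface by the stress balance) or is charged to an absorbable projection defect, and the viscous and pressure pieces must stay correctly paired for the $\tfrac18\nu_j^2$ absorptions to succeed. Once these two points are secured, summing the micro-interval bounds over all $I_j^{n,m}$ reproduces the claimed estimate.
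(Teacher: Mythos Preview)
Your proposal follows the paper's line closely: the overall structure (Galerkin orthogonality, identifying the coercive left side, eliminating interface terms via averaging and weak coupling, then bounding projection defects on each micro interval) is the same, and you are in fact more explicit than the paper about fusing the viscous interface pairing from $a^k$ with the pressure interface pairing from $b^k$ into the full normal stress $\sigma_j\cdot\textbf{n}_j$ so that the weak dynamic coupling~(\ref{weak_coupling}) applies. You also flag the Korn issue on the enclosed subdomain $\Omega_1$, which the paper passes over with a one-line appeal to the ``first Korn inequality''.

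The one point where you diverge from the paper is the origin of the volume pressure contribution $(k_j^{n,m})^3\|d_tp_j\|_{\Omega_j}^2$. The paper obtains it from the right-hand-side pairing $\int_I(\operatorname{div}\textbf{e}^k,\textbf{p})_\Omega\diff t$, which it rewrites as $\int_I(\operatorname{div}\textbf{e}^k,\textbf{p}-\textbf{i}^k\textbf{p})_\Omega\diff t$ and bounds by Cauchy--Schwarz and Young. You instead observe---correctly, in this semi-discrete setting where the pressure test space has full $L^2$ spatial resolution, so that $\operatorname{div}\textbf{u}^k=0$ pointwise---that $\operatorname{div}\textbf{e}^k=0$ and hence this pairing vanishes identically. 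Your subsequent attempt to manufacture the volume pressure term ``through the duality underlying the $H^{-1/2}(\Gamma)$ interface norm'' is vague and not obviously sound; but since the term is in fact not needed (the paper's estimate is bounding zero here), you can simply drop it or keep it as a harmless overestimate to match the stated theorem, so this is cosmetic rather than a real gap.
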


\begin{proof} 
  By using Galerkin orthogonality
  \begin{equation}
    \begin{aligned}
      & a^k(\textbf{u}^k, \textbf{e}^k) + b^k(\textbf{e}^k, \textbf{p}^k) - \int_{I} \Big\{\big(\textbf{d}_t^k \textbf{i}^k \textbf{u}, \textbf{e}^k\big)_{\Omega} + 2\boldsymbol{\nu} \big(\dot{\boldsymbol{\epsilon}}( \textbf{i}^k \textbf{u}), \nabla{\textbf{e}^k} \big)_{\Omega} \Big\} \diff t \\ & \qquad= a(\textbf{u}, \textbf{e}^k) + b(\textbf{e}^k, \textbf{p})- \int_{I} \Big\{\big(\textbf{d}_t^k \textbf{i}^k \textbf{u}, \textbf{e}^k\big)_{\Omega} + 2\boldsymbol{\nu} \big(\dot{\boldsymbol{\epsilon}}( \textbf{i}^k \textbf{u}), \nabla{\textbf{e}^k} \big)_{\Omega} \Big\} \diff t.
    \end{aligned}
    \label{semi-discrete-stokes-orthogonality}
  \end{equation}
  For any $\boldsymbol{\psi}^k \in X^k$, we have
  \begin{equation*}
    \int_I \left(\textnormal{div} \ \textbf{u}, \boldsymbol{\psi}^k \right)_{\Omega} \diff t = \int_I \left(\textnormal{div} \ \textbf{u}^k, \boldsymbol{\psi}^k \right)_{\Omega} \diff t = 0.
  \end{equation*}
  Knowing that the semi-discrete pressure is piecewise constant in time, we can claim that
  \begin{multline}
      \int_I\left(\textnormal{div} \ \textbf{e}^k, \textbf{p}^k\right)_{\Omega}\diff t  = \int_I \Big\{\left(\textnormal{div} \ \textbf{u}^k, \textbf{p}^k\right)_{\Omega} -  \left(\textnormal{div} \ \textbf{i}^k \textbf{u}, \textbf{p}^k\right)_{\Omega} \Big\} \diff t \\
       = \int_I \Big\{\left(\textnormal{div} \ \textbf{u}^k, \textbf{p}^k\right)_{\Omega} - \textbf{i}^k \left(\textnormal{div} \ \textbf{u}, \textbf{p}^k\right)_{\Omega} \Big\} \diff t = 0.
    \label{interpolation_div_zero}
  \end{multline}
  Therefore, on the left side of (\ref{semi-discrete-stokes-orthogonality}), we have
  \begin{multline*}
    a^k(\textbf{u}^k, \textbf{e}^k) + b^k(\textbf{e}^k, \textbf{p}^k) - \int_{I} \Big\{\big(\textbf{d}_t^k \textbf{i}^k \textbf{u}, \textbf{e}^k\big)_{\Omega} + 2\boldsymbol{\nu} \big(\dot{\boldsymbol{\epsilon}}( \textbf{i}^k \textbf{u}), \nabla{\textbf{e}^k} \big)_{\Omega} \Big\} \diff t \\
    =  \int_I \big(\textbf{d}_t^k \textbf{e}^{k,h}, \textbf{e}^{k,h} \big)_{\Omega} \diff t + \int_I 2\boldsymbol{\nu} \big(\dot{\boldsymbol{\epsilon}}( \textbf{e}^{k,h}), \nabla\textbf{e}^{k,h} \big)_{\Omega} \diff t.
  \end{multline*}
  We can similarly show that 
  \begin{equation*}
    \int_I\left(\textnormal{div} \ \textbf{e}^k, \textbf{p}\right)_{\Omega}\diff t = \int_I\left(\textnormal{div} \ \textbf{e}^k, \textbf{p} - \textbf{i}^k\textbf{p}\right)_{\Omega}\diff t. 
  \end{equation*}
  Indeed, it holds
  \begin{multline*}
    \int_I\left(\textnormal{div} \ \textbf{e}^k, \textbf{i}^k \textbf{p}\right)_{\Omega}\diff t  =  \int_I \Big\{\left(\textnormal{div} \ \textbf{u}^k, \textbf{i}^k \textbf{p}^k\right)_{\Omega} -  \left(\textnormal{div} \ \textbf{i}^k \textbf{u}, \textbf{i}^k \textbf{p}^k\right)_{\Omega} \Big\} \diff t\\
    =  -\int_I \textbf{i}^k\left(\textnormal{div} \  \textbf{u}, \textbf{i}^k \textbf{p}^k\right)_{\Omega} \diff t = 0. 
  \end{multline*}
  Further, on each interval $I^{n,m}_1$ we have
  \begin{equation*}
    \begin{aligned}
      \Bigg|\int_{I^{n,m}_1}\left(\textnormal{div} \ \textbf{e}^k_1, p_1 - i_1^k p_1\right)_{\Omega_1}\diff t \Bigg|  \leq c (k^{n,m}_1)^3\max_{t \in I} \left| \left| d_t p_1\right| \right|_{\Omega_1}^2 + \frac{1}{8} \nu_1 \int_{I^{n,m}_1} \left| \left|\nabla \textbf{e}_1^k \right| \right|_{\Omega_1}^2 \diff t.
    \end{aligned}
  \end{equation*}
  We will symmetrize the Laplacian term in~(\ref{semi-discrete-stokes-orthogonality}) and use the first Korn inequality
  \begin{equation*}
    \int_I 2\boldsymbol{\nu} \big(\dot{\boldsymbol{\epsilon}}( \textbf{e}^{k,h}), \nabla\textbf{e}^{k,h} \big)_{\Omega} \diff t 
    = \int_I 2\boldsymbol{\nu} \big| \big| \dot{\boldsymbol{\epsilon}}( \textbf{e}^{k,h}) \big|\big|_{\Omega}^2 \diff t  \geq c_K \int_I \boldsymbol{\nu} \big| \big| \nabla \textbf{e}^{k,h} \big|\big|_{\Omega}^2  \diff t .
  \end{equation*}
  By $c_K$ we denote the constant from Korn's inequality.
  The rest directly follows from Theorem~\ref{theorem_semidiscrete_heat} simply by using the appropriate Neumann coupling conditions on the interface (\ref{interface_time}) and replacing $\nabla \textbf{e}^{k,h}$ with $\dot{\boldsymbol{\epsilon}}(\textbf{e}^{k,h})$ in~(\ref{laplace_time}).
\end{proof}
We just showed that
\begin{multline*}
  \big| \big| \textbf{e}^k(t^N)\big|\big| + \int_I\boldsymbol{\nu}\big| \big| \textbf{e}^k\big|\big| \diff t \\
   = \sum_{j=1}^2 \mathcal{O}\big(k_j\left| \left| \textbf{d}_t \dot{\epsilon}(\textbf{u}_j) \right| \right|_{\Omega_j} \big) + \mathcal{O}\big(k_j \left| \left|d_t p_j\right| \right|_{\Omega_j} \big)  + \mathcal{O}\big(k_j \left|\left| \textbf{d}_t \sigma_j(\textbf{u}_j, p_j) \cdot \textbf{n}_j \right|\right|_{\Gamma} \big) 
\end{multline*}
These results are analogous to what we were able to show in Theorem~\ref{theorem_semidiscrete_heat}. The differences include replacing $\nabla \textbf{e}^{k,h}$ with $\dot{\boldsymbol{\epsilon}}(\textbf{e}^{k,h})$ and adding new pressure terms. The newly introduced volume pressure terms are also decoupled. We managed to preserve the optimal linear convergence rate. 

%-----------------------------------------------------------

\subsection{Discretization in Time and Space}

 We can establish a similar estimate for the fully discrete coupled Stokes equations. We consider classical inf-sup stable Taylor-Hood elements, where $\textbf{u}^{k,h} \times \textbf{p}^{k,h}\in \big(X^{k,h}(r)\big)^d \times X^{k,h}(r-1)$ for $ r \geq 2$. The fully discrete variational formulation reads as 
\begin{equation}
  a^{k,h}(\textbf{u}^{k,h}, \boldsymbol{\varphi}^{k,h}) + b^k(\boldsymbol{\varphi}^{k,h}, \textbf{p}^{k,h}) - \bar{b}^k(\textbf{u}^{k,h}, \boldsymbol{\psi}^{k,h}) = \int_{I} \big(\textbf{f}, \boldsymbol{\varphi}^{k,h} \big)_{\Omega} \diff t,
  \label{fully_discrete_stokes}
\end{equation}
where
\begin{equation*}
  \begin{aligned}
    & a^{k, h}(\textbf{u}^{k,h}, \boldsymbol{\varphi}^{k,h}) \coloneqq \int_{I} \Big\{\big(\textbf{d}_t^k \textbf{u}^{k,h}, \boldsymbol{\varphi}^{k,h}\big)_{\Omega} + 2\boldsymbol{\nu}\left(\dot{\boldsymbol{\epsilon}} (\textbf{u}^{k,h}), \nabla \boldsymbol{\varphi}^{k,h}\right)_{\Omega} \\ & \qquad 
    - \left\langle \nu_1 \dot{\epsilon} (\textbf{u}^{k,h}_1) \cdot \textbf{n}_1 - \nu_2  \dot{\epsilon}( \textbf{I}_1^k \textbf{u}_2^{k,h}) \cdot \textbf{n}_2, \boldsymbol{\varphi}_1^{k,h} \right\rangle_{\Gamma} 
    + \left\langle \nu_1 \dot{\epsilon}( \textbf{I}_2^k \textbf{u}_1^{k,h}) \cdot \textbf{n}_1 - \nu_2  \dot{\epsilon} (\textbf{u}_2^{k,h}) \cdot \textbf{n}_2, \boldsymbol{\varphi}_2^{k,h} \right\rangle_{\Gamma}
    \\ & \qquad - \left\langle \textbf{I}_1^k \textbf{u}_2^{k,h} - \textbf{u}_1^{k,h}, \nu_1 \dot{\epsilon} (\boldsymbol{\varphi}_1^{k,h}) \cdot \textbf{n}_1\right\rangle_{\Gamma}
    + \left\langle \textbf{u}_2^{k,h} - \textbf{I}_2^k\textbf{u}_1^{k,h}, \nu_2  \dot{\epsilon}(\boldsymbol{\varphi}_2^{k,h}) \cdot \textbf{n}_2 \right\rangle_{\Gamma}
    \\ & \qquad - \frac{\gamma}{h} \left\langle \textbf{I}_1^k \textbf{u}_2^{k,h} - \textbf{u}_1^{k,h}, \boldsymbol{\varphi}_1^{k,h} \right\rangle_{\Gamma}
    + \frac{\gamma}{h} \left\langle \textbf{u}_2^{k,h} -\textbf{I}_2^k\textbf{u}_1^{k,h}, \boldsymbol{\varphi}_2^{k,h} \right\rangle_{\Gamma}
    \Big\}\diff t.
  \end{aligned}
\end{equation*}
For the Stokes problem, we will use a modified version of the Ritz projection operator~(\ref{ritz}) to account for the incompressibility condition
\begin{equation*}
  \begin{aligned}
    \big(\nabla \textbf{R}^h \textbf{u}, \nabla \boldsymbol{\varphi}^{k,h}\big)_{\Omega} - \big(\textbf{q},  \textnormal{div} \ \boldsymbol{\varphi}^{k,h}\big)_{\Omega} &= \big(\nabla \textbf{u}, \nabla \boldsymbol{\varphi}^{k,h}\big)_{\Omega}, && \boldsymbol{\varphi}^{k,h} \in (Y^{k,h}(r))^d\\
    \big(\textnormal{div} \ \textbf{R}^h \textbf{u} , \boldsymbol{\psi}^{k,h }\big)_{\Omega} &= 0, && \boldsymbol{\psi}^{k,h} \in Y^{k,h}(r-1). 
  \end{aligned}
\end{equation*}
All of the properties established in Corollary~\ref{ritz_properties} still hold. The newly introduced pressure term $\textbf{q}$ is only a Lagrange multiplier needed to project the solution $\textbf{u}$ into the space of divergence-free functions. We will not come back to it in the proofs. However, for the pressure $\textbf{p}$, we will use an additional projection operator $\textbf{I}^h = (I_1^h, I_2^h)^T$ given by
\begin{equation}
  \big(\textbf{I}^h \textbf{p}, \boldsymbol{\psi}^{k,h}\big)_{\Omega} = \big(\textbf{p}, \boldsymbol{\psi}^{k,h}\big)_{\Omega} \hspace*{0.3 cm}
  \textnormal{ for all } \hspace*{0.3 cm} \boldsymbol{\psi}^{k,h} \in X^{k,h}(r-1).
  \label{pressure_projection}
\end{equation}
Below we list some of the useful properties.
\begin{corollary} Given $p_j \in L(\bar{I}, H^{r}(\Omega_j))$ for $j=1,2$, the projection operator given by~(\ref{pressure_projection}) has the following properties:
  \begin{enumerate}[label=(\roman*)]
  \item 
    $
    \big| \big| \textbf{p} - \textbf{I}^h \textbf{p}\big|\big|_{\Omega} \leq c_1 h^{r} \big|\big| \nabla^{r} \textbf{p}_1\big|\big|_{\Omega_1} + c_2 h^{r} \big|\big| \nabla^{r} \textbf{p}_2\big|\big|_{\Omega_2},
    $
    \label{pressure_l2}
    
  \item 
    $
    \big| \big|( p_1 - I_1^h p_1) \textbf{n}_1\big|\big|_{\Gamma} +  \big| \big|( p_2 - I_2^h p_2) \textbf{n}_2\big|\big|_{\Gamma}
     \leq c_1 h^{r - \frac{1}{2}} \big|\big| \nabla^{r} p_1\big|\big|_{\Omega_1} + c_2 h^{r - \frac{1}{2}} \big|\big| \nabla^{r} p_2\big|\big|_{\Omega_2}$. \label{pressure_interface}
  \end{enumerate}
  \label{pressure_properties} 
\end{corollary}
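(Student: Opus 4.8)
The plan is to exploit that $\textbf{I}^h$ is precisely the $L^2(\Omega)$-orthogonal projection onto the discrete pressure space $X^{k,h}(r-1)$, and that this space factors as a product $X^{k,h}_1(r-1)\times X^{k,h}_2(r-1)$ over the two disjoint subdomains. Since the inner product splits as $(\cdot,\cdot)_\Omega = (\cdot,\cdot)_{\Omega_1}+(\cdot,\cdot)_{\Omega_2}$, the defining relation~(\ref{pressure_projection}) decouples, so that $I_j^h p_j$ is simply the local $L^2(\Omega_j)$-projection of $p_j$ onto continuous piecewise polynomials of degree $r-1$ on $\mathcal{T}^h_j$. Both claims can therefore be treated one subdomain at a time and then summed.

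For property~\ref{pressure_l2} I would invoke the best-approximation characterization of the $L^2$ projection: for every $\pi_j \in X^{k,h}_j(r-1)$ one has $\|p_j - I_j^h p_j\|_{\Omega_j} \le \|p_j - \pi_j\|_{\Omega_j}$. Choosing $\pi_j$ to be a standard (Lagrange or Scott--Zhang) interpolant of $p_j$ and using the classical polynomial approximation estimate $\|p_j - \pi_j p_j\|_{\Omega_j} \le c\, h^r \|\nabla^r p_j\|_{\Omega_j}$, which is valid because degree-$(r-1)$ polynomials reproduce all polynomials of degree $r-1$ and $p_j \in H^r(\Omega_j)$, immediately yields the $h^r$ bound. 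Adding the two subdomain contributions gives~\ref{pressure_l2}.

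For the interface estimate~\ref{pressure_interface} the central tool is the local scaled trace inequality on the elements $K_j$ that touch $\Gamma$, namely $\|v\|_{L^2(\partial K_j\cap\Gamma)}^2 \le c\big(h_j^{-1}\|v\|_{L^2(K_j)}^2 + h_j\,\|\nabla v\|_{L^2(K_j)}^2\big)$, applied to $v = p_j - I_j^h p_j$. Controlling the first term by the $L^2$ bound above and the second by an $H^1$-type bound $\|\nabla(p_j - I_j^h p_j)\|_{\Omega_j}\le c\, h^{r-1}\|\nabla^r p_j\|_{\Omega_j}$, then summing over the boundary elements, produces $\|p_j - I_j^h p_j\|_{L^2(\Gamma)} \le c\, h^{r-\frac12}\|\nabla^r p_j\|_{\Omega_j}$. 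Since $|\textbf{n}_j|=1$ pointwise and the interface norm $\|\cdot\|_\Gamma$ (the $H^{-\frac12}(\Gamma)$ norm induced by the duality pairing) is dominated by the $L^2(\Gamma)$ norm, property~\ref{pressure_interface} follows after adding the two subdomains.

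The main obstacle is the $H^1$-type bound on the projection error needed in the trace step: unlike the $L^2$ estimate, $\|\nabla(p_j - I_j^h p_j)\|_{\Omega_j}\le c\, h^{r-1}\|\nabla^r p_j\|_{\Omega_j}$ does not follow from orthogonality alone but rests on the $H^1$-stability of the $L^2$ projection, which in turn requires the triangulations $\mathcal{T}^h_j$ to be (globally) quasi-uniform. I would either assume this mesh regularity explicitly and cite the standard stability result, or circumvent it by combining the trace inequality with a Scott--Zhang interpolant $\pi_j$ that is simultaneously $L^2$- and $H^1$-stable, writing $p_j - I_j^h p_j = (p_j - \pi_j p_j) + (\pi_j p_j - I_j^h p_j)$ and estimating the first summand by interpolation and the second, after an inverse inequality, by the $L^2$ best-approximation bound. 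The remaining steps are routine approximation theory.
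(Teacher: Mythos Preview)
Your argument is correct and is precisely the standard route to these classical approximation results: decouple into the two subdomains, use the best-approximation property of the $L^2$ projection together with a polynomial interpolant for~\ref{pressure_l2}, and combine the scaled trace inequality on boundary elements with $L^2$ and $H^1$ projection-error bounds for~\ref{pressure_interface}. Your discussion of the $H^1$-stability issue for the $L^2$ projection, and the Scott--Zhang workaround, is also to the point.

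There is, however, nothing to compare against: the paper states Corollary~\ref{pressure_properties} without proof, exactly as it does for the analogous Ritz-projection estimates in Corollary~\ref{ritz_properties}. Both are treated as standard approximation-theory facts and simply quoted. Your write-up therefore supplies what the paper omits, and does so by the textbook argument.
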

We proceed to the velocity error estimation for the fully discrete problem. 
\begin{theorem}[A priori velocity estimate for the fully discrete Stokes problem]\label{thm:stokes:vel}
  Let $\textbf{u} \in X(H^1_0(\Omega))^d$, $\textbf{u}_j \in W^{1, \infty}(H^{r+1}(\Omega_j)^d$, $p_j \in W^{1, \infty}(H^{r}(\Omega_j)$ for $j=1,2$ be continuous solutions to~(\ref{continuous_stokes}) and $\textbf{u}^{k,h} \times \textbf{p}^{k,h}\in \big(X^{k,h}(r)\big)^d \times X^{k,h}(r-1)$ their discrete counterparts and solutions to~(\ref{fully_discrete_stokes}), then the following estimate holds
  \begin{multline*}
    \big| \big| \textbf{e}^{k,h}(t^N)\big|\big|^2_{\Omega}  + \int_I  \big|\big|\big|\textbf{e}^{k,h} \big|\big|\big|^2_{\Omega} \diff t  \\
    \leq C\sum_{j=1}^2
    \sum_{n = 1}^{N}\sum_{m = 1}^{N^n_j}  \Bigg\{(k^{n, m}_j)^3 \max_{t \in I} \left| \left| \textbf{d}_t \dot{\epsilon} (\textbf{u}_j) \right| \right|^2_{\Omega_j} + (k^{n,m}_j)^3 \max_{t \in I} \left| \left|d_t p_j\right| \right|_{\Omega_j}^2  \\
    + (k^{n, m}_j)^3h \max_{t \in I} \left|\left| \textbf{d}_t \sigma_j(\textbf{u}_j, p_j) \cdot \textbf{n}_j \right|\right|_{\Gamma}^2 \\
    + k^{n,m}_j  h^{2r + 2}  \max_{t \in I} \Big|\Big| \textbf{d}_t \nabla^{r + 1} \textbf{u}_j \Big| \Big|^2_{\Omega_j}
    +k_j^{n,m} h^{2r} \left| \left|\nabla^{r + 1} \textbf{u}_j(t^{n,m}_j) \right| \right|^2_{\Omega_j} 
    + k_j^{n,m} h^{2r} \left| \left|\nabla^{r} p_j(t^{n,m}_1) \right| \right|^2_{\Omega_j}
    \Bigg\} 
  \end{multline*}
  where the errors $\textbf{e}^k = (\textbf{e}_1^k, \textbf{e}_2^k)^T$, $\boldsymbol{\eta}^k = (\eta_1^k, \eta_2^k)^T$ are defined as
  $\textbf{e}_j^{k,h} \coloneqq \textbf{u}_j^{k,h} - \textbf{i}_j^k \textbf{R}_j^h \textbf{u}_j$ and
  $\eta_j^{k,h} \coloneqq  p_j^{k,h} - i_j^k I_j^h p_j$ for $j=1,2$.
  \label{theorem_discrete_stokes}
\end{theorem}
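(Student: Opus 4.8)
The plan is to combine the structure of the fully discrete heat estimate in Theorem~\ref{theorem_discrete_heat} with the pressure handling from the semi-discrete Stokes proof of Theorem~\ref{theorem_semidiscrete_stokes}. I would begin from Galerkin orthogonality for~(\ref{fully_discrete_stokes}), subtracting the interpolant $\textbf{i}^k\textbf{R}^h\textbf{u}$ and testing the velocity equation with the velocity error $\textbf{e}^{k,h}$ and the incompressibility form with the pressure error $\boldsymbol{\eta}^{k,h}$. On the left-hand side, the incompressibility contributions through $b^k$ and $\bar b^k$ vanish exactly by the argument already used in~(\ref{interpolation_div_zero}), since the discrete pressure is piecewise constant in time and $\textbf{R}^h\textbf{u}$ is divergence-free against the test space. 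The interface terms carrying the transfer operators $\textbf{I}_1^k,\textbf{I}_2^k$ cancel on each macro step exactly as in~(\ref{interface_zero}), because the hierarchical mesh guarantees $N_1^n=1$ or $N_2^n=1$ and the average-zero property~(\ref{average_zero}). What remains on the left is $\int_I(\textbf{d}_t^k\textbf{e}^{k,h},\textbf{e}^{k,h})_\Omega$ together with the symmetric-gradient term and the $\tfrac{\gamma}{h}$ jump penalty; Korn's first inequality turns $2\boldsymbol{\nu}\|\dot{\boldsymbol{\epsilon}}(\textbf{e}^{k,h})\|_\Omega^2$ into $c_K\int_I\boldsymbol{\nu}\|\nabla\textbf{e}^{k,h}\|_\Omega^2$, and the telescoping identity~(\ref{time_identity}) yields the lower bound $\tfrac12\|\textbf{e}^{k,h}(t^N)\|_\Omega^2+\int_I|||\textbf{e}^{k,h}|||_\Omega^2$.

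For the right-hand side I would split every consistency term into a pure time-discretization part and a spatial-projection part, as in the fully discrete heat proof. The time part of the $\textbf{d}_t$ contribution vanishes by~(\ref{time_equivalence}); the Ritz part produces the $h^{2r+2}\|\textbf{d}_t\nabla^{r+1}\textbf{u}_j\|_{\Omega_j}^2$ and $h^{2r}\|\nabla^{r+1}\textbf{u}_j\|_{\Omega_j}^2$ contributions through property~(\ref{ritz_l2}) of Corollary~\ref{ritz_properties}, combined with Young's and Poincar\'e's inequalities so that the $\tfrac18\nu_j^2\|\nabla\textbf{e}_j^{k,h}\|_{\Omega_j}^2$ remainders are absorbed into the coercive left-hand side. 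The new ingredient relative to the heat case is the pressure: the volume term $(\textnormal{div}\,\textbf{e}^{k,h},\textbf{p}-\textbf{i}^k\textbf{I}^h\textbf{p})_\Omega$ is handled as in the semi-discrete Stokes proof, but now the spatial projection~(\ref{pressure_projection}) supplies the additional $h^{2r}\|\nabla^r p_j\|_{\Omega_j}^2$ term via property~(\ref{pressure_l2}) of Corollary~\ref{pressure_properties}, with the $d_t p_j$ piece giving $(k_j^{n,m})^3\|d_t p_j\|_{\Omega_j}^2$.

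The interface stress terms are where velocity and pressure genuinely couple. Writing $\sigma_j(\textbf{u}_j,p_j)\cdot\textbf{n}_j = 2\nu_j\dot{\epsilon}(\textbf{u}_j)\cdot\textbf{n}_j - p_j\textbf{n}_j$, I would estimate the consistency term analogous to~(iii) in~(\ref{discrete_estimation_heat}) by the Cauchy--Schwarz splitting used in~(\ref{heat_interface_estimation}), subtracting the transfer-projected Ritz and pressure projections. The $\tfrac{h}{2\gamma}$ weight coming from the $\tfrac{\gamma}{h}$ Nitsche penalty, together with the trace estimates~(\ref{ritz_interface}) and~(\ref{pressure_interface}), yields both the temporal $(k_j^{n,m})^3 h\|\textbf{d}_t\sigma_j(\textbf{u}_j,p_j)\cdot\textbf{n}_j\|_\Gamma^2$ term and the spatial $h^{2r}$ normal-derivative and pressure-trace terms, while the $\tfrac{\gamma}{8h}$ jump remainder is again absorbed. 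The remaining Nitsche jump terms analogous to~(iv) vanish by the continuity of the Ritz projection~(\ref{ritz_continuity}) exactly as in~(\ref{heat_interface_estimation_zero}). Summing over $n,m$ and invoking the decoupling of one refined subproblem per macro step gives the stated bound.

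The main obstacle I anticipate is bookkeeping the coupling of velocity and pressure in the interface stress term while preserving the exact decoupling of the two time lattices: one must verify that the transfer operators $\textbf{I}_j^k$ applied to the Ritz and pressure projections still satisfy the average-zero cancellation~(\ref{average_zero}) on each macro step, so that no cross term between the fast and slow meshes survives. A secondary technical point is the correct power of $h$ in the stress term, which hinges on pairing the $\tfrac{\gamma}{h}$ penalty scaling with the trace estimates in Corollaries~\ref{ritz_properties} and~\ref{pressure_properties}; carrying the half-order of $h$ consistently through both the $\dot{\epsilon}$ and the pressure pieces of $\sigma_j$ is the delicate part.
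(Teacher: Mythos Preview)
Your proposal is correct and follows essentially the same route as the paper: Galerkin orthogonality, cancellation of the transfer-operator interface terms via~(\ref{interface_zero}) and~(\ref{average_zero}), Korn's inequality plus the telescoping identity~(\ref{time_identity}) on the left, and on the right a term-by-term reduction to the estimates already established in Theorems~\ref{theorem_semidiscrete_heat}, \ref{theorem_discrete_heat}, and~\ref{theorem_semidiscrete_stokes}, with the interface stress treated via~(\ref{heat_interface_estimation}) extended by the pressure trace bound of Corollary~\ref{pressure_properties} and the Nitsche jump terms killed by~(\ref{ritz_continuity}).

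The one place you deviate from the paper is the volume pressure term $(\textnormal{div}\,\textbf{e}^{k,h},\textbf{p}-\textbf{i}^k\textbf{I}^h\textbf{p})_\Omega$. You plan to \emph{estimate} it, splitting into a temporal part yielding $(k_j^{n,m})^3\|d_t p_j\|_{\Omega_j}^2$ and a spatial part yielding $h^{2r}\|\nabla^r p_j\|_{\Omega_j}^2$. The paper instead argues that both divergence terms on the right vanish identically, because the Stokes--Ritz projection $\textbf{R}^h\textbf{u}$ is constructed to be discretely divergence-free, so $\textnormal{div}\,\textbf{e}^{k,h}$ and $\textnormal{div}(\textbf{u}-\textbf{i}^k\textbf{R}^h\textbf{u})$ integrate to zero against any discrete pressure. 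Your approach is safe and still gives the stated bound; the paper's shortcut is cleaner and explains why the modified Ritz projection with the Lagrange multiplier $\textbf{q}$ was introduced in the first place. Either way the interface stress contribution already supplies the $h^{2r}\|\nabla^r p_j\|_{\Omega_j}^2$ term, so nothing is lost.
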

\begin{proof}
  The Galerkin orthogonality gives us 
  \begin{equation}
    \begin{aligned}
      &a^k(\textbf{e}^{k,h}, \textbf{e}^{k,h}) + b^k(\textbf{e}^{k,h}, \boldsymbol{\eta}^{k,h}) - \bar{b}^k(\textbf{e}^{k,h}, \boldsymbol{\eta}^{k,h}) \\ & \qquad = a(\textbf{u}, \textbf{e}^{k,h}) - a^k(\textbf{i}^k\textbf{R}^h\textbf{u}, \textbf{e}^{k,h}) + b(\textbf{e}^{k,h}, \textbf{p}) - b^k(\textbf{e}^{k,h}, \textbf{i}^k \textbf{I}^h \textbf{p}) \\ & \qquad- b(\textbf{u}, \boldsymbol{\eta}^{k,h}) + \bar{b}^k(\textbf{i}^k \textbf{R}^h \textbf{u}, \boldsymbol{\eta}^{k,h}).
    \end{aligned}
    \label{fully-discrete-stokes-orthogonal}
  \end{equation}
  We can show that the left side is equal to 
  $$a^k(\textbf{e}^{k,h}, \textbf{e}^{k,h}) + b^k(\textbf{e}^{k,h}, \boldsymbol{\eta}^{k,h}) - \bar{b}^k(\textbf{e}^{k,h}, \boldsymbol{\eta}^{k,h})  = a^k(\textbf{e}^{k,h}, \textbf{e}^{k,h}).$$
  On the right side of~(\ref{fully-discrete-stokes-orthogonal}), we have
  \begin{equation*}
    \begin{aligned}
      & a^k(\textbf{e}^{k,h}, \textbf{e}^{k,h}) =  \int_{I} \Big\{\left(\textbf{d}_t \textbf{u} - \textbf{d}_t^k\textbf{i}^k\textbf{R}^h\textbf{u}, \textbf{e}^{k,h}\right)_{\Omega}  + 2\boldsymbol{\nu}\left(\dot{\boldsymbol{\epsilon}} (\textbf{u} - \textbf{i}^k\textbf{R}^h\textbf{u}), \nabla \textbf{e}^{k,h}\right)_{\Omega}\\ & \qquad - \left(\textnormal{div} \ \textbf{e}^{k,h}, \textbf{p} - \textbf{i}^k \textbf{I}^h \textbf{p} \right)_{\Omega} + \left( \textnormal{div} (\textbf{u} - \textbf{i}^k \textbf{R}^h \textbf{u}), \boldsymbol{\eta}^{k,h} \right)_{\Omega} \\ & \qquad  +  \left\langle  \sigma_1(\textbf{u}_1 - \textbf{I}^k_1\textbf{R}^h_1\textbf{u}_1, p_1 - I^k_1I^h_1p_1) \cdot \textbf{n}_1 , \textbf{e}_2^{k,h} - \textbf{e}_1^{k,h}\right\rangle_{\Gamma} 
      \\ & \qquad  - \left\langle  \sigma_2 (\textbf{u}_2 - \textbf{I}^k_2\textbf{R}^h_2\textbf{u}_2, p_2 - I^k_2 I^h_2 p_2) \cdot \textbf{n}_2, \textbf{e}_2^{k,h} - \textbf{e}_1^{k,h}\right\rangle_{\Gamma} \\
      & \qquad - \left\langle (\textbf{u}_2 - \textbf{I}^k_2\textbf{R}^h_2\textbf{u}_2) - (\textbf{u}_1 - \textbf{I}^k_1\textbf{R}^h_1\textbf{u}_1),   \sigma_1(\textbf{e}_1^{k,h}, \eta_1^{k,h}) \cdot \textbf{n}_1 \right\rangle_{\Gamma} \hspace*{1.5 cm}
      \\ & \qquad+ \left\langle (\textbf{u}_2 - \textbf{I}^k_2\textbf{R}^h_2\textbf{u}_2) - (\textbf{u}_1 - \textbf{I}^k_1\textbf{R}^h_1\textbf{u}_1),  \sigma_2(\textbf{e}_2^{k,h}, \eta^{k,h}_2) \cdot \textbf{n}_2\right\rangle_{\Gamma} 
      \\ &\qquad + \frac{\gamma}{h}\left\langle (\textbf{u}_2 - \textbf{I}^k_2\textbf{R}^h_2\textbf{u}_2) - (\textbf{u}_1 - \textbf{I}^k_1\textbf{R}^h_1\textbf{u}_1), \textbf{e}_2^{k,h} - \textbf{e}_1^{k,h}\right\rangle_{\Gamma}  \Big\}\diff t.
    \end{aligned}
  \end{equation*}
  Most of these terms were already estimated in previous proofs. We dealt with the time contributions of the time derivative~(\ref{time_equivalence}), the Laplacian terms~(\ref{laplace_time}), and normal derivatives~(\ref{interface_time}) in Theorem~\ref{theorem_semidiscrete_heat}. We looked at the space components in~(\ref{time_derivative_space}) and~(\ref{laplacian_space}) in Theorem~\ref{theorem_discrete_heat}. We considered interface terms in equations~(\ref{heat_interface_estimation}) and~(\ref{heat_interface_estimation_zero}). In~(\ref{heat_interface_estimation}) we have to additionally account for the interpolation in space of the pressure
  \begin{equation*}
    \begin{aligned}
      \int_{I^{n,m}_1} h \big| \big| ( i_1^k p_1 - i_1^k I_1^h p_1) \cdot \textbf{n}_1\big|\big|_{\Gamma}^2 \diff t &\leq  k^{n,m}_1h^{2r} \left|\left|\nabla^r p_1(t^{n,m}_1) \right|\right|_{\Gamma}^2.
    \end{aligned}
  \end{equation*}
  The remaining divergence terms are equal to zero
  \begin{equation*}
    \begin{aligned}
      \int_{I^{n,m}_1}\left(\textnormal{div} \ \textbf{e}^{k,h}_1, p_1 - i_1^k I_1^h p_1\right)_{\Omega_1}\diff t &= 0, \\
      \int_{I^{n,m}_1}\left(\textnormal{div} (\textbf{u}_1 - \textbf{i}_1^k \textbf{R}_1^h \textbf{u}_1), \eta^{k,h}_1 \right)_{\Omega_1}\diff t &= 0.
    \end{aligned}
  \end{equation*}
  Indeed, the exact solution~$\textbf{u}$ and the fully discrete solution~$\textbf{u}^{k,h}$ are divergence-free by definition. The Ritz projection~$\textbf{R}^h \textbf{u}$ is also divergence-free and the time projection operator~$i^k$ does not violate this property, see equation~(\ref{interpolation_div_zero}). That ends the proof.
\end{proof}
Equivalently, we obtained 
\begin{multline*}
  \big| \big| \textbf{e}^{k,h}(t^N)\big|\big| + \int_I \big|\big| \big| \textbf{e}^k\big|\big|\big| \diff t
  \le
  \sum_{j=1}^2 \mathcal{O}\big(k_j\left| \left| \textbf{d}_t \dot{\epsilon} (\textbf{u}_j) \right| \right|_{\Omega_j} \big) + \mathcal{O}\big(k_j \left| \left| d_t p_j \right|\right|_{\Omega_j} \big) \\
  + \mathcal{O}\big(k_j h^{\frac{1}{2}} \left|\left| \textbf{d}_t \sigma_j(\textbf{u}_j, p_j) \cdot \textbf{n}_j  \right|\right|_{\Gamma} \big) \\
  + \mathcal{O} \big(h^{r+1} \left| \left| \textbf{d}_t \nabla^{r + 1} \textbf{u}_j\right| \right|_{\Omega_j} \big)  
  + \mathcal{O}\big(h^r \left| \left|\nabla^{r + 1} \textbf{u}_j \right| \right|_{\Omega_j}\big) +
    \mathcal{O}\big(h^r \left| \left|\nabla^{r} p_j \right| \right|_{\Omega_j}\big).
\end{multline*}
It is another example of an optimal estimate. We again were able to fully decouple time-step dependence.
We will also show a suboptimal estimate of the pressure error.
\begin{theorem}[A priori pressure estimate for the fully discrete Stokes problem]\label{thm:stokes:p}
  Let $\textbf{u} \in X(H^1_0(\Omega))^d$, $\textbf{u}_j \in W^{1, \infty}(H^{r+1}(\Omega_j))^d$, $p_j \in \in W^{1, \infty}(H^{r}(\Omega_j))$ for $j=1,2$ be continuous solutions to~(\ref{continuous_stokes}) and $\textbf{u}^{k,h} \times \textbf{p}^{k,h}\in \big(X^{k,h}(r)\big)^d \times X^{k,h}(r-1)$ their discrete counterparts and solutions to~(\ref{fully_discrete_stokes}), then the following estimate holds
  \begin{align*}
    \int_I  \big|\big|\boldsymbol{\eta}^{k,h} &\big|\big|^2_{\Omega} \diff t 
     \leq C\sum_{j=1}^2 \sum_{n = 1}^{N}\sum_{m = 1}^{N^n_j}  \Bigg\{(k^{n, m}_j)^2 \max_{t \in I} \left| \left| \textbf{d}_t \dot{\epsilon} (\textbf{u}_j) \right| \right|^2_{\Omega_j} + (k^{n,m}_j)^2 \max_{t \in I} \left| \left|d_t p_j\right| \right|_{\Omega_j}^2  \\
    & + (k^{n, m}_j)^2 h \max_{t \in I} \left|\left| \textbf{d}_t \partial _{\textbf{n}_j}\textbf{u}_j \right|\right|_{\Gamma}^2 + (k^{n, m}_j)^2 h \max_{t \in I} \left|\left| d_t p_j\right|\right|_{\Gamma}^2 \\
    &  + h^{2r + 2}  \max_{t \in I} \Big|\Big| \textbf{d}_t \nabla^{r + 1} \textbf{u}_j \Big| \Big|^2_{\Omega_j} + h^{2r} \left| \left|\nabla^{r + 1} \textbf{u}_j(t^{n,m}_j) \right| \right|^2_{\Omega_j} 
    + h^{2r} \left| \left|\nabla^{r} p_j(t^{n,m}_j) \right| \right|^2_{\Omega_j}
    \Bigg\} 
  \end{align*}
  where the errors $\textbf{e}^k = (\textbf{e}_1^k, \textbf{e}_2^k)^T$, $\boldsymbol{\eta}^k = (\eta_1^k, \eta_2^k)^T$ are defined as
  $\textbf{e}_j^{k,h} \coloneqq \textbf{u}_j^{k,h} - \textbf{i}_j^k \textbf{R}_j^h \textbf{u}_j$ and $\eta_j^{k,h} \coloneqq p_j^{k,h} - i_j^k I_j^h p_j$ for $j=1,2$.
  \label{theorem_discrete_stokes_pressure}
\end{theorem}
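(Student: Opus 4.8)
The plan is to recover the pressure error from the momentum residual by exploiting the discrete inf--sup (LBB) stability of the Taylor--Hood pair and then to reuse the velocity bounds already established in Theorem~\ref{thm:stokes:vel}. Since the pressure $\textbf{p}^{k,h}$, and hence the error $\boldsymbol{\eta}^{k,h}$, are piecewise constant in time, I would work interval by interval and treat the time integral as a sum. First I would invoke the inf--sup stability of the Taylor--Hood pair: for the pressure error $\boldsymbol{\eta}^{k,h}$ there is a discrete velocity $\boldsymbol{\varphi}^{k,h}$ with
\begin{equation*}
  -\big(\textnormal{div}\,\boldsymbol{\varphi}^{k,h}, \boldsymbol{\eta}^{k,h}\big)_{\Omega} \geq \beta \big|\big|\boldsymbol{\eta}^{k,h}\big|\big|_{\Omega}^2, \qquad \big|\big|\nabla\boldsymbol{\varphi}^{k,h}\big|\big|_{\Omega} \leq \big|\big|\boldsymbol{\eta}^{k,h}\big|\big|_{\Omega}.
\end{equation*}
Integrating in time, the volume part of $b^k(\boldsymbol{\varphi}^{k,h},\boldsymbol{\eta}^{k,h})$ then controls $\int_I\|\boldsymbol{\eta}^{k,h}\|_\Omega^2$ from below; the interface contributions of $b^k$ are of lower order and, following the reasoning around~(\ref{heat_interface_estimation}) and~(\ref{heat_interface_estimation_zero}), are handled through the hierarchical time-mesh structure together with trace and inverse inequalities, producing the $h$-weighted interface terms $\|d_t p_j\|_\Gamma^2$ and $\|\textbf{d}_t\partial_{\textbf{n}_j}\textbf{u}_j\|_\Gamma^2$.

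Next I would test the momentum equation in the Galerkin orthogonality~(\ref{fully-discrete-stokes-orthogonal}) with this $\boldsymbol{\varphi}^{k,h}$ and $\boldsymbol{\psi}^{k,h}=\textbf{0}$, and subtract the projected pressure, to obtain
\begin{equation*}
  b^k(\boldsymbol{\varphi}^{k,h},\boldsymbol{\eta}^{k,h}) = a(\textbf{u},\boldsymbol{\varphi}^{k,h}) - a^{k,h}(\textbf{u}^{k,h},\boldsymbol{\varphi}^{k,h}) + \Big(b(\boldsymbol{\varphi}^{k,h},\textbf{p}) - b^k(\boldsymbol{\varphi}^{k,h},\textbf{i}^k\textbf{I}^h\textbf{p})\Big).
\end{equation*}
This expresses the pressure error through the momentum residual and the pressure-projection error. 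Splitting $\textbf{u}^{k,h}=\textbf{e}^{k,h}+\textbf{i}^k\textbf{R}^h\textbf{u}$, the viscous contribution $2\boldsymbol{\nu}(\dot{\boldsymbol{\epsilon}}(\cdot),\nabla\boldsymbol{\varphi}^{k,h})$, the normal-stress interface terms, and the pressure-projection error are estimated exactly as in Theorems~\ref{theorem_semidiscrete_heat}, \ref{theorem_discrete_heat} and~\ref{thm:stokes:vel}, invoking Corollaries~\ref{ritz_properties} and~\ref{pressure_properties}. The decisive difference is that the test function is now controlled only by $\|\nabla\boldsymbol{\varphi}^{k,h}\|_\Omega\le\|\boldsymbol{\eta}^{k,h}\|_\Omega$ rather than by $\|\nabla\textbf{e}^{k,h}\|_\Omega$, so each Young splitting absorbs a factor $\int_I\|\boldsymbol{\eta}^{k,h}\|^2$ into the left-hand side and leaves the data terms with one power of $k$ fewer than in the velocity estimate; this is what turns the cubic-in-$k$ bounds of Theorem~\ref{thm:stokes:vel} into the quadratic-in-$k$ bounds claimed here.

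The main obstacle is the time-derivative part of the momentum residual. After using~(\ref{time_equivalence}) to discard the consistency error $\textbf{d}_t\textbf{u}-\textbf{d}_t^k\textbf{i}^k\textbf{u}$ and Corollary~\ref{ritz_properties} for the spatial part, the genuinely new term is $\int_I(\textbf{d}_t^k\textbf{e}^{k,h},\boldsymbol{\varphi}^{k,h})_\Omega\diff t$. I would bound it by Cauchy--Schwarz and Poincar\'e,
\begin{equation*}
  \Big|\int_I(\textbf{d}_t^k\textbf{e}^{k,h},\boldsymbol{\varphi}^{k,h})_\Omega\diff t\Big|
  \leq \Big(\int_I\big|\big|\textbf{d}_t^k\textbf{e}^{k,h}\big|\big|_\Omega^2\diff t\Big)^{\frac{1}{2}}\Big(\int_I\big|\big|\boldsymbol{\varphi}^{k,h}\big|\big|_\Omega^2\diff t\Big)^{\frac{1}{2}},
\end{equation*}
and then use that on each micro interval $\int_{I^{n,m}_j}\|\textbf{d}_t^k\textbf{e}^{k,h}\|_\Omega^2\diff t = (k^{n,m}_j)^{-1}\|\textbf{e}^{k,h}(t^{n,m}_j)-\textbf{e}^{k,h}(t^{n,m-1}_j)\|_\Omega^2$. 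The jump sum $\sum_{n,m}\|\textbf{e}^{k,h}(t^{n,m}_j)-\textbf{e}^{k,h}(t^{n,m-1}_j)\|_\Omega^2$ is precisely the quantity controlled by the velocity estimate through the identity~(\ref{time_identity}), so the extra factor $(k^{n,m}_j)^{-1}$ reduces each cubic squared-data term to a quadratic one, i.e. costs half an order in the pressure rate; this is exactly the source of the suboptimality. Collecting the volume bound from the inf--sup step with these residual estimates, absorbing the $\int_I\|\boldsymbol{\eta}^{k,h}\|^2$ contributions on the left-hand side, and summing over $j$ and over all time intervals then yields the stated estimate.
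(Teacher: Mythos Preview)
Your proposal is correct and follows essentially the same route as the paper: invoke the (generalized) inf--sup stability of the Taylor--Hood pair to bound $\|\boldsymbol{\eta}^{k,h}\|$ by the momentum residual tested against a controlled $\boldsymbol{\varphi}^{k,h}$, rewrite this residual via Galerkin orthogonality as $-a^k(\textbf{e}^{k,h},\boldsymbol{\varphi}^{k,h})+a^k(\textbf{u}-\textbf{i}^k\textbf{R}^h\textbf{u},\boldsymbol{\varphi}^{k,h})+b(\boldsymbol{\varphi}^{k,h},\textbf{p}-\textbf{i}^k\textbf{I}^h\textbf{p})$, reuse the interpolation and interface estimates from Theorems~\ref{theorem_semidiscrete_heat}--\ref{thm:stokes:vel} and Corollaries~\ref{ritz_properties}--\ref{pressure_properties}, and isolate the time-derivative term $\int_I(\textbf{d}_t^k\textbf{e}^{k,h},\boldsymbol{\varphi}^{k,h})_\Omega$ as the source of the half-order loss. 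The only cosmetic differences are that the paper uses the supremum form of the inf--sup condition from~\cite{DG} rather than constructing an explicit Fortin test function, and that it bounds the time-derivative contribution by first passing to the $L^1$-in-time estimate $\int_{I_j^{n,m}}\|\textbf{d}_t^k\textbf{e}^{k,h}\|=\|\textbf{e}^{k,h}(t_j^{n,m})-\textbf{e}^{k,h}(t_j^{n,m-1})\|$, squaring the resulting inequality, and then invoking~(\ref{time_identity}) to reach~(\ref{pressure_estimate}); your direct Cauchy--Schwarz in $L^2(I)$ produces the same $(k_j^{n,m})^{-1}$ factor in front of the jump term and hence the same suboptimal rate.
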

\begin{proof}
  We would like to obtain an estimate of the form 
  \begin{equation}
    c \int_{I_1^{n,m}}k_1^{n,m}\left| \left| \boldsymbol{\eta}^{k,h}_1\right| \right|_{\Omega}^2 \diff t\leq \int_{I_1^{n,m}} (\textbf{d}_t^k \textbf{e}^{k,h}_1, \textbf{e}^{k,h}_1) \diff t +\int_{I_1^{n,m}} \left|\left| \left|  \textbf{e}^{k,h}_1\right|\right| \right|_{\Omega}^2 \diff t
    \label{pressure_estimate}
  \end{equation}
  on each $I_1^{n,m}$ for $\textbf{e}_1^{k,h}$ as well as an analogous set of estimates for $\textbf{e}_2^{k,h}$. We will be then able to use Theorem~\ref{theorem_discrete_stokes} on the right side of this identity. To achieve this goal, we need to use the inf-sup stability of our trial space. We are going to use a generalized version of the inf-sup stability condition proved in \cite{DG}, from which follows that there exists a constant $\beta$ such that for every $\textbf{q}^{k,h} \in X^{k,h}(r - 1) $, we have 
  \begin{equation*}
    \begin{aligned}
      & \int_{I} ||\textbf{q}^{k,h}||_{\Omega} \diff t \leq \beta \sup_{\boldsymbol{\varphi}^{k,h} \in V^{k,h}}\frac{b(\boldsymbol{\varphi}^{k,h}, \textbf{q}^{k,h})}{|||\boldsymbol{\varphi}^{k,h}|||_{\Omega}} , 
    \end{aligned}
  \end{equation*}
  where $V^{k,h} \coloneqq \left(X^{k,h}(r) \right)^d$. In particular, we can claim that
  \begin{equation*}
    \begin{aligned}
      & \int_{I} ||\boldsymbol{\eta}^{k,h}||_{\Omega} \diff t \leq \beta \sup_{\boldsymbol{\varphi}^{k,h} \in V^{k,h}} \frac{b(\boldsymbol{\varphi}^{k,h}, \boldsymbol{\eta}^{k,h})}{|||\boldsymbol{\varphi}^{k,h}|||_{\Omega}}.
    \end{aligned}
  \end{equation*}
  We can again use the Galerkin orthogonality
  \begin{equation*}
    \begin{aligned}
      b(\boldsymbol{\varphi}^{k,h}, \boldsymbol{\eta}^{k,h}) =&  - a^k(\textbf{e}^{k,h}, \boldsymbol{\varphi}^{k,h}) 
      + a^k(\textbf{u} - \textbf{i}^k \textbf{R}^h \textbf{u}, \boldsymbol{\varphi}^{k,h}) + b(\boldsymbol{\varphi}^{k,h}, \textbf{p} - \textbf{i}^k \textbf{I}^h \textbf{p}).
    \end{aligned}
  \end{equation*}
  We start with an estimation of the first term
  \begin{multline}
      \sup_{\boldsymbol{\varphi}^{k,h} \in V^{k,h}} \frac{a^k(\textbf{e}^{k,h}, \boldsymbol{\varphi}^{k,h})}{|||\boldsymbol{\varphi}^{k,h}|||_{\Omega}} \leq  c \int_{I} \Big\{ \left| \left| \textbf{d}_t^k \textbf{e}^{k,h} \right|\right|_{\Omega}+ \boldsymbol{\nu} \left| \left| \nabla \textbf{e}^{k,h} \right|\right|_{\Omega} 
       + \nu_1h^{\frac{1}{2}}\left| \left| \partial_{\textbf{n}_1} \textbf{e}^{k,h}_1 \right|\right|_{\Gamma} + \\
       \nu_2h^{\frac{1}{2}}\left| \left| \partial_{\textbf{n}_2} \textbf{e}^{k,h}_2 \right|\right|_{\Gamma} + h^{-\frac{1}{2}}(1 + \gamma) \left| \left| \textbf{e}^{k,h}_2 - \textbf{e}^{k,h}_1 \right|\right|_{\Gamma}  \Big\} \diff t \\
        \leq  c \int_{I} \Big\{ \left| \left| \textbf{d}_t^k \textbf{e}^{k,h} \right|\right|_{\Omega}+\left| \left|\left| \textbf{e}^{k,h} \right|\right|\right|_{\Omega} \Big\} \diff t. 
    \label{inf-sup_error}
  \end{multline}
  The time derivative is equal to 
  \begin{equation*}
    \begin{aligned}
      \int_{I_1^{n,m}} \left| \left| \textbf{d}_t^k \textbf{e}^{k,h}_1 \right|\right|_{\Omega_1} \diff t = \left| \left| \textbf{e}^{k,h}_1(t_1^{n,m}) - \textbf{e}^{k,h}_1(t^{n,m - 1}_1)\right|\right|_{\Omega_1}.
    \end{aligned}
  \end{equation*}
  That leads to an estimation 
  \begin{equation*}
    \begin{aligned}
      &  \int_{I_1^{n,m}} ||\boldsymbol{\eta}^{k,h}||_{\Omega_1} \diff t \leq \left| \left| \textbf{e}^{k,h}_1(t_1^{n,m}) - \textbf{e}^{k,h}_1(t^{n,m - 1}_1)\right|\right|_{\Omega_1} + \int_{I_1^{n,m}}\left| \left| \left| \textbf{e}^{k,h}_1 \right|\right|\right|_{\Omega_1} \diff t.
    \end{aligned}
  \end{equation*}
  Knowing that all of these functions are piecewise constant in time, the inequality is equivalent to
  \begin{equation*}
        k_1^{n,m} ||\boldsymbol{\eta}^{k,h}(t_1^{n,m})||_{\Omega_1} \leq \left| \left| \textbf{e}^{k,h}_1(t_1^{n,m}) - \textbf{e}^{k,h}_1(t^{n,m - 1}_1)\right|\right|_{\Omega_1} + k_1^{n,m}\left| \left| \left| \textbf{e}^{k,h}_1(t_1^{n,m}) \right|\right|\right|_{\Omega_1} .
  \end{equation*}
  By squaring both sides of the inequality and going back to the integral form, we get
  \begin{equation*}
     \int_{I_1^{n,m}} k_1^{n,m} ||\boldsymbol{\eta}^{k,h}||_{\Omega_1}^2 \diff t \leq 2\left(\left| \left| \textbf{e}^{k,h}_1(t_1^{n,m}) - \textbf{e}^{k,h}_1(t^{n,m - 1}_1)\right|\right|_{\Omega_1}^2 + \int_{I_1^{n,m}} k_1^{n,m} \left| \left| \left| \textbf{e}^{k,h}_1 \right|\right|\right|_{\Omega_1}^2 \diff t\right).
  \end{equation*}
  Then, based on~(\ref{time_identity}), we have 
  \begin{flalign*}
    \left| \left| \textbf{e}^{k,h}_1(t_1^{n,m}) - \textbf{e}^{k,h}_1(t^{n,m-1}_1)\right|\right|_{\Omega}^2 \leq \int_{I_{1}^{n,m}} 2 (\textbf{d}_t^k \textbf{e}^{k,h}, \textbf{e}^{k,h})_{\Omega_1}.
  \end{flalign*}
  Assuming that $k_1^{n,m} \leq 1$, we have
  \begin{equation*}
    \int_{I_1^{n,m}} k_1^{n,m} \left| \left| \left| \textbf{e}^{k,h}_1 \right|\right|\right|_{\Omega_1}^2 \diff t \leq \int_{I_1^{n,m}} \left| \left| \left| \textbf{e}^{k,h}_1 \right|\right|\right|_{\Omega_1}^2 \diff t.
  \end{equation*}
  This way, we acquire the estimate (\ref{pressure_estimate}).
  We continue with the remaining terms
  \begin{multline*}
     \sup_{\boldsymbol{\varphi}^{k,h} \in V^{k,h}} \frac{ a^k(\textbf{u} - \textbf{i}^k \textbf{R}^h \textbf{u}, \boldsymbol{\varphi}^{k,h})}{|||\boldsymbol{\varphi}^{k,h}|||_{\Omega}} \leq c\int_I \Big\{ \left| \left| \textbf{d}_t^k( \textbf{u} - \textbf{i}^k \textbf{R}^h \textbf{u}) \right|\right|_{\Omega} 
      + \boldsymbol{\nu}\left| \left| \dot{\boldsymbol{\epsilon}}( \textbf{u} - \textbf{i}^k \textbf{R}^h \textbf{u}) \right|\right|_{\Omega} \\ 
       \qquad + h^{\frac{1}{2}}\Big|\Big|\nu_1 \partial_{\textbf{n}_1} (\textbf{u}_1 - \textbf{i}^k_1\textbf{R}^h_1\textbf{u}_1) - \nu_2 \partial_{\textbf{n}_2} (\textbf{u}_2 - \textbf{i}^k_2\textbf{R}^h_2\textbf{u}_2)\Big|\Big|_{\Gamma} \Big\} \diff t.
   \end{multline*}
  All of these terms we estimated in the previous proofs. For a detailed recollection, we refer to the previous proof. The last term can be estimated using
  \begin{equation*}
    \begin{aligned}
      &\sup_{\boldsymbol{\varphi}^{k,h} \in V^{k,h}} \frac{ b(\boldsymbol{\varphi}^{k,h}, \textbf{p} - \textbf{i}^k \textbf{I}^h \textbf{p})}{|||\boldsymbol{\varphi}^{k,h}|||_{\Omega}} \leq c\int_I \Big\{ \left| \left|  \textbf{p} - \textbf{i}^k \textbf{I}^h \textbf{p} \right|\right|_{\Omega} 
      \\ & \qquad \qquad  + h^{\frac{1}{2}}\big| \big| ( p_1 - i_1^k I_1^h p_1)\textbf{n}_1 \big|\big|_{\Gamma} +  h^{\frac{1}{2}}\big| \big| ( p_2 - i_2^k I_2^h p_2) \textbf{n}_2\big|\big|_{\Gamma}\Big\} \diff t.
    \end{aligned}
  \end{equation*}
  
  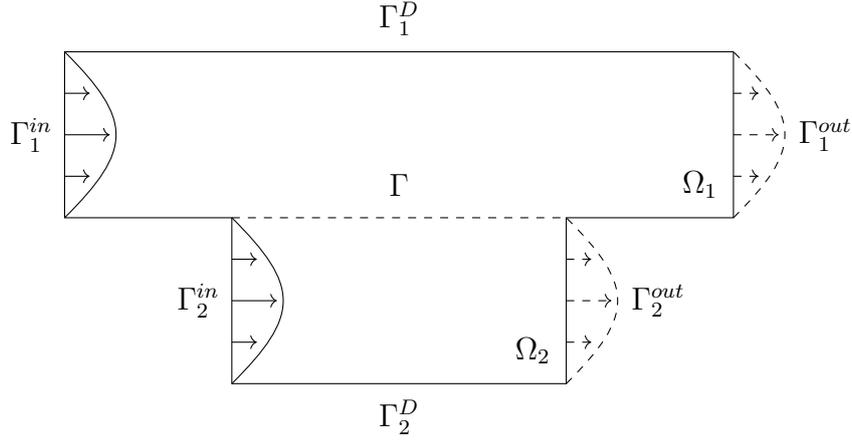
\begin{figure}[t]
    \centering
    \begin{tikzpicture}[scale = 2.2]
      
      \draw (1.0, 0.0) -- (0.0, 0.0) -- (0.0, 1.0) -- (4.0, 1.0) -- (4.0, 0.0) -- (3.0, 0.0) -- (3.0, -1.0) -- (1.0, -1.0) -- (1.0, 0.0);
      \draw[dashed] (1.0, 0.0) -- (3.0, 0.0);
      \draw (0,0) .. controls (0.41, 0.41)and (0.41, 0.59) .. (0,1);
      \draw[dashed] (4,0) .. controls (4.41, 0.41)and (4.41, 0.59) .. (4,1);
      \draw (1,-1) .. controls (1.41, -0.59) and (1.41, -0.41) .. (1,0);
      \draw[dashed] (3,-1) .. controls (3.41, -0.59) and (3.41, -0.41) .. (3,0);
      \draw[->] (0.0, 0.5) -- (0.27, 0.5);
      \draw[->] (0.0, 0.25) -- (0.15, 0.25);
      \draw[->] (0.0, 0.75) -- (0.15, 0.75);

      \draw[->] (1.0, -0.5) -- (1.27, -0.5);
      \draw[->] (1.0, -0.75) -- (1.15, -0.75);
      \draw[->] (1.0, -0.25) -- (1.15, -0.25);

      \draw[dashed, ->] (3.0, -0.5) -- (3.27, -0.5);
      \draw[dashed, ->] (3.0, -0.75) -- (3.15, -0.75);
      \draw[dashed, ->] (3.0, -0.25) -- (3.15, -0.25);

      \draw[dashed, ->] (4.0, 0.5) -- (4.27, 0.5);
      \draw[dashed, ->] (4.0, 0.25) -- (4.15, 0.25);
      \draw[dashed, ->] (4.0, 0.75) -- (4.15, 0.75);
      
      \node at (3.8, 0.2) {\large{$\Omega_1$}};
      \node at (2.8, -0.8) {\large{$\Omega_2$}};
      \node at (-0.2, 0.5) {\large{$\Gamma_1^{in}$}};
      \node at (0.8, -0.5) {\large{$\Gamma_2^{in}$}};
      \node at (2.0, 1.2) {\large{$\Gamma_1^D$}};
      \node at (2.0, -1.2) {\large{$\Gamma_2^D$}};
      \node at (4.55, 0.5) {\large{$\Gamma_1^{out}$}};
      \node at (3.55, -0.5) {\large{$\Gamma_2^{out}$}};
      \node at (2.0, 0.2) {\large{$\Gamma$}};
      
    \end{tikzpicture}
    \caption{We show a sketch of the domains for the Stokes example. The interface is denoted by $\Gamma$. We prescribe parabolic inflows on the inlets $\Gamma_j^{in}$ and free Neumann conditions on the outlets $\Gamma_j^{out}$ for $j=1,2$. Otherwise, we take no-slip boundary conditions on $\Gamma_j^D$.}
    \label{stokes_domain}
  \end{figure}

  For the interface terms, we have 
  \begin{equation*}
      \int_{I^{n,m}_1} \big| \big| ( p_1 - i_1^k I_1^h p_1) \textbf{n}_1 \big|\big|_{\Gamma}^2 \diff t \leq  (k^{n,m}_1)^3 \max_{t \in I} \left|\left| d_t p_1 \right|\right|_{\Gamma}^2  + k^{n,m}_1h^{2r-1} \left|\left|\nabla^r p_1(t^{n,m}_1) \right|\right|_{\Gamma}^2.
  \end{equation*}
  Combining together all of the steps ends the proof.
\end{proof}
We showed a suboptimal estimate of the form
\begin{multline*}
    \int_I  \big|\big|\boldsymbol{\eta}^{k,h} \big|\big|_{\Omega} \diff t 
    = \sum_{j=1}^2 \mathcal{O}\big(k_j^{\frac{1}{2}}\left| \left| \textbf{d}_t \dot{\epsilon} (\textbf{u}_j) \right| \right|_{\Omega_j}\big) + \mathcal{O}\big(k_j^{\frac{1}{2}} \left| \left| d_t p_j \right|\right|_{\Omega_j} \big)   
    + \mathcal{O}\big(k_j^{\frac{1}{2}}h^{\frac{1}{2}}\left|\left|\textbf{d}_t \partial_{\textbf{n}_j}\textbf{u}_j\right|\right|_{\Gamma} \big) \\
    +\mathcal{O}\big(k_j^{\frac{1}{2}}h^{\frac{1}{2}} \left|\left|d_t p_j\right|\right|_{\Gamma}\big)\\
     +  \mathcal{O}\big( k_j^{-\frac{1}{2}}h^{r + 1}  \big|\big| \textbf{d}_t \nabla^{r + 1} \textbf{u}_j \big| \big|_{\Omega_j}\big) + \mathcal{O}\big(k_j^{-\frac{1}{2}}h^r \left| \left|\nabla^{r + 1} \textbf{u}_j \right| \right|_{\Omega_j} \big)  
    + \mathcal{O}\big(k_j^{-\frac{1}{2}}h^r \left| \left|\nabla^{r} p_j \right| \right|_{\Omega_j}\big)
\end{multline*}
Due to the inf-sup estimation~(\ref{inf-sup_error}), we lost half an order of convergence in time. Namely, the source of this loss is the time derivative. We obtained the term $||\textbf{d}_t^k \textbf{e}^{k,h}||^2_{\Omega} = (\textbf{d}_t^k \textbf{e}^{k,h}, \textbf{d}_t^k \textbf{e}^{k,h})_{\Omega}$, whereas, on the left side of the orthogonality identity~(\ref{fully-discrete-stokes-orthogonal}), we have $(\textbf{d}_t^k \textbf{e}^{k,h}, \textbf{e}^{k,h})_{\Omega}$. We were only able to show the estimate 
\begin{equation*}
  k (\textbf{d}_t^k \textbf{e}^{k,h}, \textbf{d}_t^k \textbf{e}^{k,h})_{\Omega} \leq (\textbf{d}_t^k \textbf{e}^{k,h}, \textbf{e}^{k,h})_{\Omega}.
\end{equation*}
This is a nontrivial problem and was encountered for example in the series of articles~\cite{NS_analysis_1, NS_analysis_2, NS_analysis_3, NS_analysis_4} about the Navier-Stokes equations. Specifically, in~\cite{NS_analysis_4} the authors comment on the difficulties that come with the optimal estimation of the time derivative term. This issue has been successfully circumvented in~\cite{pressure_optimal}, where the optimal convergence rate of pressure was retrieved. Indeed, the authors were able to show optimality for the Crank-Nicolson time-stepping scheme in $L^2$ 
\begin{equation}
  \left| \left| I^k p - p^k \right| \right|_{L^2\left(I, H^1(\Omega) \right)} \leq C k^2
  \label{pressure_L2}
\end{equation}
and $L^\infty$ 
\begin{equation}
  \left| \left| J^k p - p^k \right| \right|_{L^{\infty}\left(I, H^1(\Omega) \right)} \leq C k^2
  \label{pressure_infinity}
\end{equation}
norms. The operator $J^k$ is given by 
\begin{equation*}
  J^k p \big|_{I^n} \coloneqq u(\bar{t}_{n}),
\end{equation*}
where $\bar{t}_{n} \coloneqq \frac{t_{n} + t_{n - 1}}{2}$. This publication considered neither coupled problems nor multirate time-stepping.

%The rest directly follows from the previous proof simply by replacing $\boldsymbol{\lambda}$ by $\boldsymbol{\nu}$ and using appropriate Neumann coupling condition on the interface.
%\hspace*{11.8 cm} $\square$

%-------------------------------------------------------------------------

\subsection{Numerical Example}

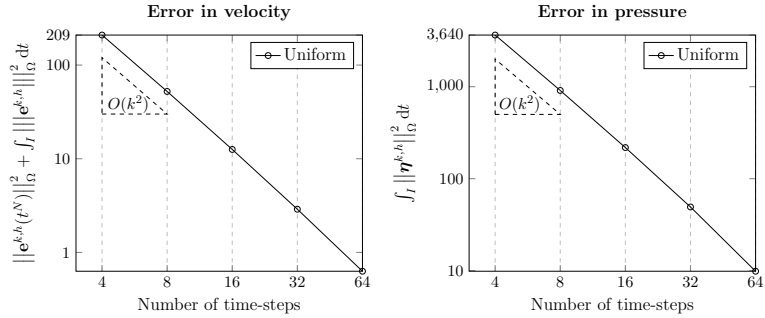
\begin{figure}[t]
  \begin{center}
    \begin{tikzpicture}[scale = 0.55]
      \begin{loglogaxis}[
	  title style={align=center},
	  title={\large{\textcolor{white}{adjust to the next line}} \\ \large{\textbf{Error in velocity}}},
	  ylabel={\large{$\big| \big| \textbf{e}^{k,h}(t^N)\big|\big|^2_{\Omega}   + \int_I \big|\big|\big|\textbf{e}^{k,h} \big|\big|\big|^2_{\Omega} \diff t$}},
	  xlabel={\large{Number of time-steps}},
          xmax=64,
	  ymin=0.6293964200027503, 
	  ymax=209.12388873964287,
          log ticks with fixed point,
          xtick={4, 8, 16, 32, 64},
          ytick={1, 10, 100, 209},
          %y tick label style={/pgf/number format/10 sep=\,},
	  legend pos=north east,
	  xmajorgrids=true,
	  grid style=dashed,
	]

	\addplot[
	  color=black,
	  mark=o,
	]
	coordinates {
	  (4.0, 209.12388873964287)
          (8.0, 52.276864680267536)
          (16.0, 12.56609938782985)
          (32.0, 2.8908205774794182)
          (64.0, 0.6293964200027503)
          
	};
        
        \draw[dashed] (axis cs:4,30) -- (axis cs:8,30) -- (axis cs:4,120) -- (axis cs:4,30);
	\node at (axis cs:5.3,38) {$O(k^2)$};
	\legend{\large{Uniform}}
	
      \end{loglogaxis}
    \end{tikzpicture}
    \begin{tikzpicture}[scale = 0.55]
      \begin{loglogaxis}[
	  title style={align=center},
	  title={\large{\textcolor{white}{adjust to the next line}} \\ \large{\textbf{Error in pressure}}},
	  ylabel={\large{$\int_I \big| \big| \boldsymbol{\eta}^{k,h}\big|\big|^2_{\Omega}\diff t$}},
	  xlabel={\large{Number of time-steps}},
          xmax=64,
	  ymin=9.858885309738726, 
	  ymax=3644.0548987697075,
          log ticks with fixed point,
          xtick={4, 8, 16, 32, 64},
          ytick={10, 100, 1000, 3644},
          %y tick label style={/pgf/number format/10 sep=\,},
	  legend pos=north east,
	  xmajorgrids=true,
	  grid style=dashed,
	]

	\addplot[
	  color=black,
	  mark=o,
	]
	coordinates {
	  (4.0, 3644.0548987697075)
          (8.0, 910.1062745605058)
          (16.0, 217.92295497690716)
          (32.0, 49.2770263304117)
          (64.0, 9.858885309738726)

	};

        \draw[dashed] (axis cs:4,500) -- (axis cs:8,500) -- (axis cs:4,2000) -- (axis cs:4,500);
	\node at (axis cs:5.2,650) {$O(k^2)$};	
	\legend{\large{Uniform}}
	
      \end{loglogaxis}
    \end{tikzpicture}
  \end{center}
  \caption{Coupled Stokes problem on uniform time meshes. We show the squared error in velocity (left) and pressure (right). For velocity and pressure we obtain second order of convergence (in the squared error). While this is optimal for the velocity, it shows that the pressure estimate is suboptimal.}
  \label{fig:global}
\end{figure}

We consider a coupled Stokes problem on a domain consisting of two pipelines  $\Omega_1=(0,4)\times (0,1)$ and $\Omega_2=(1,3)\times (-1,0)$ connected by the interface $\Gamma = (1,3)\times \{0\}$. The viscosities are taken as $\nu_1 = 1$ and $\nu_2 = 56$. This ratio is similar to the viscosity ratio of water an oil and hence, we will call $\Omega_1$ the ``water problem'' and $\Omega_2$ the ``oil problem''.
On each of the inlets $\Gamma^{in}_j$ we prescribe parabolic inflows
$\textbf{u}_1^{in}(x,y) = \sin(\pi t) y (1 - y)$ and $\textbf{u}_2^{in}(x,y) = \sin(\pi t) y (1 + y)$ for $t \in I=[0,1]$. On the outlets $\Omega^{out}_j$, we choose free Neumann boundary conditions. Otherwise, we take no-slip Dirichlet boundary conditions. We show a sketch of the domain in Figure~\ref{stokes_domain}. Since the flow is fully driven by the boundary conditions, we take $\textbf{f}_1 = \textbf{f}_2 = \textbf{0}$.

In Fig.~\ref{fig:global} we show the velocity and pressure error on uniformly refined time meshes with the same number of time-steps in both subproblems. The spatial mesh is kept fixed at high resolution. Both velocity and pressure converge with linear order, which is optimal for the velocity, see Theorem~\ref{thm:stokes:vel}. For the pressure Theorem~\ref{thm:stokes:p} only showed a suboptimal convergence and we refer to the discussion at the end of the previous section. 

%%%%%%%%%%%%%%%%%%%%%%%%%%%%%%%%%%%%%%%%%%%%%%%%%% local

Next we analyze the exactness of the multirate error estimates and start with very coarse time meshes with only 4 time-steps in both of the domains. Then we refine only one of the two domains $\Omega_1$ or $\Omega_2$. Fig.~\ref{velocity_convergence_rate} shows the convergence rate in the velocity error. In the upper row we only refine the first domain $\Omega_1$ while the lower row shows time-refinement in $\Omega_2$ only. The left column indicates the total velocity error spanning over both domains while the right column gives the error only for that domain, where time-mesh refinement takes place. The triangles again indicate quadratic convergence (of the squared errors). We first observe that the variational multirate method is well localizing the errors and that refinement in only one domain is indeed sufficient to reduce the error in that domain only. As expected the overall error is dominated by the ``oil problem'' and the total error will only decrease, if the time mesh corresponding to $\Omega_2$ is refined. The results in Fig.~\ref{velocity_convergence_rate} further show that the error estimates are well able to localize the error to the two domains which validates our findings in Theorem~\ref{theorem_discrete_stokes} where we were able to fully decouple time contributions from different subproblems.

\begin{figure}[t]

  \begin{center}
    \begin{tabular}{cc}
      \begin{tikzpicture}[scale = 0.55]
        \begin{semilogxaxis}[
	    title style={align=center},
	    title={\large{\textbf{Multirate time-stepping}}\\\large{\textbf{in the water problem}}},
	    ylabel={\large{$\big| \big| \textbf{e}^{k,h}(t^N)\big|\big|^2_{\Omega}   + \int_I \big|\big|\big|\textbf{e}^{k,h} \big|\big|\big|^2_{\Omega} \diff t$}},
	    xlabel={\large{Number of time-steps in the water problem}},
            xmax=64,
	    ymin=204, 
	    ymax=214,
            log ticks with fixed point,
            xtick={4, 8, 16, 32, 64},
            ytick={1, 10, 100, 209, 300},
            %y tick label style={/pgf/number format/10 sep=\,},
	    legend pos=north east,
	    xmajorgrids=true,
	    grid style=dashed,
	  ]
	  
	  \addplot[
	    color=black,
	    mark=o,
	  ]
	  coordinates {
            (4.0, 209.12388873964287)
            (8.0, 209.0224192380493)
            (16.0, 208.99610399013505)
            (32.0, 208.9893469162537)
            (64.0, 208.98762123917425)
	  };

          \draw[dashed] (axis cs:4,210.8) -- (axis cs:8,210.8) -- (axis cs:4,213.25) -- (axis cs:4,210.8);
	  
	  \legend{\large{Asymmetric}}
	  
        \end{semilogxaxis}
      \end{tikzpicture}
      &
      \begin{tikzpicture}[scale = 0.55]
        \begin{loglogaxis}[
	    title style={align=center},
	    title={\large{\textbf{Multirate time-stepping}}\\\large{\textbf{in the water problem}}},
	    ylabel={\large{$\int_I \nu_1^2 \big|\big| \nabla \textbf{e}^{k,h}_1 
                \big|\big|^2_{\Omega_1} \diff t$}},
	    xlabel={\large{Number of time-steps in the water problem}},
            xmax=64,
	    ymin=0.00035664009997386385, 
	    ymax=0.13160419459484587,
            log ticks with fixed point,
            xtick={4, 8, 16, 32, 64},
            ytick={0.001, 0.01, 0.13},
            %y tick label style={/pgf/number format/10 sep=\,},
	    legend pos=north east,
	    xmajorgrids=true,
	    grid style=dashed,
	  ]

	  \addplot[
	    color=black,
	    mark=o,
	  ]
	  coordinates {
            (4.0, 0.13160419459484587)
            (8.0, 0.032847327230317885)
            (16.0, 0.007856348069003946)
            (32.0, 0.0017729822387376145)
            (64.0, 0.0003537318395834676)
	  };

          \draw[dashed] (axis cs:4, 0.0185) -- (axis cs:8,0.0185) -- (axis cs:4,0.074) -- (axis cs:4, 0.0185);
	  
	  \legend{\large{Asymmetric}}
	  
        \end{loglogaxis}
        %\vspace*{10 cm}
      \end{tikzpicture}
      \\
      %\vspace*{0.015 cm}
      \begin{tikzpicture}[scale = 0.55]
        \begin{loglogaxis}[
	    title style={align=center},
	    title={\large{\textbf{Multirate time-stepping}}\\\large{\textbf{in the oil problem}}},
	    ylabel={\large{$\big| \big| \textbf{e}^{k,h}(t^N)\big|\big|^2_{\Omega}   + \int_I \big|\big|\big|\textbf{e}^{k,h} \big|\big|\big|^2_{\Omega} \diff t$}},
	    xlabel={\large{Number of time-steps in the oil problem}},
            xmax=64,
	    ymin=0.7517021764173385, 
	    ymax=209.12388873964287,
            log ticks with fixed point,
            xtick={4, 8, 16, 32, 64},
            ytick={1, 10, 100, 209},
            %y tick label style={/pgf/number format/10 sep=\,},
	    legend pos=north east,
	    xmajorgrids=true,
	    grid style=dashed,
	  ]
	  
	  \addplot[
	    color=black,
	    mark=o,
	  ]
	  coordinates {
            (4.0, 209.12388873964287)
            (8.0, 52.37001928638283)
            (16.0, 12.682143249731393)
            (32.0, 3.0120885756425473)
            (64.0, 0.7517021764173385)
	  };

          \draw[dashed] (axis cs:4,30) -- (axis cs:8,30) -- (axis cs:4,120) -- (axis cs:4,30);
	  
	  \legend{\large{Asymmetric}}
	  
        \end{loglogaxis}
      \end{tikzpicture}
      &
      %\vspace*{0.015 cm}
      \begin{tikzpicture}[scale = 0.55]
        \begin{loglogaxis}[
	    title style={align=center},
	    title={\large{\textbf{Multirate time-stepping}}\\\large{\textbf{in the oil problem}}},
	    ylabel={\large{$\int_I \nu_2^2\big|\big| \nabla \textbf{e}^{k,h}_2 
                \big|\big|^2_{\Omega_2} \diff t$}},
	    xlabel={\large{Number of time-steps in the oil problem}},
            xmax=64,
	    ymin=0.5654988185936245, 
	    ymax=208.93749622522043,
            log ticks with fixed point,
            xtick={4, 8, 16, 32, 64},
            ytick={1, 10, 100, 209},
            %y tick label style={/pgf/number format/10 sep=\,},
	    legend pos=north east,
	    xmajorgrids=true,
	    grid style=dashed,
	  ]
	  
	  \addplot[
	    color=black,
	    mark=o,
	  ]
	  coordinates {
            (4.0, 208.93749622522043)
            (8.0, 52.18372470408981)
            (16.0, 12.495900542639403)
            (32.0, 2.825872091184297)
            (64.0, 0.5654988185936245)
	  };

          \draw[dashed] (axis cs:4,30) -- (axis cs:8,30) -- (axis cs:4,120) -- (axis cs:4,30);
	  
	  \legend{\large{Asymmetric}}
	  
        \end{loglogaxis}
      \end{tikzpicture}
    \end{tabular}
  \end{center}
  \caption{Convergence rate of the total velocity error on a uniform mesh (top), total and $H^1$ water errors for refinement in the water mesh only (middle row), and total and $H^1$ oil errors for refinement in the oil mesh only (bottom row) with respect to the number of time-steps.}
  \label{velocity_convergence_rate}
\end{figure}
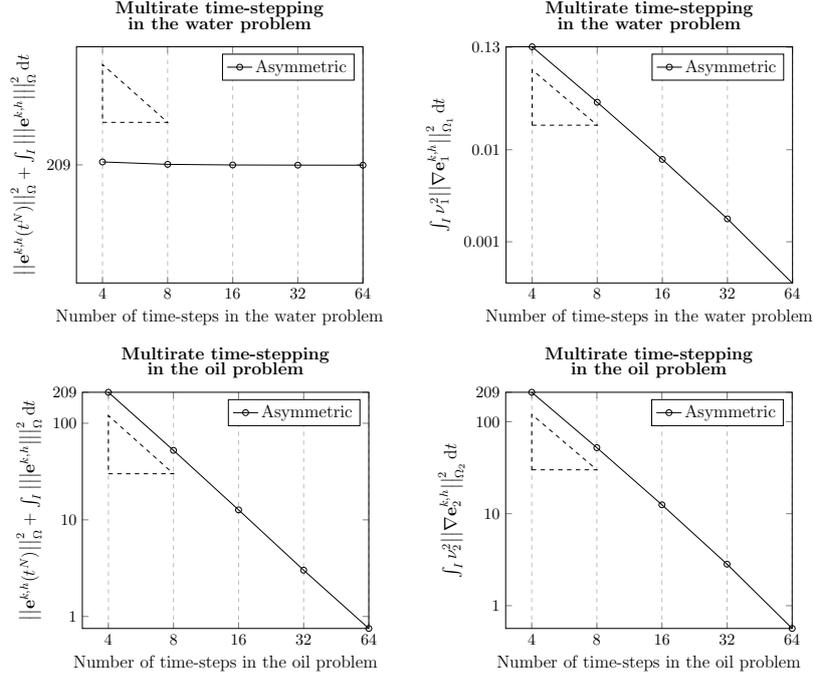

Finally, in Figure~\ref{pressure_convergence_rate} we present convergence graphs corresponding to the pressure. These results show that the two subproblems are not fully decoupled. Indeed, especially in the case of the water problem, we can see a deterioration of the convergence rate in the case of asymmetric time meshes. The graphs suggest that this deterioration is driven by the number of micro time-steps. This effect is much more pronounced in the water problem. That might be due to the difference in viscosity between the two problems.

\begin{figure}[t]

  \begin{center}
    \begin{tabular}{cc}
      %\vspace*{0.015 cm}
      \begin{tikzpicture}[scale = 0.55]
        \begin{loglogaxis}[
	    title style={align=center},
	    title={\large{\textbf{Multirate time-stepping}}\\\large{\textbf{in the water problem}}},
	    ylabel={\large{$\int_I \big| \big| \boldsymbol{\eta}^{k,h}\big|\big|^2_{\Omega}\diff t$}},
	    xlabel={\large{Number of time-steps in the water problem}},
            xmax=64,
	    ymin= 49.334723083002245, 
	    ymax=50000,
            log ticks with fixed point,
            xtick={4, 8, 16, 32, 64},
            ytick={10, 100, 1000, 3644},
            %y tick label style={/pgf/number format/10 sep=\,},
	    legend pos=north east,
	    xmajorgrids=true,
	    grid style=dashed,
	  ]
	  
	  \addplot[
	    color=black,
	    mark=o,
	  ]
	  coordinates {
 	    (4.0, 3644.0548987697075)
            (8.0, 3648.1688058287123)
            (16.0, 3650.785870274405)
            (32.0, 3652.242192273331)
            (64.0, 3653.007901430062)
	  };

 	  \addplot[
	    color=black,
	    mark=square,
	  ]
	  coordinates {
  	    %(4.0, 3644.0548987697075)
            (8.0, 3648.1688058287123)
            (16.0, 911.1294387441708)
            (32.0, 218.1707017554819)
            (64.0, 49.334723083002245)
	  };

          \draw[dashed] (axis cs:4,5500) -- (axis cs:8,5500) -- (axis cs:4,29000) -- (axis cs:4,5500);
	  
	  \legend{\large{Asymmetric}, \large{Uniform}}
	  
        \end{loglogaxis}
      \end{tikzpicture}
      &
      \begin{tikzpicture}[scale = 0.55]
        \begin{loglogaxis}[
	    title style={align=center},
	    title={\large{\textbf{Multirate time-stepping}}\\\large{\textbf{in the water problem}}},
	    ylabel={\large{$\int_I \big| \big|       \eta^{k,h}_1\big|\big|^2_{\Omega_1}\diff t$}},
	    xlabel={\large{Number of time-steps in the water problem}},
            xmax=64,
	    ymin=0.010300577097754874, 
	    ymax=3.4855402362208423,
            log ticks with fixed point,
            xtick={4, 8, 16, 32, 64},
            ytick={0.1, 1.0, 3.49},
            %y tick label style={/pgf/number format/10 sep=\,},
	    legend pos=north east,
	    xmajorgrids=true,
	    grid style=dashed,
	  ]
          
	  \addplot[
	    color=black,
	    mark=o,
	  ]
	  coordinates {
            (4.0, 3.4855402362208423)
            (8.0, 0.9317804151139482)
            (16.0, 0.2566173357430428)
            (32.0, 0.07705017036833921)
            (64.0, 0.027352254959012605)
	  };
          
	  \addplot[
	    color=black,
	    mark=square,
	  ]
	  coordinates {
            %(4.0, 3.4855402362208423)
            (8.0, 0.9317804151139482)
            (16.0, 0.22502265697789084)
            (32.0, 0.05115428460041787)
            (64.0, 0.010300577097754874)

	  };

          \draw[dashed] (axis cs:4,0.525) -- (axis cs:8,0.525) -- (axis cs:4,2.1) -- (axis cs:4,0.525);
	  
	  \legend{\large{Asymmetric}, \large{Uniform}}
	  
        \end{loglogaxis}
        %\vspace*{10 cm}
      \end{tikzpicture}
      \\
      \begin{tikzpicture}[scale = 0.55]
        \begin{loglogaxis}
          [
     	    title style={align=center},
            title={\large{\textbf{Multirate time-stepping}}\\\large{\textbf{in the oil problem}}},
    	    ylabel={\large{$\int_I \big| \big| \boldsymbol{\eta}^{k,h}\big|\big|^2_{\Omega}\diff t$}},
    	    xlabel={\large{Number of time-steps in the oil problem}},
            xmax=64,
    	    ymin=9.855477597969582, 
    	    ymax=3644.0548987697075,
            log ticks with fixed point,
                %log basis x=2,
                %/pgfplots/log identify minor tick positions=true,
                %   /pgfplots/hide obscured y ticks=false,
                xtick={4, 8, 16, 32, 64},
                ytick={10, 100, 1000, 3644},
                %y tick label style={/pgf/number format/10 sep=\,},
    	    legend pos=north east,
    	    xmajorgrids=true,
    	    grid style=dashed,
    	  ]
  	  \addplot[
  	    color=black,
  	    mark=o,
  	  ]
  	  coordinates {
    	    (4.0, 3644.0548987697075)
              (8.0, 909.6881902771019)
              (16.0, 218.96404929998275)
              (32.0, 51.4984253444735)
              (64.0, 12.786633646096856)
  	  };
  
   	  \addplot[
  	    color=black,
  	    mark=square,
  	  ]
  	  coordinates {
    	    %(4.0, 3644.0548987697075)
              (8.0, 909.6881902771019)
              (16.0, 217.78945094666994)
              (32.0, 49.247943946063835)
              (64.0, 9.855477597969582)
  	  };
  
            \draw[dashed] (axis cs:4,500) -- (axis cs:8,500) -- (axis cs:4,2000) -- (axis cs:4,500);
  	  
  	  \legend{\large{Asymmetric}, \large{Uniform}}
          \end{loglogaxis}
      \end{tikzpicture}
      &
      \begin{tikzpicture}[scale = 0.55]
        \begin{loglogaxis}[
	    title style={align=center},
	    title={\large{\textbf{Multirate time-stepping}}\\\large{\textbf{in the oil problem}}},
	    ylabel={\large{$\int_I \big| \big| \eta^{k,h}_2\big|\big|^2_{\Omega_2}\diff t$}},
	    xlabel={\large{Number of time-steps in the oil problem}},
            xmax=64,
	    ymin=9.301639791454074, 
	    ymax=3640.5693585334866,
            log ticks with fixed point,
            ytick={10, 100, 1000, 3640},
            xtick={4, 8, 16, 32, 64},
            %y tick label style={/pgf/number format/10 sep=\,},
	    legend pos=north east,
	    xmajorgrids=true,
	    grid style=dashed,
	  ]
	  
	  \addplot[
	    color=black,
	    mark=o,
	  ]
	  coordinates {
            (4.0, 3640.5693585334866)
            (8.0, 906.2029615220773)
            (16.0, 215.4789605943768)
            (32.0, 48.013400960696586)
            (64.0, 9.301639791454074)
	  };

 	  \addplot[
	    color=black,
	    mark=square,
	  ]
	  coordinates {
            %(4.0, 3640.5693585334866)
            (8.0, 906.2029615220773)
            (16.0, 216.91598003447692)
            (32.0, 49.03823462355506)
            (64.0, 9.807960898924534)
	  };

          \draw[dashed] (axis cs:4,500) -- (axis cs:8,500) -- (axis cs:4,2000) -- (axis cs:4,500);
	  
	  \legend{\large{Asymmetric}, \large{Uniform}}
	  
        \end{loglogaxis}
      \end{tikzpicture}
    \end{tabular}
  \end{center}
  \caption{Convergence rate of the total pressure error on a uniform mesh (top), total and water errors for refinement in the water mesh (middle row), and total and oil errors for refinement in the oil mesh (bottom row) with respect to the number of time-steps.}
  
  \label{pressure_convergence_rate}
\end{figure}
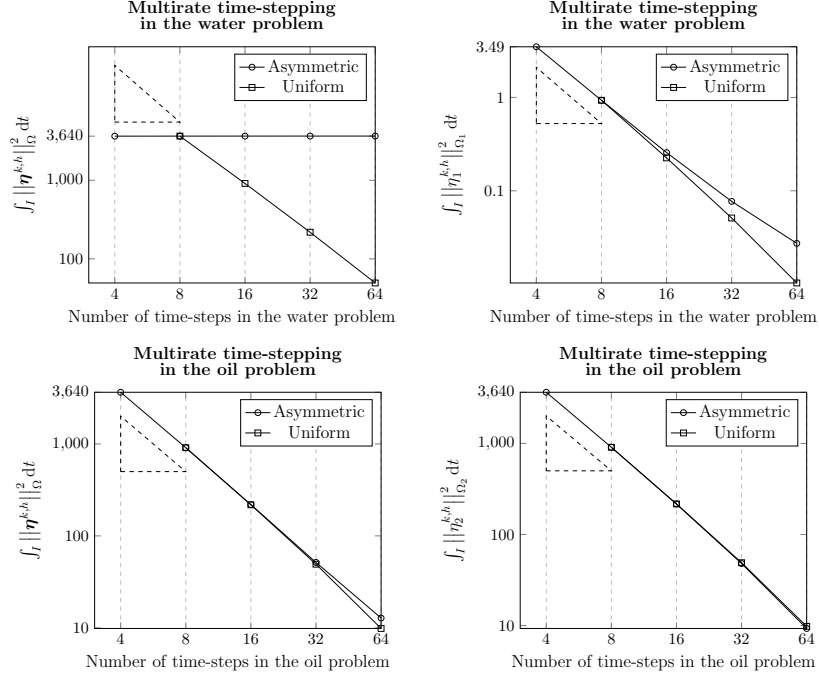

%{\color{red}Should we add at least one numerical example? } 
%------------------------------------------------------------------------
\section{Conclusion}

 In this paper, we proved stability error estimates of the implicit Euler time-stepping scheme for coupled systems with non-matching time interval partitionings. We considered three problems - a simple system of ordinary differential equations as well as two systems of partial differential equations, that is either two heat or two Stokes equations coupled together. We examined both semi-discrete as well as fully discrete formulations. The proofs showed optimal convergence rates for all of them except for the pressure error of the fully discrete Stokes problem. The key components of the proofs were using appropriate projection operators in time as well as choosing coupling conditions that ensured coercivity of the problems. In the case of the fully discrete Stokes equation, we used a  generalized inf-sup condition to account for the coupling conditions. 

A natural extension of our findings would be to consider other time-stepping schemes as well as systems consisting of equations of a different type, for example, a heat equation coupled with a wave equation or a coupling of a Stokes and linear elasticity equations. As a further step, we could also consider nonlinear systems.  
%-----------------------------------------------------------------------

\section{Acknowledgments}
Both authors acknowledge support by the Deutsche Forschungsgemeinschaft (DFG, German Research Foundation) - 314838170, GRK 2297 MathCoRe. TR further acknowledge supported by the Federal Ministry of Education and Research of Germany (project number 05M16NMA).

%\bibliographystyle{plain}
%\bibliography{literature.bib}

\end{document}